\setlist[enumerate]{leftmargin=.7cm,label=\roman*)}
\newtheorem{theorem}{Theorem}[section]
\newtheorem{theoremA}{Theorem}
\newtheorem{corA}[theoremA]{Corollary}
\newtheorem{lemma}[theorem]{Lemma}
\newtheorem{prop}[theorem]{Proposition}
\newtheorem{cor}[theorem]{Corollary}
\theoremstyle{definition}
\newtheorem*{remark*}{Remark}
\newtheorem{defn}[theorem]{Definition}
\newtheorem{rem}[theorem]{Remark}
\DeclareMathOperator{\id}{id}
\DeclareMathOperator{\tr}{tr}
\DeclareMathOperator{\THH}{THH}
\DeclareMathOperator{\THR}{THR}
\DeclareMathOperator{\TCR}{TCR}
\DeclareMathOperator{\TC}{TC}
\DeclareMathOperator{\TRR}{TRR}
\DeclareMathOperator{\TR}{TR}
\DeclareMathOperator{\GW}{GW}
\DeclareMathOperator{\GL}{GL}
\DeclareMathOperator{\res}{res}
\DeclareMathOperator{\KR}{KR}
\DeclareMathOperator{\K}{K}
\DeclareMathOperator{\Z}{\mathbb{Z}}
\DeclareMathOperator{\F}{\mathbb{F}}
\DeclareMathOperator{\sd}{sd}
\DeclareMathOperator{\tran}{tran}
\DeclareMathOperator{\Lt}{L}
\DeclareMathOperator{\Mod}{Mod}
\DeclareMathOperator{\EM}{H}
\DeclareMathOperator{\M}{M}
\DeclareMathOperator{\W}{W}
\begin{document}
\begin{center}\LARGE{An analogue of the Milnor conjecture for the de Rham-Witt complex in characteristic 2}
\end{center}

\begin{center}\Large{Emanuele Dotto}
\end{center}

\let\thefootnote\relax\footnotetext{\emph{2020 Mathematics Subject Classification:} Primary 19D55, 11E81, 13F35; Secondary 55P91, 14F30, 19D45}

\vspace{.05cm}

\abstract{
We describe the modulo $2$ de Rham-Witt complex of a field of characteristic $2$, in terms of the powers of the augmentation ideal of the $\Z/2$-geometric fixed points of real topological restriction homology $\TRR$. This is analogous to the conjecture of Milnor, proved in \cite{Kato} for fields of characteristic $2$, which describes the  modulo $2$ Milnor K-theory in terms of the powers of the augmentation ideal of the Witt group of symmetric forms. Our proof provides a somewhat explicit description of these objects, as well as a calculation of the homotopy groups of the geometric fixed points of $\TRR$ and of real topological cyclic homology, for all fields.
}

\vspace{.05cm}

\section*{Introduction}

Let $k$ be a field. Let us recall that Milnor conjectured, in \cite{Milnor}, \cite{Milnor1}, that a certain canonical map of graded rings
\[
\K^M_\ast(k)/2\longrightarrow I^{\ast}/I^{\ast +1},
\]
should be an isomorphism.
Here $\K^M_\ast(k)$ is the Milnor K-theory of $k$, and $I:=\ker(rk\colon \W^s(k)\to\Z/2)$ is the augmentation ideal of the Witt group $\W^s(k)$ of symmetric forms over $k$. This conjecture was proved in \cite{Kato} when $k$ has characteristic $2$, and subsequently in \cite{OVV, VoeMilnor, Morel} in characteristic different from $2$.

The starting point of our paper is the following, somewhat overloaded observation. On one side of this isomorphism, we have a ``symbolic version'' $\K^M_\ast(k)$ of the algebraic K-theory spectrum $\K(k)$ of $k$. On the other side, we have the Witt group $\W^s(k)$, which is $\pi_0$ of the $\Z/2$-geometric fixed-points spectrum of a certain canonical $\Z/2$-equivariant refinement $\KR(k)$ of $\K(k)$ (see \cite{9II,9III}). One may then wonder if a similar relationship holds for other functors closely related to algebraic K-theory.
One does not need to look far for another such example, which is already provided in Kato's proof of Milnor's conjecture in characteristic $2$: The de Rham complex $\Omega^\ast_k$ is a ``symbolic version'' of the topological Hochschild homology spectrum $\THH(k)$, and $\THH(k)$ also admits a canonical $\Z/2$-equivariant refinement $\THR(k)$. The calculation of \cite[Corollary 5.2]{THRmodels} provides an isomorphism between $\pi_0$ of the $\Z/2$-geometric fixed points of this spectrum and $(k\otimes_S k)/2$, where $S\leq k$ is the subfield generated by the squares. The result analogous to Milnor's conjecture is then an isomorphism
\[
\Omega^\ast_k/2\stackrel{\cong}{\longrightarrow}J^{\ast}/J^{\ast +1},
\]
where $J$ is the kernel of the multiplication map $\mu\colon (k\otimes_S k)/2\to k/2$. Let us point out that, if $2$ is a unit in $k$, the source and target of this map are clearly zero, so that this statement has content only when the characteristic of $k$ is $2$.
It seems to be a standard result that this map is an isomorphism, and it plays an important role in the proof of \cite[Lemma 7(3)]{Kato} (see \cite{Arason} for a proof, which we recast in Lemma \ref{lemma:trunc0}). 
The main goal of our paper is to establish an analogous result for topological restriction homology, whose ``symbolic version'' is the de Rham-Witt complex of Bloch, Deligne and Illusie \cite{Ill}. 

Let $\W_{\langle2^\bullet\rangle}\Omega^\ast_k$ be the $2$-typical de Rham-Witt complex of $k$. We will take the definition of \cite{Costeanu}, as the initial object in the category of $2$-typical Witt complexes over $k$ (see also \cite{IbLarsDeRhamMixed}). 
For all integer $n\geq 0$, let $\TR^{n+1}(k;2)$ be the $2$-typical $(n+1)$-truncated  topological restriction homology of \cite{BHM} (see also \cite{AN}). Similarly to the relation between Milnor K-theory and algebraic K-theory, $\W_{\langle2^\bullet\rangle}\Omega^\ast_k$ and the homotopy groups of $\TR^{n+1}(k;2)$ agree in low degrees, and the former should be consider the symbolic version of the latter (see \cite{HdRWZp} and \cite[\S4]{GeisserHesselholt}). We recall that the spectrum $\TR^{n+1}(k;2)$ is defined as the $C_{2^n}$-fixed points of a $C_{2^n}$-equivariant structure on $\THH(k)$, where $C_{2^n}$ is the cyclic group of order $2^n$. 
This admits a $\Z/2$-equivariant refinement  $\TRR^{n+1}(k;2)$, constructed by extending the $C_{2^n}$-equivariant structure on $\THH(k)$ to an equivariant spectrum $\THR(k)$ for the dihedral group $D_{2^n}$ of order $2^{n+1}$. The fixed-point spectrum
\[
\TRR^{n+1}(k;2):=\THR(k)^{C_{2^n}}
\]
then inherits the structure of a $\Z/2$-spectrum, since $\Z/2$ is the Weyl group of $C_{2^n}$ in $D_{2^n}$. This construction is carried out in \cite{Amalie} and \cite[\S1]{geomTCR}, and we review it in \S\ref{prelim}.
There is then a canonical ring homomorphism
\[
\res^{D_{2^n}}_{C_{2^n}}\colon \pi_0\TRR^{n+1}(k;2)^{\phi \Z/2}\longrightarrow \pi_0\TR^{n+1}(k;2)/2,
\]
where $(-)^{\phi \Z/2}$ denotes the geometric fixed-points functor, and we let $J_{\langle 2^n \rangle}$ be its kernel.
There are operators between these spectra
\[
\xymatrix{
\TRR^{n+1}(k;2)^{\phi \Z/2}
\ar@<1ex>[r]^-{R}
\ar@<-1ex>[r]_-F
&
 \TRR^{n}(k;2)^{\phi \Z/2}
 \ar[l]|-V
}
\]
which correspond to the usual respective maps $R$, $V$ and $F$ on $\TR^{n+1}(k;2)$ under the restriction map above. There is also a map 
\[\sigma\colon \TRR^{n+1}(k;2)^{\phi \Z/2}\longrightarrow \TRR^{n+1}(k;2)^{\phi \Z/2}\]
of order $2$, which is induced by the action of the Weyl group of $\Z/2$ in the quotient $D_{2^{n+1}}/C_{2^n}$. It is easy to see that all these maps restrict to maps between the kernels $J_{\langle 2^n \rangle}$. The main result of the paper is the following analogue of Milnor's conjecture.

\begin{theoremA}\label{thmintro:dRW}
Let $k$ be a field of characteristic $2$.
The maps $R,F,V$ and $d:=1+\sigma$ endow the sequence $J_{\langle 2^\bullet \rangle}^\ast/J_{\langle 2^\bullet \rangle}^{\ast+1}$ with the structure of a $2$-typical Witt complex over $k$, and the unique map of $2$-typical Witt complexes over $k$
\[
(\W_{\langle2^\bullet\rangle}\Omega^\ast_k)/2\longrightarrow J_{\langle 2^\bullet \rangle}^\ast/J_{\langle 2^\bullet \rangle}^{\ast+1}
\]
is a strict isomorphism.
\end{theoremA}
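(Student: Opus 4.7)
The plan is to split the proof into three steps. First, show that $\bigoplus_q J_{\langle 2^\bullet\rangle}^q/J_{\langle 2^\bullet\rangle}^{q+1}$ inherits a 2-typical Witt complex structure from the operators $R, F, V$ and the involution $\sigma$ on $\pi_0\TRR^{n+1}(k;2)^{\phi\Z/2}$, which then produces the canonical map of Witt complexes by universality of $\W_{\langle 2^\bullet\rangle}\Omega^\ast_k$. Second, handle the base case $n=0$, which reduces to the Arason-type isomorphism $\Omega^\ast_k/2 \cong J^\ast/J^{\ast+1}$ recast in Lemma \ref{lemma:trunc0}, using the identification $\pi_0\THR(k)^{\phi\Z/2} \cong (k\otimes_S k)/2$ from \cite{THRmodels}. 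Third, perform an induction on $n$ comparing natural short exact sequences on both sides.

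For the Witt complex structure on the associated graded, the ring multiplication is inherited from the ring structure on $\pi_0\TRR^{n+1}(k;2)^{\phi\Z/2}$, in which $J_{\langle 2^n\rangle}$ is an ideal. The Frobenius and Verschiebung identities $FV=2$, $V(x\cdot Fy)=(Vx)\cdot y$, and the Teichm\"uller relation $Fd[a]=[a]d[a]$ at the prime $2$ all follow from spectrum-level relations for real TR established in \S\ref{prelim}, transferred through the restriction map. The most delicate point is the derivation property of $d=1+\sigma$: although $\sigma$ is a ring involution globally, the identity $\sigma(xy)=\sigma(x)\sigma(y)$ combined with $(1+\sigma)(xy)=(1+\sigma)(x)\cdot y + \sigma(x)\cdot(1+\sigma)(y)$ yields the Leibniz rule modulo one higher step of the $J$-filtration, since $\sigma$ preserves $J_{\langle 2^n\rangle}$ and acts as the identity on the augmentation quotient. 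The same parity of $\sigma^2=1$ forces $d^2$ to land in one higher filtration degree, giving $d^2=0$ on the associated graded.

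To prove the map is a strict isomorphism, I would proceed by induction on $n$ using the restriction $R$. The 2-typical de Rham--Witt complex fits in Illusie's standard short exact sequence expressing $\W_{\langle 2^n\rangle}\Omega^\ast_k$ in terms of $\W_{\langle 2^{n-1}\rangle}\Omega^\ast_k$ and a top-degree piece built from $\Omega^\ast_k$ and its Frobenius. A parallel sequence on the $J$-adic side should come from the isotropy-separation cofiber sequence for the subgroup $C_2\leq C_{2^n}$ inside $D_{2^n}$, after passing to geometric $\Z/2$-fixed points; this relates $\TRR^{n+1}(k;2)^{\phi\Z/2}$ to $\TRR^{n}(k;2)^{\phi\Z/2}$ up to a term that one identifies with a suitable twist of $\Omega^\ast_k/2$ via the base case. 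Comparing both sequences via the five lemma reduces the inductive step to matching the top pieces.

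The main obstacle is the explicit control of $\pi_0\TRR^{n+1}(k;2)^{\phi\Z/2}$ and of its $J$-adic filtration, precise enough to identify the associated graded with $\W_n\Omega^\ast_k/2$ term by term. This is exactly the computation of the homotopy groups of the geometric fixed points of $\TRR$ promised in the abstract, and it requires a careful analysis of the interaction between $C_{2^n}$-fixed points and geometric $\Z/2$-fixed points within the dihedral-equivariant structure on $\THR(k)$. Once surjectivity of the comparison map is established by exhibiting Teichm\"uller generators $[a]$ and their derivatives $d[a_1]\cdots d[a_q]$ spanning each graded piece, injectivity follows by matching against the known structure of $\W_n\Omega^q_k/2$ as a $k$-module, using the base case to initialize the dimension count.
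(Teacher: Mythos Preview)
Your three-step outline matches the paper's architecture closely: the Witt-complex verification (your Step 1) is essentially Proposition~\ref{prop:WittComplex}, the base case (Step 2) is Lemma~\ref{lemma:trunc0}, and the induction on $n$ via the restriction $R$ (Step 3) is the skeleton of the proof of Theorem~\ref{thm:dRW}. Your treatment of the Leibniz rule for $d=1+\sigma$ modulo one filtration step is exactly right, and your observation that $FV=1+\sigma$ becomes $0$ on the associated graded is the key point there.

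The genuine gap is in the injectivity argument of the inductive step. The exact sequence you want on the de Rham--Witt side,
\[
\Omega^q_k\oplus\Omega^{q-1}_k\xrightarrow{V^n+dV^n}(\W_{\langle 2^n\rangle}\Omega^q_k)/2\xrightarrow{R}(\W_{\langle 2^{n-1}\rangle}\Omega^q_k)/2\to 0,
\]
is only right-exact: the left map has a kernel, and on the $J$-side you have no independent description of the corresponding kernel. So neither the five lemma nor a dimension count is available --- there is no separate computation of $\dim_k J_{\langle 2^n\rangle}^q/J_{\langle 2^n\rangle}^{q+1}$ to match against, and these are not $k$-vector spaces in any evident way for $n\geq 1$. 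What the paper actually does is take $c=V^n(a)+dV^n(b)$ with $u_n(c)\in J_{\langle 2^n\rangle}^{q+1}$, write $u_n(c)$ explicitly as a pair $(z,w)$ in the pullback description of Theorem~\ref{thm:geomTRn}, and then prove two technical facts: first (Lemma~\ref{lemma:key2}), that an element of the form $(z,0)$ lying in $J_{\langle 2^n\rangle}^{q+1}$ forces $z\in J_{\langle 1\rangle}^{q+2}$, a filtration jump of one extra step; second (Lemma~\ref{lemma:key1}), that a certain element of $J_{\langle 1\rangle}^{q+1}$ built from a $2$-basis of $k$ forces the corresponding de Rham--Witt class $V(\sum\mu_\nu(dx)^\nu)$ to be divisible by $2$. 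These two lemmas, proved by explicit manipulations in the basis $\{\Delta(x)^\xi\}$ of $k\otimes_S k$, are the technical heart of the argument and are not visible from the isotropy-separation or dimension-counting perspective you sketch. Without them the inductive step does not close.
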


Let us remark on some special cases of this theorem:

\begin{enumerate}
\item For $\ast=0$, the isomorphism of Theorem \ref{thmintro:dRW} identifies with the modulo $2$ reduction of the isomorphism $\W_{\langle2^n\rangle}(k)\cong \pi_0\TR^{n+1}(k;2)$ of \cite[Theorem F]{WittVect}, where $\W_{\langle2^n\rangle}(k)$ is the ring of $(n+1)$-truncated $2$-typical Witt vectors of $k$.
\item For $\bullet=0$, the isomorphism of Theorem \ref{thmintro:dRW} is the isomorphism $\Omega^\ast_k\cong J^\ast/J^{\ast+1}$ discussed above.
\item If $k$ has characteristic different from $2$, then both $(\W_{\langle2^\bullet\rangle}\Omega^\ast_k)/2$ and $\TRR^{n+1}(k;2)^{\phi \Z/2}$ vanish, so Theorem \ref{thmintro:dRW} in fact holds in all characteristics.
\item If $k$ is perfect of characteristic $2$, then $(\W_{\langle2^\bullet\rangle}\Omega^\ast_k)/2=0$ for $\ast>0$, and 
\[(\W_{\langle2^\bullet\rangle}\Omega^0_k)/2=\W_{\langle2^\bullet\rangle}(k)/2\cong k.\]
Similarly, in the case of perfect fields, $\pi_0\TRR^{n+1}(k;2)^{\phi\Z/2}\cong k$ and $J_{\langle 2^\bullet \rangle}=0$ (see \cite[Theorem 4.7]{geomTCR}). Thus, Theorem \ref{thmintro:dRW} has non-trivial content only for non-perfect fields of characteristic $2$.
\end{enumerate}

We prove the theorem by first explicitly calculating the homotopy groups of $\TRR^{n+1}(k;2)^{\phi\Z/2}$ in \S\ref{sec:geomTRR} (even though we really only need $\pi_0$), extending the calculations for perfect fields of \cite[\S4.2]{geomTCR}. We then use our calculation to provide generators for $\pi_0\TRR^{n+1}(k;2)^{\phi\Z/2}$ and $J_{\langle 2^\bullet \rangle}$, analogous to the canonical generators $V^{n-i}\tau_i(a)$ of the Witt vectors (see Propositions \ref{prop:genTRR} and \ref{prop:additivegen}). 
This allows us to define a Witt-complex structure on $J_{\langle 2^\bullet \rangle}^\ast/J_{\langle 2^\bullet \rangle}^{\ast+1}$, and to prove Theorem \ref{thmintro:dRW} by induction on $\bullet$, using the exact sequences of \cite[Lemma 3.5]{Costeanu}, in \S\ref{sec:dRW}.

The description of the homotopy groups of $\TRR^{n+1}(k;2)^{\phi\Z/2}$ is in Theorem \ref{thm:geomTRn}, and it is proved using the  pullbacks of \cite[Theorem 2.7]{geomTCR}. It is somewhat technical, and we won't state it here, but it is completely explicit. There is however a closely related calculation which is more straightforward to state. Let $\TCR(k;2)$ be the $2$-typical real topological cyclic homology spectrum of $k$, which we may define as the equaliser
\[
\TCR(k;2):=eq\big(\xymatrix{\TRR(k;2)\ar@<.5ex>[r]^-\id\ar@<-.5ex>[r]_-F&\TRR(k;2)}\big),
\]
where $\TRR(k;2)$ is the limit of $\TRR^{n+1}(k;2)$ over the maps $R$. Let us point out that, by \cite[Theorem A]{geomTCR}, if $2$ is a unit in $k$, then $\TCR(k;p)^{\phi\Z/2}=0$ for every prime $p$, so that we may assume that  $k$ has characteristic $2$.
Let $C_2$ act on $k\otimes_Sk$ by swapping the two tensor factors, where $S\leq k$ is the subfield of squares. Let us denote by $w$ the generator of $C_2$. The following is proved in Corollary \ref{cor:geomTCRk}.

\begin{theoremA}\label{thmintro:TCR}
Let $k$ be a field of characteristic $2$.
For every integer $l\geq 0$, there is an exact sequence
\[
0\to\pi_{2l} \TCR(k;2)^{\phi\Z/2}\to (k\otimes_S k)^{C_2}\xrightarrow{\pi-\phi} (k\otimes_S k)^{C_2}/Im(1+w)\to\pi_{2l-1} \TCR(k;2)^{\phi\Z/2}\to 0,
\]
where $\pi$ is the quotient map, and $\phi$ is the ring homomorphism defined by $\phi(a\otimes b)=ba^2\otimes b$.
\end{theoremA}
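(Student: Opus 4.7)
My plan is to extract the claimed four-term exact sequence from the long exact sequence in homotopy associated to the fibre sequence
\[
\TCR(k;2)^{\phi\Z/2}\longrightarrow \TRR(k;2)^{\phi\Z/2}\xrightarrow{1-F}\TRR(k;2)^{\phi\Z/2},
\]
obtained by applying the exact functor $(-)^{\phi\Z/2}$ to the equaliser defining $\TCR(k;2)$. The problem then reduces to computing $\pi_\ast\TRR(k;2)^{\phi\Z/2}$ together with the induced action of $1-F$, and reading off the kernel and cokernel.

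The homotopy groups of $\TRR(k;2)^{\phi\Z/2}$ would be computed by feeding Theorem \ref{thm:geomTRn}, which gives an explicit description of $\pi_\ast \TRR^{n+1}(k;2)^{\phi\Z/2}$ at each level, into the Milnor $\lim$/$\lim^1$ sequence for the restriction tower. Starting from $\pi_0\THR(k)^{\phi\Z/2}=k\otimes_Sk$ of \cite{THRmodels} and the Weyl $\Z/2$-action on $\TRR^{n+1}(k;2)=\THR(k)^{C_{2^n}}$ swapping the tensor factors, I expect the limit to stabilise into a $2$-periodic pattern in which the even part contributes $(k\otimes_Sk)^{C_2}$ and the odd part contributes the Tate-like quotient $(k\otimes_Sk)^{C_2}/\mathrm{Im}(1+w)$. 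Vanishing of $\lim^1$ along the $R$-tower, which should follow from the explicit generators of Proposition \ref{prop:genTRR} and the compatibilities of $R$, $F$, $V$ listed in the introduction, is needed to make this passage to the limit strict.

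Once the homotopy groups are known, I would identify the cyclotomic Frobenius $F$ algebraically. Using the explicit model of $\THR(k)$ of \cite{THRmodels}, the Frobenius on $\pi_0$ of the geometric fixed points should square the tensor factor corresponding to the cyclic subgroup of $D_{2^n}$, while the dihedral generator records the swap, giving precisely the ring map $\phi(a\otimes b)=ba^2\otimes b$. The induced map $1-F$ in the long exact sequence then becomes the map $\pi-\phi$ of the statement, with $\pi$ coming from the quotient connecting the even and odd descriptions in the $2$-periodic pattern, and the four-term sequence follows by extracting the kernel-cokernel fragment. The main obstacle I anticipate is the algebraic identification of the cyclotomic Frobenius with $\phi$ on $\pi_0$ of $\THR(k)^{\phi\Z/2}$, which requires a careful compatibility check between the cyclotomic structure on $\THR(k)$ and the model of \cite{THRmodels}, together with its interaction with the Weyl $\Z/2$-action; beyond this, the argument is bookkeeping with Theorem \ref{thm:geomTRn} and the long exact sequence.
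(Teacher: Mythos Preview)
Your approach diverges from the paper's and contains a genuine gap. The paper does \emph{not} pass through the $\TRR$ tower at all; instead it invokes \cite[Theorem A]{geomTCR}, recalled in \S\ref{prelim}, which gives directly
\[
\TCR(k;2)^{\phi\Z/2}\simeq eq\big(\,r,f\colon (\THR(k)^{\phi\Z/2})^{C_2}\rightrightarrows \THR(k)^{\phi\Z/2}\,\big),
\]
where $r$ comes from the cyclotomic structure and $f$ is the forgetful map. The proof of Corollary~\ref{cor:geomTCRk} is then a short read-off: Proposition~\ref{prop:lowerFR} computes $r$ and $f$ on homotopy, observes that $r-f$ is an isomorphism on all off-diagonal summands, and identifies the remaining diagonal piece of $r-f$ with $\iota-\phi^{-1}\pi$, hence with $\pi-\phi$ after rescaling by the isomorphism of Lemma~\ref{Lemma:Frobeniuslift}. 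No limit over the $R$-tower, and no $\lim^1$ analysis, is needed.

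Your proposed route through $\TRR(k;2)^{\phi\Z/2}$ also rests on two incorrect expectations. First, by Theorem~\ref{thm:geomTRn} the map $R$ annihilates every off-diagonal summand, so the \emph{odd} homotopy of $\TRR(k;2)^{\phi\Z/2}$ vanishes; it is not the Tate-type quotient you predict. The even part is an inverse limit of the pullbacks $\phi^{l-1}((k\otimes_S k)^{C_2})\times_{k\otimes_S k}\phi^{l-1}((k\otimes_S k)^{C_2})$ along $(\phi^{-1}\pi,\phi^{-1}\pi)$, which is not simply $(k\otimes_S k)^{C_2}$. Second, and more seriously, the map $\phi$ in the statement does \emph{not} arise from the Frobenius $F$ on $\TRR$: by Theorem~\ref{thm:geomTRn}, $F$ on the pullback sends $(x,y)$ to $(x,x)$ and involves no squaring whatsoever. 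The appearance of $\phi$ is entirely due to the cyclotomic map $r$ (Proposition~\ref{prop:lowerFR}), which lives in the equaliser description above, not in the $1-F$ sequence you wrote down. So even if you succeeded in computing $\pi_\ast\TRR(k;2)^{\phi\Z/2}$, identifying $1-F$ with $\pi-\phi$ would require essentially reproving the equivalence of \cite[Theorem A]{geomTCR}.
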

The map $\phi$ in fact determines an isomorphism $\phi\colon k\otimes_S k\to (k\otimes_S k)^{C_2}/Im(1+w)$, and is in a sense a replacement of the Frobenius of $k$ when $k$ is not perfect (see Lemma \ref{Lemma:Frobeniuslift}). It plays a crucial role in the calculations of \S\ref{sec:geomTRR} and in the description of $J_{\langle 2^\bullet \rangle}$.

In \cite[Theorem (1)]{Kato}, Kato exhibits a closely related exact sequence, involving the symmetric and quadratic Witt groups $\W^s(k)$ and $\W^q(k)$. Combined with Theorem \ref{thmintro:TCR}, it gives isomorphisms
\[\pi_{2l} \TCR(k;2)^{\phi\Z/2}\cong \W^s(k) \ \ \ \ \ \ \mbox{and} \ \ \ \ \ \   \pi_{2l-1} \TCR(k;2)^{\phi\Z/2}\cong \W^q(k) \]
for every $l\geq 0$. In fact, this identifies the homotopy groups of $\TCR(k;2)^{\phi\Z/2}$ with the genuine normal L-groups of $k$, as conjectured by Nikolaus, proved in great generality in \cite{HNS}, and verified in \cite{geomTCR} in the case of perfect fields (see Remark \ref{rem:L}). 
%
%
%
%
%

From Theorem \ref{thmintro:TCR}, we can also deduce a version of the Milnor conjecture for $\TC$. Let us choose the respective equaliser and coequaliser
\[
\xymatrix{\nu^\ast_{dRW/2}(k;2)\ar[r]&(\W_{\langle 2^{\infty}\rangle}\Omega^\ast_k)/2\ar@<.5ex>[r]^{\id}\ar@<-.5ex>[r]_{F}&(\W_{\langle 2^{\infty}\rangle}\Omega^\ast_k)/2\ar[r]&\epsilon^\ast_{dRW/2}(k;2)
}
\]
as possible symbolic versions of topological cyclic homology modulo $2$, where  $\W_{\langle 2^{\infty}\rangle}\Omega^\ast_k$ is the limit over the map $R$ of $\W_{\langle 2^{\bullet}\rangle}\Omega^\ast_k$ (and we are intentionally quotienting out $2$ before taking the equaliser). 
Now let
\[
K:=\ker\big(\res^{\Z/2}_e\colon \pi_0\TCR(k;2)^{\phi\Z/2}\longrightarrow (\pi_0\TC(k;2))^{\Z/2}/Im(1+w)\big)
\]
be the kernel of the restriction map, where $w$ is the involution on $\pi_0\TC(k;2)$ induced from the $\Z/2$-action on $\TCR(k;2)$. Let us also denote $T_{-1}:=\pi_{-1}\TCR(k;2)^{\phi\Z/2}$, which we consider as a $\pi_0\TCR(k;2)^{\phi\Z/2}$-module. The following is a $\TC$ analogue of \cite[Theorem (2)]{Kato}.

\begin{corA} For every field $k$ of characteristic $2$, there is an isomorphism of graded rings 
\[\nu_{dRW/2}^\ast(k;2)\cong K^\ast/K^{\ast+1},\]
and an isomorphism of graded $K^\ast/K^{\ast+1}$-modules
\[
\epsilon^\ast_{dRW/2}(k;2)\cong K^{\ast}T_{-1}/K^{\ast+1}T_{-1}.
\]
\end{corA}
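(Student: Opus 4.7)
The plan is to deduce the corollary from the infinite-level limit of Theorem~\ref{thmintro:dRW} together with the fiber sequence defining $\TCR$ as the equaliser of $\id$ and $F$. First I would pass Theorem~\ref{thmintro:dRW} to the inverse limit over the restriction maps $R$. Setting $J_\infty^\ast:=\varprojlim_n J_{\langle 2^n\rangle}^\ast$ and checking the Mittag--Leffler condition in each degree via the explicit generators supplied by Proposition~\ref{prop:additivegen}, one obtains a strict isomorphism of $2$-typical Witt complexes
\[
(\W_{\langle 2^\infty\rangle}\Omega^\ast_k)/2\stackrel{\cong}{\longrightarrow} J_\infty^\ast/J_\infty^{\ast+1},
\]
which intertwines the Frobenius operator on both sides.

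Next, from the equaliser definition of $\TCR$ I would use the long exact sequence induced by the fiber sequence
\[
\TCR(k;2)^{\phi\Z/2}\longrightarrow \TRR(k;2)^{\phi\Z/2}\xrightarrow{\,1-F\,}\TRR(k;2)^{\phi\Z/2}
\]
to realise $\pi_0\TCR(k;2)^{\phi\Z/2}$ as the kernel of $1-F$ acting on $\pi_0\TRR(k;2)^{\phi\Z/2}$, and $T_{-1}$ as (an extension involving) the cokernel of the same map; vanishing of the neighbouring homotopy groups of $\TRR(k;2)^{\phi\Z/2}$ needed here comes from Theorem~\ref{thm:geomTRn}. Unwinding the definitions of $\res^{\Z/2}_e$ and $\res^{D_{2^n}}_{C_{2^n}}$ identifies $K$ with $J_\infty\cap \pi_0\TCR(k;2)^{\phi\Z/2}$, so that the filtration by $K^\ast$ is precisely the restriction of the $J_\infty^\ast$-filtration. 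Applying the snake lemma to
\[
0\to J_\infty^{n+1}\to J_\infty^n\to J_\infty^n/J_\infty^{n+1}\to 0
\]
with vertical map $1-F$ then produces a six-term exact sequence which, combined with the isomorphism of the first step, relates $K^n/K^{n+1}$ to $\nu_{dRW/2}^n=\ker(1-F)$ on the associated graded, and relates $K^nT_{-1}/K^{n+1}T_{-1}$ to $\epsilon_{dRW/2}^n=\coker(1-F)$ on the associated graded. Since all of the maps involved arise from ring and module homomorphisms, the resulting isomorphisms automatically respect the graded ring and graded module structures.

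The main obstacle will be showing that the connecting homomorphisms in these snake-lemma sequences vanish, so that $K^n/K^{n+1}\to \nu_{dRW/2}^n$ and $K^nT_{-1}/K^{n+1}T_{-1}\to \epsilon_{dRW/2}^n$ are genuine isomorphisms and not merely injections/surjections. This reduces to a surjectivity statement for $1-F$ on each successive filtration quotient, which should follow from the explicit description of $F$ on the generators provided by Proposition~\ref{prop:additivegen}, together with the classical surjectivity of $1-F$ on the de Rham--Witt complex modulo~$2$. The overall structure of the argument directly mirrors Kato's derivation of \cite[Theorem~(2)]{Kato} from \cite[Theorem~(1)]{Kato}.
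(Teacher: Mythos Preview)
Your outline is genuinely different from the paper's argument, and it contains a real gap.

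The paper does not work with the limit $J_\infty$ or a snake-lemma comparison along $1-F$ on $\TRR$. Instead it reduces everything to classical objects and invokes Kato's theorems. First, \cite[Proposition~2.26]{CMM} identifies $\nu_{dRW/2}^\ast$ and $\epsilon_{dRW/2}^\ast$ with the equaliser and coequaliser $\nu^\ast(k)$, $\epsilon^\ast(k)$ of $\pi$ and $C^{-1}$ on the de~Rham complex; then Kato's Theorem~(2) gives $\nu^\ast(k)\cong I^\ast/I^{\ast+1}$ and $\epsilon^\ast(k)\cong I^\ast\W^q(k)/I^{\ast+1}\W^q(k)$ for $I\subset\W^s(k)$ the fundamental ideal. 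Since Corollary~\ref{cor:geomTCRk} and Kato's Theorem~(1) identify $\W^s(k)\cong\pi_0\TCR(k;2)^{\phi\Z/2}$ and $\W^q(k)\cong T_{-1}$ as rings and modules, everything comes down to checking that $I$ corresponds to $K$ under this isomorphism. The paper establishes this last point using the equivariant trace $\KR(k)\to\TCR(k;2)$: it intertwines the rank map with $\res^{\Z/2}_e$ and sends $\langle a\rangle$ to $a^{-1}\otimes a$. Thus the corollary is deduced \emph{from} the Milnor conjecture (Kato's work), not re-proved at the $\TRR$ level; Theorem~\ref{thmintro:dRW} is not used.

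The gap in your plan is the clause ``so that the filtration by $K^\ast$ is precisely the restriction of the $J_\infty^\ast$-filtration.'' From $K=J_\infty\cap\pi_0\TCR(k;2)^{\phi\Z/2}$ you cannot conclude $K^n=J_\infty^n\cap\pi_0\TCR(k;2)^{\phi\Z/2}$: powers of an ideal in a subring need not coincide with the intersection with powers of a larger ideal. Without this, the snake lemma only identifies $\ker(1-F\mid J_\infty^n)/\ker(1-F\mid J_\infty^{n+1})$ with $\nu_{dRW/2}^n$, not $K^n/K^{n+1}$; and likewise on the cokernel side you would need to know that $K^nT_{-1}$ equals the image of $J_\infty^n$ in $T_{-1}$, which is again the statement to be proved. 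This step is exactly the analogue of the nontrivial content of Kato's Theorem~(2), so assuming it is essentially circular. There are also smaller loose ends: showing $\varprojlim_n(J_{\langle 2^n\rangle}^q)$ agrees with the $q$-th power of $\varprojlim_n J_{\langle 2^n\rangle}$ and that quotienting by the next power commutes with the limit; and the claim that $\pi_0\TCR^{\phi\Z/2}=\ker(1-F)$ on $\pi_0\TRR^{\phi\Z/2}$ requires surjectivity of $1-F$ in degree $1$, not vanishing of $\pi_1$. The paper itself remarks, at the end of \S\ref{sec:TCR}, that a direct proof avoiding Kato's theorems along lines similar to yours is plausible, but would require establishing a nontrivial exactness statement that it does not carry out.
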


We prove this result in \S\ref{sec:TCR}. Our argument is fairly straightforward, but relies on the Milnor conjecture at the prime $2$ and on the identification from \cite[Proposition 2.26]{CMM} of $\nu_{dRW/2}^\ast(k;2)$ and $\epsilon_{dRW/2}^\ast(k;2)$ with the respective equaliser and coequaliser
\[
\xymatrix{\nu^\ast(k)\ar[r]&\Omega^\ast_k\ar@<.5ex>[r]^-{\pi}\ar@<-.5ex>[r]_-{C^{-1}}&\Omega^\ast_k/d(\Omega^{\ast-1}_k)\ar[r]&\epsilon^\ast(k)}
\]
of the projection $\pi$ and the inverse Cartier operator $C^{-1}$. We then use Theorem \ref{thmintro:TCR} to compare $K$ with the augmentation ideal $I$ of the Witt group $\W^s(k)$. In order to carry out this last step, we need to understand the restriction map of $\pi_0\TCR(k;2)^{\phi\Z/2}$. We are unable to do this directly, and we need to employ the existence of a trace map of $\Z/2$-equivariant spectra from the real algebraic K-theory spectrum $\tr\colon\KR(k)\to \TCR(k;2)$, which lifts the K-theoretic trace of \cite{BHM} and which has a certain effect on $\pi_0$. This map will appear in forthcoming work of Harpaz-Nikolaus-Shah \cite{HNS} in the framework of real K-theory of Poincar\'e $\infty$-categories. For completeness, we will give a construction in Appendix \S\ref{sec:trace} for rings with involution $A$, by lifting the trace map of \cite{DO} from $\THR(A)$ to $\TCR(A;p)$.

\begin{theoremA}
Let $A$ be a ring with involution.
For every prime $p$, there is a map of $\Z/2$-spectra $\tr\colon \KR(A)\to \TCR(A;p)$ which forgets to the $K$-theoretic trace map of \cite{BHM}. The composite
\[
\GW^s(A)=\pi_0(\KR(A)^{\Z/2})\xrightarrow{\tr} \pi_0(\TCR(A;2)^{\Z/2})\xrightarrow{R} \pi_0(\THR(A)^{\Z/2})\cong (A^{\Z/2}\otimes A^{\Z/2})/T
\]
sends the element of the Grothendieck-Witt group $\GW^s(A)$ represented by a symmetric form $x$ on the free module $A^{\oplus n}$ to
\[
\tr(x)=\sum_{i=1}^n\big((x^{-1})_{ii}\otimes x_{ii}-(x^{-1})_{ii}x_{ii}\otimes 1\big)+n\otimes 1,
\]
where $x_{ii}$ are the diagonal entries of the matrix of $x$ for the standard basis of $A^{\oplus n}$, and $x^{-1}$ denotes the inverse matrix. Here the isomorphism describing $\pi_0(\THR(A)^{\Z/2})$ is from \cite[Theorem 5.1]{THRmodels}.
\end{theoremA}

\subsection*{Acknowledgements}
I genuinely thank Ib Madsen for encouraging me to look into a possible relation between real THH and the Milnor conjecture, which eventually led to the ideas of this paper. I also thank  Irakli Patchkoria, Thomas Read, and Damiano Testa for helpful conversations regarding some technical aspects of the project.

The author is supported by EPSRC grant EP/W019620/1. 
For the purpose of open access, the author has applied a Creative Commons Attribution (CC-BY) licence to any Author Accepted Manuscript version arising from this submission.

\tableofcontents

\section{Preliminaries on real topological Hochschild homology}\label{prelim}

Here we recall the basic definitions surrounding real topological cyclic homology. In order to streamline this section, we recast the definitions in the special case where the input is a discrete commutative ring $A$ with the trivial involution (which in the next sections of the paper will be a field $k$ of characteristic $2$). We refer the details of these constructions to \cite{THRmodels} and \cite{geomTCR}, and we will freely use the language of stable equivariant homotopy theory.

Let $O(2)$ be the infinite dihedral group, that we identify with the semi-direct product $\Z/2\rtimes S^1$ by choosing the reflection across the real axis as the generator for $\Z/2$.
The real topological Hochschild homology of $A$ is a ring $O(2)$-spectrum $\THR(A)$, whose underlying ring $S^1$-spectrum is the topological Hochschild homology spectrum $\THH(A)$, originally defined in \cite{Bok} (see also \cite{BHM} and \cite{NS}). It can be constructed, as an $O(2)$-equivariant ring orthogonal spectrum, as the geometric realisation of the dihedral bar construction 
\[
\THR(A):=|N^{di}\EM A|=|[n]\mapsto (\EM A)^{\otimes n+1}|,
\]
where $\EM A$ is (a flat model for) the Eilenberg-MacLane ring orthogonal $\Z/2$-spectrum of $A$, and $\otimes$ denotes the smash product of spectra (see \cite{THRmodels}). The action of $O(2)$ is defined from the structure of a dihedral object in the sense of \cite[S 1.5, Example 5]{FLcrossed} and \cite{LodayDihedral}, where the cyclic group $C_{n+1}$ acts in simplicial degree $n$ by rotating the $n+1$ smash factors, and the reflection acts in degree $n$ by reversing the order of the last $n$ smash factors.

Now let $p$ be a prime, $n\geq 0$ an integer, and $D_{p^n}=\Z/2\rtimes C_{p^n}$ the finite dihedral subgroup of $O(2)$ of order $2p^n$. Since the Weyl group of $C_{p^n}$ inside $D_{p^n}$ is $\Z/2$, the (genuine) fixed-points ring spectrum $\THR(A)^{C_{p^n}}$ is canonically a ring $\Z/2$-spectrum.
The inclusion of subgroups $C_{p^{n-1}}\leq C_{p^n}$ induces a restriction map $F$, also called Frobenius, and a transfer map $V$, also called Verschiebung, which are maps of $\Z/2$-spectra
\[
\xymatrix{\THR(A)^{C_{p^n}}\ar@<.5ex>[r]^-F &\THR(A)^{C_{p^{n-1}}}\ar@<.5ex>[l]^-V.}
\]
There is a further map $R$ of $\Z/2$-spectra, sometimes called restriction or truncation
\[
\THR(A)^{C_{p^n}}\stackrel{R}{\longrightarrow} \THR(A)^{C_{p^{n-1}}},
\]
which is defined from the real cyclotomic structure of $\THR(A)$ (see \cite[Definition 3.9]{polynomial}). The maps $R$ and $F$ are moreover maps of ring spectra (see \cite[Remark 3.10]{polynomial}).
On underlying spectra, these are the maps $F,V$ and $R$ of $\THH(A)$, which after applying $\pi_0$ correspond to the operators on the ring of Witt vectors with the same name, see \cite[Theorem 3.3]{WittVect}.

\begin{defn} Let $A$ be a commutative ring, and $p$ a prime. The $p$-typical truncated real topological restriction homology, real topological restriction homology, and real topological cyclic homology of $A$ are the ring $\Z/2$-spectra defined respectively as:
\begin{align*}
&\TRR^{n+1}(A;p):=\THR(A)^{C_{p^n}},
\\
&\TRR(A;p):=\lim\big(
\dots\xrightarrow{R}   \TRR^{n+1}(A;p)\xrightarrow{R}  \TRR^{n}(A;p)\xrightarrow{R} \dots\xrightarrow{R} \TRR^{1}(A;p)=\THR(A)
\big),
\\
&\TCR(A;p):=eq\big(\xymatrix{\TRR(A;p)\ar@<.5ex>[r]^-\id\ar@<-.5ex>[r]_-F&\TRR(A;p)}\big),
\end{align*}
where the map $F$ in the equaliser is induced by the Frobenius maps above, since $R$ and $F$ commute.
\end{defn}

The $\Z/2$-geometric fixed points of these spectra are characterised in \cite{geomTCR}, as we now recall.
These results will be used in \S\ref{sec:geomTRR} below, and we encourage the reader, at least for the purpose of the present paper, to take them as definitions of these objects.

In \cite[\S 1.2]{geomTCR}, we give a canonical equivalence of ring spectra
\[
\THR(A)^{\phi\Z/2}=\TRR^{1}(A;p)^{\phi \Z/2}\simeq  (\EM \underline{A})^{\phi\Z/2}\otimes_{\EM A}(\EM \underline{A})^{\phi\Z/2},
\]
where $\EM \underline{A}$ is the Eilenberg MacLane spectrum of the $\Z/2$ Mackey functor (or Tambara functor) with constant value $A$ and transfer map $2$. Its geometric fixed-points spectrum is then regarded as an $\EM A$-module via the map of ring spectra 
\[
\EM A\simeq (N^{\Z/2}_e\EM A)^{\phi\Z/2}\xrightarrow{\epsilon^{\phi\Z/2}}(\EM \underline{A})^{\phi\Z/2},
\]
where $N^{\Z/2}_e\EM A$ is the Hill-Hopkins-Ravenel norm construction of the ring spectrum $\EM A$ of \cite{HHR} and \cite{Sto}, and $\epsilon$ is the counit of the free-forgetful adjunction between commutative ring $\Z/2$-spectra and commutative ring spectra. We will call this the Frobenius module structure of $(\EM \underline{A})^{\phi\Z/2}$, and refer to \cite[\S 2.5]{THRmodels} for the details of its construction. The Weyl group of $\Z/2$ in $D_{2}=\Z/2\times C_2$ is $C_2$, and therefore $\THR(A)^{\phi\Z/2}$ is canonically a ring $C_2$-spectrum. In \cite[Lemma 1.2]{geomTCR}, we lift the equivalence above to an equivalence of ring $C_2$-spectra
\[
\THR(A)^{\phi\Z/2}\simeq \EM \underline{A}\otimes_{N^{C_2}_e\EM A}N^{C_2}_e((\EM \underline{A})^{\phi\Z/2}),
\]
where the right factor is a module by applying the norm to the map $\EM A\to (\EM \underline{A})^{\phi\Z/2}$, and the left factor is now regarded as a $C_2$-spectrum.

This $C_2$-equivariant homotopy type will help us characterise the $\Z/2$-geometric fixed points of $\TRR^{n+1}(A;p)$, inductively on $n$.
For every $n\geq 1$, the $\Z/2$-geometric fixed points of $\TRR^{n+1}(A;p)$ is equivalent to the product of $(n+1)$-copies of $\THR(A)^{\phi\Z/2}$ if $p$ is odd, see \cite[Theorem 2.1]{geomTCR}. For $p=2$, they are given by a pullback of ring spectra
\[
\xymatrix@C=60pt{
\TRR^{n+1}(A;2)^{\phi \Z/2}\ar[r]^-R\ar[d]_{(c F^{n-1}, c F^{n-1}\sigma_{n+1})}
&
\TRR^{n}(A;2)^{\phi\Z/2}\ar[d]^{(F^{n-1}, \sigma_1F^{n-1}\sigma_{n})}
\\
(\THR(A)^{\phi\Z/2})^{C_2}\times (\THR(A)^{\phi\Z/2})^{C_2}\ar[r]^-{r\times \sigma_{1}r}&\THR(A)^{\phi\Z/2}\times \THR(A)^{\phi\Z/2},
}
\]
see \cite[Theorem 2.7]{geomTCR}. Here $\sigma_{n}$ is the generator of the Weyl group of $\Z/2$ inside the quotient $D_{2^n}/C_{2^{n-1}}$, which is also of order $2$. The map
\[
c\colon (\THR(A)^{C_2})^{\phi\Z/2}\longrightarrow (\THR(A)^{\phi\Z/2})^{C_2}
\]
is a certain canonical map, and $r$ is the canonical map to the $C_2$-geometric fixed points followed by the equivalence given by the cyclotomic structure (see above \cite[Theorem 2.7]{geomTCR} for the definitions).

In \cite[Theorem A]{geomTCR}, we also characterise the real topological cyclic homology of $A$, by providing an equivalence of ring spectra
\[
\TCR(A;2)^{\phi\Z/2}\simeq\big(\xymatrix{(\THR(A)^{\phi\Z/2})^{C_2}\ar@<.5ex>[r]^-r\ar@<-.5ex>[r]_-f&\THR(A)^{\phi\Z/2}}\big),
\]
where $f$ is the forgetful map.

Finally, we will need to briefly use the existence of norm maps on $\THR(A)$ in order calculate a certain restriction map, in Propositions \ref{prop:norm} and \ref{prop:res}. To establish their existence, we simply observe that the dihedral bar construction employed above to define $\THR$ has a canonical symmetric monoidal structure, and therefore $\THR(A)$ is a strictly commutative $O(2)$-equivariant ring spectrum (provided we choose a strictly commutative and flat model for the Eilenberg-MacLane ring $C_2$-spectrum $\EM \underline{A}$, which we can achieve by a cofibrant replacement in the flat model structure of \cite{Sto, BrDuSt}). Thus, we obtain non-additive norm maps
\[
N_{H}^G\colon \pi_0\THR(A)^H\longrightarrow \pi_0\THR(A)^G
\]
for every pair of finite subgroups $H\leq G\leq O(2)$, which, when composed with a restriction map, satisfy the multiplicative double-coset formula.

\section{Real TR and real TC of fields of characteristic $2$}

\subsection{The geometric fixed points of TRR and TCR for fields of characteristic $2$}\label{sec:geomTRR}

Let $k$ be a field of characteristic $2$, and $S\leq k$ the subfield of squares. We regard $k$ as an $S$-vector space, and endow the abelian group $k\otimes_Sk$ with the involution $w$ which flips the two tensor factors.

The homotopy groups of $\TRR(k)^{\phi\Z/2}$ have been computed in \cite[Theorem 4.7, Corollary 4.8]{geomTCR} when the field $k$ is perfect, as a sum of copies of $k$. In this section we give an analogous description of these homotopy groups for a general field of characteristic $2$ (and an analogous proof), where some of the copies of $k$ appearing in the calculation for perfect fields are replaced by expressions involving $k\otimes_S k$ (which is isomorphic to $k$ if $k$ is perfect). This is Theorem \ref{thm:geomTRn} below, and its statement and proof will be the content of \S\ref{sec:geomTRR}.

The key algebraic input for extending the calculation to non-perfect fields lies in the following Lemma, which we will use several times throughout the paper. For every elementary tensor $a\otimes b\in k\otimes_Sk$, let us define
\[
\phi(a\otimes b):=ba^2\otimes b  \in ( k\otimes_S k)^{C_2},
\]
where the $C_2$-invariants on the right are with respect to the involution $w$.
We note that this map does not obviously extend to $k\otimes_Sk$, as it is unclear how to define it on a sum of elementary tensors. It will serve as a replacement of the Frobenius of $k$, and will be related to the cyclotomic structure of $\THR(k)$ by Proposition \ref{prop:lowerFR}, and to the fibre sequence of \cite[Theorem (1)]{Kato} describing the Witt groups of $k$ in Remark \ref{rem:L}.

\begin{lemma}\label{Lemma:Frobeniuslift}
The assignment $\phi$ induces a well-defined additive isomorphism $k\otimes_Sk\xrightarrow{\cong}(k\otimes_Sk)^{C_2}/Im(1+w)$. This isomorphism moreover fits into a commutative diagram
\[
\xymatrix@C=50pt{
k\otimes_Sk\ar[d]^{\mu}\ar[r]_-{\cong}^-{\phi}&(k\otimes_Sk)^{C_2}/Im(1+w)\ar[d]^{\mu}
\\
k\ar[r]^-{(-)^2}&k
}
\]
where the map $\mu$ is the multiplication map, which is an isomorphism if and only if $k$ is perfect.
\end{lemma}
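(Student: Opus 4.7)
My plan is to verify that $\phi$ is a well-defined additive (indeed multiplicative) map, and then to compute it explicitly in an $S$-basis of $k$, from which both bijectivity and the commutativity of the diagram follow transparently. The main obstacle is conceptual rather than technical: one has to recognise that $\phi$ is Frobenius-semilinear over $S$ and that $Im(1+w)$ kills exactly the off-diagonal part of the $C_2$-invariants, after which everything collapses into an easy identification.

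For well-definedness, since $a^2 \in S$ one has $ba^2 \otimes b = b \otimes a^2 b = w(ba^2 \otimes b)$, so $\phi(a \otimes b) \in (k \otimes_S k)^{C_2}$. The rule $(a,b) \mapsto ba^2 \otimes b$ is $S$-balanced, because for $s \in S$ both $\phi(sa \otimes b)$ and $\phi(a \otimes sb)$ simplify to $s^2(ba^2 \otimes b)$. Additivity in $a$ uses $(a_1+a_2)^2 = a_1^2 + a_2^2$ in characteristic $2$. For additivity in $b$, the obstruction is the cross term $b_1 a^2 \otimes b_2 + b_2 a^2 \otimes b_1$, which equals $(1+w)(b_1 a^2 \otimes b_2)$ and so vanishes in the quotient. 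Hence $\phi$ descends to a well-defined additive (and in fact ring) homomorphism.

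To establish bijectivity, I fix an $S$-basis $\{e_i\}_{i \in I}$ of $k$. The $C_2$-invariants have $S$-basis $\{e_i \otimes e_i\} \cup \{e_i \otimes e_j + e_j \otimes e_i\}_{i < j}$, and $Im(1+w)$ is exactly the $S$-span of the off-diagonal symmetric vectors, so the quotient has $S$-basis $\{\overline{e_i \otimes e_i}\}_{i \in I}$. Combining additivity, the semilinearity $\phi(s \cdot y) = s^2 \phi(y)$ for $s \in S$, and the identity $(\sum_i b_{ij} e_i)^2 = \sum_i b_{ij}^2 e_i^2$ from char $2$, I obtain for $x = \sum_{i,j} b_{ij} e_i \otimes e_j$ the explicit formula
\[
\phi(x) \;=\; \sum_j \Bigl(\sum_i b_{ij} e_i\Bigr)^{\!2} \, \overline{e_j \otimes e_j}.
\]
Injectivity is then immediate: if this vanishes, each inner sum squares to zero in the field $k$ and hence vanishes, so all $b_{ij}=0$. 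Surjectivity follows because every coefficient $s_j \in S$ in the target is by definition a square $u_j^2$ in $k$, and expanding $u_j$ in the basis $\{e_i\}$ produces the required $b_{ij}$'s. The diagram commutes by $\mu(\phi(a \otimes b)) = ba^2 b = (ab)^2 = \mu(a \otimes b)^2$. Finally, the left $\mu$ is an isomorphism iff $[k:S]=1$, iff $k$ is perfect, by an $S$-dimension count; and the right $\mu$ sends $\overline{e_i \otimes e_i} \mapsto e_i^2 \in S$, so its image lies in $S \subseteq k$ and it is surjective iff $S = k$, again iff $k$ is perfect (in which case both sides trivially collapse to $k$ and $\mu$ becomes the identity).
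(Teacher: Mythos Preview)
Your proof is correct and follows essentially the same approach as the paper's: both choose an $S$-basis of $k$, identify $(k\otimes_S k)^{C_2}/\operatorname{Im}(1+w)$ with the diagonal summands, and then observe that $\phi$ becomes the basis-wise squaring isomorphism $k\xrightarrow{(-)^2}S$. Your account is more explicit about well-definedness (the $S$-balancing and the cross-term lying in $\operatorname{Im}(1+w)$) and about the resulting coordinate formula, whereas the paper presents the same identification in a more compressed categorical form, but there is no substantive difference in the argument.
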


\begin{proof}
It is easy to see that $\phi$ extends to a well-defined additive map after we quotient the image of $1+w$ in the target. To see that it is an isomorphism, choose a basis $k\cong \oplus_XS$ of $k$ as an $S$-vector space. This induces an isomorphism of $C_2$-equivariant abelian groups 
\[
k\otimes_Sk\cong \bigoplus_{X\times X}S
\]
where the involution on the right-hand side sends a basis element $(x,y)$ of $X\times X$ to $(y,x)$. Under this isomorphism, the map $\phi$ corresponds to the map
\[
\bigoplus_{X\times X}S\cong \bigoplus_{X}(\bigoplus_XS)\cong \bigoplus_{X}k\cong(\bigoplus_{X\times X}S)^{C_2}/Im(1+w)
\]
where the second isomorphism is the sum over $X$ of the isomorphism $k\cong \oplus_XS$, and the
last isomorphism sends the summand $x$ to the summand $(x,x)$ via the square map $(-)^2\colon k\xrightarrow{\cong}S$.
\end{proof}

We calculate the homotopy groups of $\TRR(k;2)^{\phi\Z/2}$ using the iterated pullback description of \cite[Theorem 2.7]{geomTCR}, reviewed in \S\ref{prelim}. This description relies on the $C_2$-equivariant homotopy type of $\THR(k)^{{\phi \Z/2}}$, which we calculate in Proposition \ref{prop:geofixgenuine} below using Lemma \ref{Lemma:Frobeniuslift} and the following decomposition of the geometric fixed points $\EM \underline{k}^{\phi\Z/2}$.

\begin{lemma}\label{lemma:decompgeomk}
Let $k$ be a field of characteristic $2$, and let us equip  $\EM \underline{k}^{\phi\Z/2}$ with the Frobenius module structure of \S\ref{prelim}. Then there is a natural splitting of $k$-modules
\[\EM\underline{k}^{{\phi \Z/2}} \simeq \bigoplus_{n \geq 0} \Sigma^{n} \EM(\varphi^\ast k),\]
where $\varphi=(-)^2\colon k\to k$ denotes the Frobenius homomorphism of $k$.
\end{lemma}

\begin{proof}
Since $k$ is a field, the Frobenius module structure on $\underline{k}^{{\phi \Z/2}}$ provides an equivalence of $k$-modules
\[\EM\underline{k}^{{\phi \Z/2}} \simeq \bigoplus_{n \geq 0} \Sigma^{n} \EM(\pi_n(\EM\underline{k}^{{\phi \Z/2}})).\]
Since the Frobenius module structure on $\EM\underline{k}^{{\phi \Z/2}}$ comes from a $k$-algebra $\EM k\to\EM\underline{k}^{{\phi \Z/2}}$, the action of $k$ on $\pi_n(\EM\underline{k}^{{\phi \Z/2}})$ is obtained by restricting, along the ring map $k=\pi_0\EM k\to \pi_0(\EM\underline{k}^{{\phi \Z/2}})$, the action of $\pi_0(\EM\underline{k}^{{\phi \Z/2}})$ on $\pi_n(\EM\underline{k}^{{\phi \Z/2}})$ induced by the ring structure of $\EM\underline{k}^{{\phi \Z/2}}$. The $\pi_0(\EM\underline{k}^{{\phi \Z/2}})$-module  $\pi_n(\EM\underline{k}^{{\phi \Z/2}})$ can be computed from the isotropy separation sequence as follows. The canonical ring homomorphism $\EM k=\EM\underline{k}^{\Z/2}\to\EM\underline{k}^{{\phi \Z/2}}$ induces a long exact sequence of $k$-modules
\[
\dots 
\xrightarrow{\partial}
\pi_{1}\EM k_{h \Z/2}
\to\pi_1 \EM k\to 
\pi_1 \EM\underline{k}^{{\phi \Z/2}} \xrightarrow{\partial} 
\pi_{0}\EM k_{h \Z/2}\to
\pi_{0}\EM k\to
\pi_{0} \EM\underline{k}^{{\phi \Z/2}}
\to 0.
\]
Since $\pi_n\EM k=0$ for $n>0$ and since the transfer map  $k=k_{\Z/2}\cong\pi_{0}\EM k_{h \Z/2}\to
\pi_{0}\EM k=k$ is multiplication by $2$ and hence also zero, all the connecting homomorphisms are isomorphisms of $k$-modules $\pi_n \EM\underline{k}^{{\phi \Z/2}}\cong \pi_{n-1}\EM k_{h \Z/2}$ for $n>0$. The homotopy groups of the homotopy-orbit spectra are equivalent to group-cohomology, and since $k$ is of characteristic $2$ the standard resolution
\[
\dots \xrightarrow{0}k \xrightarrow{2}k \xrightarrow{0}k \xrightarrow{2}k\to 0
\]
gives an isomorphism of $k$-modules $\pi_{n-1}\EM k_{h \Z/2}\cong H^{n-1}(\Z/2;k)\cong k$ for every $n>0$. Moreover, again because the transfer map is zero, the canonical map $k=\pi_0 \EM k\to \pi_0 \EM\underline{k}^{{\phi \Z/2}}$ is an isomorphism of rings. Thus we have completely identified the $\pi_0 \EM\underline{k}^{{\phi \Z/2}}$-module structure of  $\pi_n \EM\underline{k}^{{\phi \Z/2}}$.

It finally remains to show that, under the isomorphism $k\cong\pi_0 \EM\underline{k}^{{\phi \Z/2}}$ above, the ring map $\EM k\to  \EM\underline{k}^{{\phi \Z/2}}$ defining the Frobenius module structure induces the Frobenius $\varphi$ in $\pi_0$.
This follows either from identifying this map with the Tate-valued Frobenius, see \cite[Example IV.1.2.(i)]{NS}, or by the following direct calculation. The counit $\epsilon\colon N^{\Z/2}_e(\EM k)\to \EM\underline{k}$ induces a map on isotropy separation sequences
\[
\xymatrix{
&k\ar[d]^{\tau}\ar[dr]^-{\Delta}_-\cong
\\
(k\otimes k)_{\Z/2}\ar[d]\ar[r]&\pi_0(N_e^{\Z/2}(\EM k))^{\Z/2}\ar[d]^-{\epsilon^{\Z/2}}\ar[r]&\pi_0(N_e^{\phi\Z/2}(\EM k))^{\Z/2}\ar[d]^-{\epsilon^{\phi\Z/2}}
\\
k\ar[r]^0&k\ar[r]^-{\cong}&k
}
\]
where the map $\tau$ is the external norm map. We need to identify the composite $\epsilon^{\Z/2}\tau$ of the two vertical maps in the middle column.
This is the norm of the constant Tambara functor $\underline{k}$ associated to the commutative ring $k$, and it is therefore the Frobenius $\varphi$ (see also \cite[Example 2.18]{DKNP2} for an explicit identification of the target of $\tau$).
\end{proof}

We denote by $\EM(k\otimes_Sk,w)$ the $C_2$-equivariant Eilenberg-MacLane spectrum of the abelian group $k\otimes_Sk$ with $C_2$-action $w$ which switches the tensor factors.

\begin{prop}\label{prop:geofixgenuine} Let $k$ be a field of characteristic $2$. Then there is a natural equivalence of $C_2$-equivariant spectra
\[{\THR(k)}^{{\phi \Z/2}}\simeq \bigoplus_{n \geq 0} \Sigma^{n\rho}  \EM(k\otimes_Sk,w) \oplus \bigoplus_{ \begin{smallmatrix} (n,m) \\ 0 \leq n  < m \end{smallmatrix}} \Sigma^{n+m} {C_2}_{+} \otimes \EM(k\otimes_Sk),\]
where $\rho$ is the regular representation of $C_2$. It follows that there is a natural equivalence of spectra
\[
({\THR(k)}^{{\phi \Z/2}})^{C_2}\simeq (\bigoplus_{n \geq 0} \big((\bigoplus_{0 \leq j < n} \Sigma^{n+j} \EM(k\otimes_S k))\oplus \Sigma^{2n}  \EM(k\otimes_Sk)^{C_2}\big)) 
\oplus (\bigoplus_{\begin{smallmatrix} (n,m) \\ 0 \leq n  < m \end{smallmatrix}}  \Sigma^{n+m} \EM(k\otimes_Sk)).
\]
\end{prop}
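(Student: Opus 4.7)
The plan is to adapt the argument of \cite[\S 4.2]{geomTCR} from perfect fields to general fields of characteristic $2$, by systematically replacing $k$ with $k\otimes_S k$ and invoking Lemma \ref{Lemma:Frobeniuslift} wherever the Frobenius isomorphism $k\xrightarrow{\cong}k$ was used in that proof. The starting point is the equivalence of ring $C_2$-spectra
\[
\THR(k)^{\phi\Z/2}\simeq \EM\underline{k}\otimes_{N^{C_2}_e\EM k} N^{C_2}_e((\EM\underline{k})^{\phi\Z/2})
\]
from \cite[Lemma 1.2]{geomTCR}, which reduces the computation to three ingredients: the $\EM k$-algebra structure of $(\EM\underline{k})^{\phi\Z/2}$, its Hill--Hopkins--Ravenel norm, and the indicated relative tensor product. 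I would run the perfect-case argument unchanged, tracking how $k\otimes_S k$ enters: the value $\pi_0\THR(k)^{\phi\Z/2}=k\otimes_S k$ from \cite[Corollary 5.2]{THRmodels} furnishes the coefficient ring at bidegree $(0,0)$, while higher degrees come from a polynomial generator $x$ of degree $1$ in $(\EM\underline{k})^{\phi\Z/2}$; the norm doubles this variable into a pair $x,y$ with $C_2$ swapping them, and the relative tensor product over $N^{C_2}_e\EM k$ reduces the two coefficient copies to a single $k\otimes_S k$ retaining the $w$-action.

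The stated decomposition then arises by splitting the resulting $(k\otimes_S k)[x,y]$-graded structure by the $C_2$-action on $(x,y)$: diagonal monomials $x^ny^n$, of total degree $2n$, are $C_2$-stable and assemble into $\Sigma^{n\rho}\EM(k\otimes_S k,w)$; off-diagonal pairs $\{x^ny^m, x^my^n\}$ with $n<m$ form free $C_2$-orbits of total degree $n+m$, contributing $\Sigma^{n+m}{C_2}_{+}\otimes\EM(k\otimes_S k)$. The second formula for $(\THR(k)^{\phi\Z/2})^{C_2}$ then follows by taking $C_2$-fixed points summandwise: the induced pieces give $\Sigma^{n+m}\EM(k\otimes_S k)$ via $({C_2}_{+}\otimes X)^{C_2}\simeq X$, and the regular-representation pieces $\Sigma^{n\rho}\EM(M,w)$ unfold by the standard cellular filtration of $S^{n\rho}$ into a top summand $\Sigma^{2n}\EM M^{C_2}$ and lower summands $\Sigma^{n+j}\EM(M_{C_2})$ for $0\leq j<n$; by Lemma \ref{Lemma:Frobeniuslift}, the orbits $(k\otimes_S k)_{C_2}=(k\otimes_S k)/Im(1+w)$ are identified with $k\otimes_S k$, yielding the stated formula.

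The main obstacle will be verifying that the $C_2$-action on the coefficient copy of $k\otimes_S k$ surviving the base change is indeed $w$, rather than trivial or some twist, since this determines whether the diagonal summands are $\EM(k\otimes_S k,w)$ or $\EM(k\otimes_S k)$ and, via Lemma \ref{Lemma:Frobeniuslift}, fixes the precise form of the fixed-point formula. This should amount to a careful, though essentially formal, tracking of how the swap action on the Hill--Hopkins--Ravenel norm interacts with base change along the Frobenius module structure of $(\EM\underline{k})^{\phi\Z/2}$, mirroring the perfect-field argument of \cite[\S 4.2]{geomTCR} in which $k\otimes_S k\cong k$ trivialises this subtlety.
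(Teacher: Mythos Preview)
Your overall strategy matches the paper's: start from the equivalence of \cite[Lemma 1.2]{geomTCR}, split by monomials in the two norm variables (which the paper obtains from \cite[Lemma 4.3]{geomTCR}), and reduce to identifying the diagonal piece $\EM\underline{k}\otimes_{N^{C_2}_e\EM k}N^{C_2}_e\EM(\varphi^\ast k)$ with $\EM(k\otimes_S k, w)$. The paper does this last step by choosing a $k$-basis of $\varphi^\ast k$, which simultaneously settles your flagged ``main obstacle'' about the $w$-action on the coefficients.

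There is, however, a concrete error in your fixed-points computation. You assert that the cellular filtration of $S^{n\rho}$ produces lower summands $\Sigma^{n+j}\EM\big((k\otimes_S k)_{C_2}\big)$, and then invoke Lemma~\ref{Lemma:Frobeniuslift} to identify the coinvariants $(k\otimes_S k)_{C_2}=(k\otimes_S k)/\mathrm{Im}(1+w)$ with $k\otimes_S k$. This misquotes the lemma: it identifies $(k\otimes_S k)^{C_2}/\mathrm{Im}(1+w)$, not the coinvariants, with $k\otimes_S k$, and these two quotients genuinely differ whenever $k$ is imperfect (the free $C_2$-summands of $k\otimes_S k$ contribute to $M_{C_2}$ but die in $M^{C_2}/\mathrm{Im}(1+w)$). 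The Bredon chain complex computing $H^{C_2}_\ast(S^{n\rho};(M,w))$ is
\[
0\longleftarrow M^{C_2}\xleftarrow{\,1+w\,} M\xleftarrow{\,1+w\,}\cdots\xleftarrow{\,1+w\,} M\longleftarrow 0,
\]
so the intermediate homology groups are $\ker(1+w)/\mathrm{Im}(1+w)=M^{C_2}/\mathrm{Im}(1+w)$, not $M_{C_2}$. Once you correct this, Lemma~\ref{Lemma:Frobeniuslift} applies exactly as stated and your argument goes through. Note also that the splitting of $(\Sigma^{n\rho}\EM(M,w))^{C_2}$ as a wedge of Eilenberg--MacLane spectra is not automatic from the filtration alone; the paper obtains it by observing that this spectrum is an $\EM k$-module.
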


\begin{proof} Let $\EM\underline{k}$ be the Eilenberg MacLane $C_2$-spectrum of the ring with trivial involution $k$.
Using the splitting of Lemma \ref{lemma:decompgeomk}, we obtain from \cite[Lemma 4.3]{geomTCR} an equivalence of $C_2$-spectra
\[{\THR(k)}^{{\phi \Z/2}}\simeq \bigoplus_{n \geq 0} \Sigma^{n\rho}  \EM\underline{k}\otimes_{N_{e}^{C_2}\EM k}N_{e}^{C_2}\EM(\varphi^\ast k)
 \oplus \bigoplus_{ \begin{smallmatrix} (n,m) \\ 0 \leq n  < m \end{smallmatrix}} \Sigma^{n+m} {C_2}_{+} \otimes \EM(\varphi^\ast k\otimes_k\varphi^\ast k).\]
This equivalence is moreover natural in $k$ since the decomposition of $\EM\underline{k}^{{\phi \Z/2}}$ of Lemma \ref{lemma:decompgeomk} is natural. Clearly $\varphi^\ast k\otimes_k\varphi^\ast k=k\otimes_S k$, and therefore to obtain the first decomposition of the Proposition it is sufficient to show that the canonical map
\[
\EM\underline{k}\otimes_{N_{e}^{C_2}\EM k}N_{e}^{C_2}\EM(\varphi^\ast k)\longrightarrow \EM(k\otimes_{k\otimes k}(\varphi^\ast k\otimes \varphi^\ast k))\cong \EM(k\otimes_{S}k,w)
\]
is an equivalence, where the middle term is $\pi_0$ of the underlying spectrum of the left term, with the induced involution.

Let us choose a basis of the $k$-vector space $\varphi^\ast k$, that is we write $\varphi^\ast k$ as a direct sum
\[
\varphi^\ast k\cong \bigoplus_{X}k
\] 
over some set $X$. Since the norm commutes with direct sums we obtain an equivalence of $C_2$-spectra
\begin{align*}
\EM\underline{k}\otimes_{N_{e}^{C_2}\EM k}N_{e}^{C_2}\EM(\varphi^\ast k)&\simeq \EM\underline{k}\otimes_{N_{e}^{C_2}\EM k}N_{e}^{C_2}\EM(\bigoplus_{X}k)
\\&\simeq
\bigoplus_{X\times X} \EM\underline{k}\otimes_{N_{e}^{C_2}\EM k}N_{e}^{C_2}\EM k\simeq \bigoplus_{X\times X} \EM\underline{k},
\end{align*}
where the last term is the indexed sum of $\EM\underline{k}$ with the involution on $X\times X$ that swaps the product factors. Under this equivalence, the canonical map above corresponds to the equivalence
\[
\bigoplus_{X\times X} \EM\underline{k}\simeq \EM((\bigoplus_{X}k)\otimes_k(\bigoplus_{X}k),w)\simeq \EM((\varphi^\ast k)\otimes_k(\varphi^\ast k),w)=\EM(k\otimes_Sk,w)
\]
where the middle equivalence is the tensor product of two copies of the choice of basis above.

Now let us identify the $C_2$-fixed points of ${\THR(k)}^{{\phi \Z/2}}$. Notice that $\EM(k\otimes_Sk,w) $ is a module over $\EM\underline{k}$ (via the ring map $k\to k\otimes_Sk$ that sends $a$ to $a^2\otimes 1$) and therefore its $C_2$-fixed-points spectrum is an $\EM k$-module, and therefore it decomposes canonically as a wedge of Eilenberg-MacLane spectra. Its homotopy groups are isomorphic to the Bredon homology groups
\[\pi_i^{C_2}(\Sigma^{n\rho}  \EM(k\otimes_Sk,w)) =H^{C_2}_i(S^{n\rho}; (k\otimes_Sk,w)),\]
which in turn are the homology groups of the chain complex
\[
0\xleftarrow{}(k\otimes_Sk)^{C_2}\xleftarrow{1+w}k\otimes_Sk\xleftarrow{1+w}k\otimes_Sk\xleftarrow{1+w}\dots \xleftarrow{1+w}k\otimes_Sk\xleftarrow{}0
\]
where the first non-zero group on the left is in degree $n$ and the last non-zero group on the right is in degree $2n$ (notice that all the signs on the arrows are $+$ since $k$ has characteristic $2$). It follows that all the groups below $n$ and above $2n$ vanish, that
\[
\pi_{2n}^{C_2}(\Sigma^{n\rho}  \EM(k\otimes_Sk,w))\cong (k\otimes_Sk)^{C_2}
\]
and that
\[
\pi_{i}^{C_2}(\Sigma^{n\rho}\EM(k\otimes_Sk,w))\cong (k\otimes_Sk)^{C_2}/Im(1+w)\xleftarrow{\cong}k\otimes_Sk
\]
for every $n\leq i<2n$, where the left-pointing isomorphism is the map $\phi$ from Lemma \ref{Lemma:Frobeniuslift}.
 \end{proof}

In order to calculate the homotopy groups of $\TRR(k;2)^{\phi\Z/2}$ and $\TCR(k;2)^{\phi\Z/2}$, we also need to determine the maps $r$ and $f$ (see \S\ref{prelim}), under the equivalences of Proposition \ref{prop:geofixgenuine}.
In the following proposition, the summands are arranged exactly as in Proposition \ref{prop:geofixgenuine}. In particular, the summands indexed on $(n,m)$ with $n<m$ in the source, and those indexed on $(n,m)$ with $n \neq m$ in the target, correspond to the induced summands.

\begin{prop} \label{prop:lowerFR} For any field $k$ of characteristic $2$, the maps $r,f \colon {({\THR(k)}^{{\phi \Z/2}})}^{C_2} \to {\THR(k)}^{{\phi \Z/2}}$ induce on $\pi_\ast$ the maps
\[ r,f\colon (\bigoplus_{
\begin{smallmatrix}(n,m)
\\
 n+m=\ast
\\
n>m\geq 0
\end{smallmatrix}}
k\otimes_S k 
)
\oplus
(\bigoplus_{\begin{smallmatrix}
(n,n)\\
2n=\ast
\\
n \geq 0
\end{smallmatrix}}
(k\otimes_S k )^{C_2}
)
\oplus
(\bigoplus_{\begin{smallmatrix}(n,m)
\\
n+m=\ast
\\
0\leq n<m
\end{smallmatrix}}
k\otimes_S k 
 )
 \longrightarrow
  \bigoplus_{\begin{smallmatrix}(n,m)\\
  n+m=\ast\\
n, m \geq 0\end{smallmatrix}} k\otimes_S k,\]
where $r$ kills the $(n,m)$-summands with $n<m$, maps the $(n,m)$-summands with $n>m$ to the $(n,m)$-summand via the identity, and maps the $(n,n)$-summand to the $(n,n)$-summand via the composite
\[
(k\otimes_S k )^{C_2}\stackrel{\pi}{\longrightarrow} (k\otimes_S k )^{C_2}/Im(1+w)\xrightarrow{\phi^{-1}}k\otimes_S k
\]
of the quotient map and the isomorphism of Lemma \ref{Lemma:Frobeniuslift}. The map $f$ kills the $(n,m)$-summands with $n>m$, is the fixed-points inclusion on the summand $(n,n)$, and embeds diagonally the $(n,m)$-summands with $n<m$ into the sum of the summands $(n,m)$ and $(m,n)$.
\end{prop}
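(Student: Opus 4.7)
The plan is to analyze $r$ and $f$ summand-by-summand using the decomposition of Proposition~\ref{prop:geofixgenuine}, which splits $\THR(k)^{\phi\Z/2}$ as a $C_2$-spectrum into ``diagonal'' summands $\Sigma^{n\rho}\EM(k\otimes_Sk,w)$ and $C_2$-induced summands $\Sigma^{n+m}{C_2}_{+}\otimes \EM(k\otimes_Sk)$ for $n<m$. Since both $r$ and $f$ are additive, it suffices to identify the maps on each of these two classes of pieces.

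For the forgetful map $f$, the analysis is direct. On a diagonal summand $\Sigma^{n\rho}\EM(k\otimes_Sk,w)$, the underlying spectrum is $\Sigma^{2n}\EM(k\otimes_Sk)$, whose homotopy is concentrated in degree $2n$. The forgetful map from $\pi_{n+j}^{C_2}$ to $\pi_{n+j}$ is therefore zero for $j<n$, which kills all the source $(n,m)$-summands with $m<n$. In degree $2n$, the forgetful map reduces to the inclusion $(k\otimes_Sk)^{C_2}\hookrightarrow k\otimes_Sk$, giving the stated ``identity'' on the $(n,n)$-summand. For an induced summand $\Sigma^{n+m}{C_2}_{+}\otimes\EM(k\otimes_Sk)$, the restriction-induction adjunction identifies the $C_2$-fixed points in degree $n+m$ with $k\otimes_Sk$, and the forgetful map factors as the diagonal embedding $k\otimes_Sk\to(k\otimes_Sk)\oplus(k\otimes_Sk)$ into the $(n,m)$- and $(m,n)$-summands of the underlying spectrum.

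For the restriction map $r$, the induced summands are killed because $(C_{2+}\otimes Y)^{\phi C_2}\simeq 0$. On a diagonal summand $\Sigma^{n\rho}\EM(k\otimes_Sk,w)$, I would analyze the canonical map $(-)^{C_2}\to(-)^{\phi C_2}$ and then compose with the real-cyclotomic identification landing in $\THR(k)^{\phi\Z/2}$. On the $(n,n)$-summand in degree $2n$, the canonical map is the quotient $(k\otimes_Sk)^{C_2}\to(k\otimes_Sk)^{C_2}/\mathrm{Im}(1+w)$, and composing with $\phi^{-1}$ from Lemma~\ref{Lemma:Frobeniuslift} produces exactly the claimed formula, landing in the $(n,n)$-summand of the target. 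The remaining contributions from the diagonal summand, lying in degrees $n+j$ with $j<n$, must be carried onto the $(n,j)$-summands of the target under the real-cyclotomic identification.

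The main obstacle is verifying this last claim: that the cyclotomic identification shuffles the contributions from the fixed points of $\Sigma^{n\rho}\EM(k\otimes_Sk,w)$ onto the summands of $\THR(k)^{\phi\Z/2}$ in the way prescribed, sending the $(n,m)$-source summand with $m<n$ identically onto the $(n,m)$-summand of the target (which sits inside the induced piece $\Sigma^{n+m}{C_2}_+\otimes\EM(k\otimes_Sk)$). I would handle this by exploiting naturality in $k$ to reduce to the prime field $\F_2$, where $k\otimes_Sk=\F_2$ and every summand is a single copy, so the real-cyclotomic identification can be read off directly from the dihedral bar construction model for $\THR$ and from the explicit calculations in \cite[\S4]{geomTCR}. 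The formulas then transport back to an arbitrary perfect field $k$ by naturality of the decomposition of Proposition~\ref{prop:geofixgenuine} and of the cyclotomic structure.
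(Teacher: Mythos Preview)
Your treatment of $f$ is correct and essentially matches the paper's argument: on the induced summands it is the diagonal by the restriction--induction adjunction, and on the diagonal summands it is the restriction map in Bredon homology, which is the inclusion of fixed points in top degree and zero below.

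The gap is in your treatment of $r$ on the diagonal summands. You correctly identify the canonical map $(-)^{C_2}\to(-)^{\phi C_2}$ in degree $2n$ as the quotient by $\mathrm{Im}(1+w)$, but you never explain \emph{why} the subsequent cyclotomic identification contributes exactly $\phi^{-1}$. Saying ``composing with $\phi^{-1}$ produces exactly the claimed formula'' is circular: the content of the proposition is precisely that the cyclotomic equivalence, under the chosen decompositions, is $\phi^{-1}$. The same issue arises for the $(n,m)$-summands with $m<n$: the identification of these summands with $k\otimes_S k$ in Proposition~\ref{prop:geofixgenuine} already involves $\phi$, and the assertion that $r$ is the identity amounts to saying that the $\phi$ from the source identification cancels a $\phi$ coming from the cyclotomic map. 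This cancellation must be checked.

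Your proposed fix --- reduce to $\F_2$ by naturality --- does not work. Over $\F_2$ one has $k\otimes_S k=\F_2$ and $\phi$ is the identity, so any natural map that is correct over $\F_2$ (for instance the identity, or any Frobenius twist) would pass this test. Naturality in $k$ cannot distinguish between them, even with the $\EM k$-module structure, since that structure is itself twisted by the Frobenius. The paper instead traces the cyclotomic equivalence through an explicit diagram built from the Frobenius module structure $\EM k\to (\EM\underline{k})^{\phi\Z/2}$ and the norm diagonal $(N_e^{C_2}\EM(\varphi^\ast k))^{\phi C_2}\simeq \EM(\varphi^\ast k)$: chasing an elementary tensor $a\otimes b$ through the diagram, the norm contributes $N_e^{C_2}(b)$, and the Frobenius module action turns $b\cdot a$ into $ba^2$, so the composite sends $a\otimes b$ to $ba^2\otimes b=\phi(a\otimes b)$. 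This is the substantive computation that your proposal is missing.
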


\begin{proof}
 By \cite[Lemma 4.3]{geomTCR}, the map $r$ vanishes on the summands $(n,m)$ with $n<m$. By the same Lemma, under the identification of Proposition \ref{prop:geofixgenuine}, it is given on the other summands, for a fixed $n\geq 0$, by the outer composite of the maps in the diagram
\[ \hspace{-1.4cm}
 \xymatrix@C=-5pt@R=15pt{
(\Sigma^{n\rho}  \EM(k\otimes_Sk,w))^{C_2}\ar[d]^-{\simeq}_-{\text{Prop}\ \ref{prop:geofixgenuine}}
 \ar[r]&
 \Sigma^{n}  \EM(k\otimes_Sk,w)^{\phi C_2}\ar[d]^{\simeq}
 &
\Sigma^n( \EM\underline{k}\underset{N_{e}^{C_2}\EM k}{\otimes}N_{e}^{C_2}\EM(\varphi^\ast k))^{\phi C_2}\ar[l]_-{\simeq}
  \\
\displaystyle  (\bigoplus_{0 \leq j < n} \Sigma^{n+j} \EM(k\otimes_Sk))\oplus \Sigma^{2n}  \EM(k\otimes_Sk)^{C_2}\ar[r]
&
\displaystyle  \bigoplus_{0\leq j}\Sigma^{n+j} \EM\big(\frac{(k\otimes_{S}k)^{C_2}}{Im(1+w)}\big)
&
  \Sigma^{n}  (\EM\underline{k})^{\phi C_2}\underset{(N_{e}^{C_2}\EM k)^{\phi C_2}}{\otimes}(N_{e}^{C_2}\EM(\varphi^\ast k))^{\phi C_2}\ar[u]^-{\simeq}
  \\
 &&
 \Sigma^{n}  (\EM\underline{k})^{\phi C_2}\underset{\EM k}{\otimes}\EM(\varphi^\ast k)\ar[d]^{\simeq}\ar[u]^-{\simeq}
  \\
  \displaystyle  \bigoplus_{j\geq 0}\Sigma^{n+j} \EM(k\otimes_{S}k) \ar[r]^-{=}
  &
  \displaystyle  \bigoplus_{j\geq 0}\Sigma^{n+j} (\EM(\varphi^\ast k))\underset{\EM k}{\otimes}(\EM(\varphi^\ast k))\ar[r]^-{\simeq}
  &
   \displaystyle  \Sigma^{n} ( \bigoplus_{j\geq 0}\Sigma^j \EM(\varphi^\ast k))\underset{\EM k}{\otimes}(\EM(\varphi^\ast k)).
 }
 \]
 Here, the left map on the top row is the canonical map, and the right map on the top row is the equivalence of the proof of Proposition \ref{prop:geofixgenuine}. In the right column, the top vertical map is the monoidality of the geometric fixed points, the second map is the diagonal equivalence, and the third one is the splitting induced by the Frobenius module structure. The two bottom horizontal maps are the canonical equivalences.
 
Let us now consider the top left square. Its right vertical equivalence is given by splitting $\EM(k\otimes_Sk,w)^{\phi C_2}$ as the sum of its homotopy groups using the $\EM k$-module induced by the map $k\to k\otimes_Sk$ as we did in Proposition \ref{prop:geofixgenuine} for $\EM(k\otimes_Sk,w)^{C_2}$, and then by identifying these homotopy groups with the homology of the chain complex 
\[
0\xleftarrow{}(k\otimes_Sk)^{C_2}\xleftarrow{1+w}k\otimes_Sk\xleftarrow{1+w}k\otimes_Sk\xleftarrow{1+w}\cdots\ ,
\]
 where the first non-zero group on the left is in degree zero.
 The horizontal map on the second row sends the summand $j<n$ to the summand $j$ via $\phi$, and it maps the last summand to the summand $j=2n$ via the projection map (here $\phi$ appears because we used it to identify the homotopy groups of the source of the map in the proof of Proposition \ref{prop:geofixgenuine}).
 The square commutes by the naturality of the canonical map from fixed points to geometric fixed points.

 Thus, the identification of the map $r$ follows once we prove that the equivalence from the bottom left corner of the diagram to the second entry of the second row is the map $\phi$ on homotopy groups. Let $a,b\colon \mathbb{S}\to \EM k$, so that the suspension of $a\otimes b\colon \mathbb{S}\to \EM(k\otimes_S k)$ is a generator of a homotopy group of  the bottom left entry of the diagram. The composite of the equivalences up to the top right corner of the diagram sends $a\otimes b$ to the element of the homotopy group represented by $a\otimes N_e^{C_2}(b)$. The remaining two equivalences send this to the element represented by $b\cdot a\otimes b$, where the multiplication is with respect to the $k$-module action on $\varphi^\ast k$, and this is precisely $ba^2\otimes b=\phi(a\otimes b)$.

The identification of $f$ is simpler: by \cite[Lemma 4.3]{geomTCR}, it is the diagonal on the summands $(n,m)$ with $n<m$. The identification on the other summands  follows from the fact that the restriction map
\[\res^{C_2}_e \colon H^{C_2}_*(S^{n\rho}; (k\otimes_S k,w)) \to H_*(S^{2n};k\otimes_S k) \]
is the inclusion of fixed points in degree $*=2n$, and zero otherwise.
\end{proof}

\begin{cor}\label{cor:geomTCRk}
For every field $k$ of characteristic $2$, and every integer $l\geq 0$, there is an exact sequence
\[
0\to\pi_{2l} \TCR(k;2)^{\phi\Z/2}\to (k\otimes_S k)^{C_2}\xrightarrow{\pi-\phi}(k\otimes_S k)^{C_2}/Im(1+w)\to\pi_{2l-1} \TCR(k;2)^{\phi\Z/2}\to 0
\]
where $\pi$ quotients the image of $1+w$, and $\phi$ is the isomorphism of Lemma \ref{Lemma:Frobeniuslift} restricted to the fixed points. By Kato's calculation \cite[Theorem (1)]{Kato}, this identifies $\pi_{2l} \TCR(k;2)^{\phi\Z/2}$ with the symmetric Witt group of $k$, and $\pi_{2l-1} \TCR(k;2)^{\phi\Z/2}$ with the quadratic Witt group of $k$.
\end{cor}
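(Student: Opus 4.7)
I would start from the equivalence
\[
\TCR(k;2)^{\phi\Z/2}\simeq \mathrm{eq}\bigl((\THR(k)^{\phi\Z/2})^{C_2}\rightrightarrows \THR(k)^{\phi\Z/2}\bigr)
\]
recalled in \S\ref{prelim} (with the two maps being $r$ and $f$), interpret it as a fibre sequence of spectra, and analyse the associated long exact sequence in homotopy using the explicit descriptions of source, target, and of the maps $r$ and $f$ provided by Propositions \ref{prop:geofixgenuine} and \ref{prop:lowerFR}.

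The central observation is that $r-f$ restricts to an isomorphism on all ``off-diagonal'' direct summands, i.e.\ those indexed by $(n,m)$ with $n\neq m$. Concretely, for each pair $a>b\geq 0$ with $a+b=\ast$, Proposition \ref{prop:lowerFR} says that the two source summands $(a,b)$ and $(b,a)$ (both copies of $k\otimes_Sk$) map into the two target summands $(a,b)$ and $(b,a)$ (both copies of $k\otimes_Sk$) by the block
\[
\begin{pmatrix} 1 & -1 \\ 0 & -1 \end{pmatrix}
\]
(the $+1$ in the $(a,b)$-column is $r$ on the $(a,b)$-source, while the $-1$ column is $-f$ on the $(b,a)$-source applied diagonally into both target summands). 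Its determinant is $-1$, so the block is invertible, and the direct sum of all such blocks is an isomorphism.

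In odd degree $\ast=2l-1$ there is no diagonal summand, so $r-f$ is a full isomorphism, and the long exact sequence cleanly splits between even and odd degrees. In even degree $\ast=2l$, the only remaining contribution to $\ker(r-f)$ and $\coker(r-f)$ comes from the diagonal piece at $(l,l)$, which by Proposition \ref{prop:lowerFR} sends $(k\otimes_Sk)^{C_2}$ to $k\otimes_Sk$ by $\phi^{-1}\pi-\iota$, where $\iota$ is the canonical inclusion of $C_2$-invariants. The long exact sequence therefore collapses to four-term exact sequences
\[
0\to \pi_{2l}\TCR(k;2)^{\phi\Z/2}\to (k\otimes_Sk)^{C_2}\xrightarrow{\phi^{-1}\pi-\iota} k\otimes_Sk\to \pi_{2l-1}\TCR(k;2)^{\phi\Z/2}\to 0,
\]
one for each $l\geq 0$. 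Finally, post-composing the central map with the isomorphism $\phi\colon k\otimes_Sk\xrightarrow{\cong}(k\otimes_Sk)^{C_2}/Im(1+w)$ of Lemma \ref{Lemma:Frobeniuslift} turns $\phi^{-1}\pi-\iota$ into $\pi-\phi$, yielding the sequence in the statement; the identification with the Witt groups is then immediate from \cite[Theorem (1)]{Kato}. The only delicate point in the argument is correctly accounting for the ``cross'' contribution of $-f$ from the source $(b,a)$ to the target $(a,b)$ for $a>b$, which is what makes the off-diagonal $2\times 2$ blocks invertible and is the reason the long exact sequence decouples in odd degrees; the rest is direct bookkeeping.
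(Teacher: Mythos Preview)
Your proposal is correct and follows essentially the same route as the paper's proof. The paper likewise observes that $r-f$ is an isomorphism on the off-diagonal summands (without writing out the $2\times 2$ block you display), reduces to the diagonal contribution $\iota-\phi^{-1}\pi$, and then post-composes with $\phi$ to obtain $\pi-\phi$; the only extra remark the paper makes is that Kato's sequence is stated for $k\otimes_S k\to (k\otimes_S k)/Im(1+w)$ rather than for the $C_2$-invariants, and one must note that restricting and corestricting does not change the kernel and cokernel---a point you pass over but which is indeed straightforward.
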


\begin{proof}
By Proposition \ref{prop:lowerFR}, the map $r-f$ is an isomorphism in $\pi_\ast$ when restricted and corestricted to the summands $(n,m)$ with $n\neq m$. It is therefore an isomorphism in odd degrees, and its long exact sequence decomposes into exact sequences
\[
0\to\pi_{2l} \TCR(k;2)^{\phi\Z/2}\to  (k\otimes_S k)^{C_2}\oplus\bigoplus_{\begin{smallmatrix}(n, m)\\
n+m=2l
\\
n, m \geq 0\\
n\neq m\end{smallmatrix}}  k\otimes_S k\xrightarrow{r-f} \bigoplus_{\begin{smallmatrix}(n,m)\\
n+m=2l\\ n, m \geq 0\end{smallmatrix}} k\otimes_S k\to \pi_{2l-1} \TCR(k;2)^{\phi\Z/2}\to 0
\] 
for every $l\geq 0$. Again by Proposition \ref{prop:lowerFR}, the kernel and cokernel of $r-f$ are the same as those of 
\[\iota-\phi^{-1}\pi\colon (k\otimes_S k)^{C_2}\longrightarrow k\otimes_S k\]
where $\iota$ is the fixed-points inclusion.
 These are respectively isomorphic to the kernel and cokernel of $\pi-\phi$, by applying the isomorphism $\phi$ of Lemma \ref{Lemma:Frobeniuslift} to the target.

In \cite{Kato}, Kato exhibits an exact sequence 
\[
0\to \W^s(k)\to k\otimes_S k\xrightarrow{\pi-\phi}(k\otimes_S k)/Im(1+w)\to \W^q(k)\to 0
\]
where $\W^s(k)$ and $\W^q(k)$ are respectively the symmetric and quadratic Witt groups of $k$. It is easy to see that the kernel and cokernel of $\pi-\phi$ agree with those above, by restricting and corestricting the maps to the fixed points.
\end{proof}

\begin{rem}\label{rem:L}
Corollary \ref{cor:geomTCRk} in particular shows that the homotopy groups of the spectrum $\TCR(k;2)^{\phi\Z/2}$ agree with the homotopy groups of the cofibre $\Lt^n(k)$ of the canonical map
\[
\Lt^q(k)\longrightarrow\Lt(\Mod^\omega_A,\text{\Qoppa}^{gs}_k)
\]
induced by the symmetrisation map from the quadratic to the genuine Poincar\'e structure, as defined in \cite{9I,9II,9III}. This confirms a conjecture of Nikolaus, proved in \cite{HNS}, in the case of fields.
This is because the even and odd homotopy groups of $\Lt^n(k)$, in degrees greater or equal to $-1$, are respectively the Witt groups of symmetric and quadratic forms of $k$, as explained in \cite[Remark 4.6]{geomTCR}.
\end{rem}

Let us denote by $\pi\colon (k\otimes_Sk)^{C_2}\to (k\otimes_Sk)^{C_2}/Im(1+w)$ the projection map, so that for every $x\in (k\otimes_Sk)^{C_2}$ we can consider the element $(\phi^{-1}\pi)(x)$ of $k\otimes_S k$.
 For every $n\geq 0$, we define a subgroup of $(k\otimes_S k)^{C_2}$ by
\begin{align*}\phi^{n}\big((k\otimes_S k)^{C_2}\big)
:=\{&
x\in (k\otimes_S k)^{C_2}\ | \ \ \ (\phi^{-1}\pi)(x)\in (k\otimes_S k)^{C_2},  (\phi^{-1}\pi)^2(x)\in  (k\otimes_S k)^{C_2},
\\&
  (\phi^{-1}\pi)^3(x)\in  (k\otimes_S k)^{C_2},  \hdots
 , (\phi^{-1}\pi)^n(x) \in  (k\otimes_S k)^{C_2}\}
\end{align*}
where by convention $\phi^{0}((k\otimes_S k)^{C_2})=(k\otimes_S k)^{C_2}$.
 Thus, by construction, there is a well-defined map
\[
\phi^{-1}\pi\colon \phi^{n}\big((k\otimes_S k)^{C_2}\big)\longrightarrow \phi^{n-1}\big((k\otimes_S k)^{C_2}\big),
\]
for every $n\geq 1$, and a map
$
(\phi^{-1}\pi)^{n+1}\colon \phi^{n}\big((k\otimes_S k)^{C_2}\big)\to k\otimes_S k$. Let us consider the pullback
\[
\xymatrix{
\phi^{n}\big((k\otimes_S k)^{C_2}\big)\times_{k\otimes_S k}\phi^{n}\big((k\otimes_S k)^{C_2}\big)\ar[r]\ar[d] &\phi^{n}\big((k\otimes_S k)^{C_2}\big)\ar[d]^-{w(\phi^{-1}\pi)^{n+1}}
 \\
\phi^{n}\big((k\otimes_S k)^{C_2}\big)\ar[r]_-{(\phi^{-1}\pi)^{n+1}}&k\otimes_Sk
}
\]
where we keep in mind that one of the two maps which we pull back is composed with the involution $w$ of $k\otimes_S k$.

\begin{theorem} \label{thm:geomTRn} Let $k$ be a field of characteristic $2$. For any $l \geq 1$, there is an isomorphism
\[\pi_\ast {\TRR^{l+1}(k;2)}^{{\phi \Z/2}} 
\cong
\left\{ 
\begin{array}{ll}\displaystyle
\big(\bigoplus_{\begin{smallmatrix}(n,m)\\
n+m=\ast
\\
n, m \geq 0\\
n\neq m\end{smallmatrix}}
k\otimes_S k\big)\oplus   \big(
\phi^{l-1}\big((k\otimes_S k)^{C_2}\big)\times_{k\otimes_S k}\phi^{l-1}\big((k\otimes_S k)^{C_2}\big)
\big)
& \ast \mbox{even}
\\
\displaystyle
\bigoplus_{\begin{smallmatrix}(n,m)\\
n+m=\ast
\\
n, m \geq 0\\
n\neq m\end{smallmatrix}}k\otimes_S k
& \ast \mbox{odd}
\end{array}
\right.
\]
In particular, in degree zero, we obtain a ring isomorphism
\[
\pi_0 {\TRR^{l+1}(k;2)}^{{\phi \Z/2}} \cong \phi^{l-1}\big((k\otimes_S k)^{C_2}\big)\times_{k\otimes_S k}\phi^{l-1}\big((k\otimes_S k)^{C_2}\big).
\]
The maps $R,F \colon {\TRR^{l+1}(k;2)}^{{\phi \Z/2}}  \to {\TRR^{l}(k;2)}^{{\phi \Z/2}}$ and the Weyl action are described on homotopy groups as follows.

The map $R$ kills the $(n, m)$-summands with $n \neq m$, and in even degrees it sends an element $(x,y)$ of the right-hand pullback to $(\phi^{-1}\pi(x),\phi^{-1}\pi(y))$. 
 
The map $F$ kills the $(n, m)$-summands with $m<n$, embeds the $(n,m)$-summands with $n<m$ diagonally into the sum of the $(n,m)$ and $(m,n)$-summands, and in even degrees it sends an element $(x,y)$ of the right-hand pullback to $(x,x)$.
 
The Weyl action swaps the $(n,m)$-summand and the $(m,n)$-summand for all $n\neq m$, and in even degrees takes an element $(x,y)$ in the pullback to $(y,x)$.
\end{theorem}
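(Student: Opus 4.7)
The plan is to induct on $l \geq 1$, using the pullback description of \cite[Theorem 2.7]{geomTCR} recalled in \S\ref{prelim}, together with the computations of Propositions \ref{prop:geofixgenuine} and \ref{prop:lowerFR}. This pullback expresses $\TRR^{l+1}(k;2)^{\phi\Z/2}$ as the homotopy pullback of
\[
(r,\sigma_1 r)\colon (\THR(k)^{\phi\Z/2})^{C_2}\times(\THR(k)^{\phi\Z/2})^{C_2}\longrightarrow \THR(k)^{\phi\Z/2}\times \THR(k)^{\phi\Z/2}
\]
along the vertical map $(F^{l-1},\sigma_1 F^{l-1}\sigma_l)\colon \TRR^l(k;2)^{\phi\Z/2}\to \THR(k)^{\phi\Z/2}\times\THR(k)^{\phi\Z/2}$, and the strategy is to extract the homotopy groups from the resulting Mayer-Vietoris long exact sequence once $F^{l-1}$ has been understood inductively.

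For the base case $l=1$, the map $F^{l-1}=F^0$ is the identity on $\THR(k)^{\phi\Z/2}$, so the computation is a direct consequence of Proposition \ref{prop:lowerFR}: the map $r$ is the identity on the $(n,m)$-summands with $n>m$, equals $\phi^{-1}\pi$ on the diagonal summands $(n,n)$, and vanishes on the summands with $n<m$. Combined with the twist by $\sigma_1$ on the second copy, the connecting maps in the Mayer-Vietoris sequence vanish thanks to Lemma \ref{Lemma:Frobeniuslift}, which ensures surjectivity of $\phi^{-1}\pi$. The long exact sequence therefore breaks into short exact sequences and yields the claimed formula, with $\phi^0((k\otimes_S k)^{C_2})=(k\otimes_S k)^{C_2}$; the Weyl action is induced by $\sigma_1$ and swaps the two copies as claimed.

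For the inductive step, I would assume the theorem for $\TRR^l$ with its explicit operator descriptions, and iterate the stated formula for $F$ to obtain an explicit description of $F^{l-1}\colon \TRR^l(k;2)^{\phi\Z/2}\to \THR(k)^{\phi\Z/2}$. Combining this with the description of $r$ from Proposition \ref{prop:lowerFR}, the Mayer-Vietoris sequence for the pullback again splits into short exact sequences summand by summand. The even-degree zero kernel is exactly the subgroup of pairs $(x,y)\in(k\otimes_S k)^{C_2}\times(k\otimes_S k)^{C_2}$ whose $\phi^{-1}\pi$-images satisfy the pullback condition defining $\pi_0\TRR^l(k;2)^{\phi\Z/2}$, which unravels to $\phi^{l-1}((k\otimes_S k)^{C_2})\times_{k\otimes_S k}\phi^{l-1}((k\otimes_S k)^{C_2})$. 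The descriptions of $R$, $F$, and the Weyl action then follow from the pullback structure: $R$ is the right projection composed with the structure of $\TRR^l$ (so it kills off-diagonal summands and applies $\phi^{-1}\pi$ coordinatewise on the pullback piece), $F$ descends from $F$ on the $C_2$-fixed points of $\THR(k)^{\phi\Z/2}$, and the Weyl action is the residual action of $D_{2^{l+1}}/C_{2^l}$, which swaps the two factors of the pullback.

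The main obstacle is the inductive verification that the Mayer-Vietoris sequences split into short exact sequences; this amounts to a careful matching of the image of $F^{l-1}$ with the image of $r$ on each of the three types of summands, and relies crucially on Lemma \ref{Lemma:Frobeniuslift} to control the $\phi^{-1}\pi$ components at every level. The nested condition $(\phi^{-1}\pi)^n(x)\in (k\otimes_S k)^{C_2}$ in the definition of $\phi^{l-1}((k\otimes_S k)^{C_2})$ is precisely what the iterated pullback produces, so the formula stated in the theorem is the natural output of the induction.
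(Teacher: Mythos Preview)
Your proposal is correct and follows essentially the same route as the paper: induction on $l$ via the pullback square of \cite[Theorem 2.7]{geomTCR}, with the base case handled by Proposition \ref{prop:lowerFR} and the vanishing of the Mayer-Vietoris connecting map reduced to a surjectivity statement (ultimately resting on Lemma \ref{Lemma:Frobeniuslift}). The paper's own argument is equally terse at the inductive step, deferring the bookkeeping to the analogous computation in \cite[Theorem 4.7]{geomTCR}, so your sketch is at the same level of detail.
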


\begin{proof} By \cite[Theorem 2.7]{geomTCR} and \S\ref{prelim}, for every $l\geq 1$, there is a pullback square of ring spectra
\[
\xymatrix@C=60pt{
\TRR^{l+1}(k;2)^{\phi \Z/2}\ar[r]^-R\ar[d]_{(c F^{l-1}, c F^{l-1}\sigma_{l+1})}
&
\TRR^{l}(k;2)^{\phi\Z/2}\ar[d]^{(F^{l-1}, \sigma_1F^{l-1}\sigma_{l})}
\\
(\THR(k)^{\phi \Z/2})^{C_2}\times (\THR(k)^{\phi \Z/2})^{C_2}\ar[r]^-{r\times \sigma_{1}r}&\THR(k)^{\phi \Z/2}\times \THR(k)^{\phi \Z/2}
}
\]
where $\sigma_l$ denotes the action of the generator of the Weyl group of $\Z/2$ in $D_{2^{l}}/C_{2^{l-1}}$, which is of order $2$. 
We prove, by induction on $l$, that the connecting homomorphism in the Mayer-Vietoris long exact sequence of this pullback square vanish, and therefore that the square gives a pullback square of homotopy groups. One can see, again by induction, that these pullbacks of homotopy groups indeed match the description of the homotopy groups of the Theorem. However, we will need to prove the vanishing of the connecting maps and the explicit description of the pullback in the same induction step.

For $l=1$, the pullback above describing ${\TRR^2(k;2)}^{{\phi \Z/2}}$ is equivalent to the pullback 
\[(\THR(k)^{\phi\Z/2})^{C_2}{\times_{\THR(k)^{\phi\Z/2}}}
(\THR(k)^{\phi\Z/2})^{C_2}\]
 along the maps $r$ and $\sigma_1 r$ (since the right vertical map in the square above is the diagonal for $l=1$). By the characterisation of $r$ of Proposition \ref{prop:lowerFR}, the map $r-\sigma_1r$ in the corresponding Mayer-Vietoris sequence is surjective in every degree. Therefore, there is a pullback
\[
\pi_\ast {\TRR^2(k;2)}^{{\phi \Z/2}}\cong 
\big(\big(\bigoplus_{\begin{smallmatrix}(n,m)\\
n, m \geq 0,n+m=\ast
\\
(n\neq m)\end{smallmatrix}}
k\otimes_S k\big)\oplus  (k\otimes_S k)^{C_2})\big)
\ {}_{r}\times_{\sigma_1 r}
\big(\big(\bigoplus_{\begin{smallmatrix}(n,m)\\
n, m \geq 0,n+m=\ast
\\
(n\neq m)\end{smallmatrix}}
k\otimes_S k\big)\oplus  (k\otimes_S k)^{C_2})\big)
\]
in even degrees, and an analogous pullback without the summands $(k\otimes_S k)^{C_2}$ in odd degrees. Here the subscripts of the product indicate which maps we are pulling back along.
By the description of $r$ from Proposition \ref{prop:lowerFR}, this is isomorphic to the pullback of the statement of the Theorem. The characterisation of the maps $R,F$ and of the Weyl action follows by the description of the corresponding maps of \cite[Theorem 2.7]{geomTCR}.

Now let $l \geq 2$, and suppose that the decomposition above holds for $\pi_\ast {\TRR^h(k;2)}^{{\phi \Z/2}}$ for all $h \leq l$, and that the maps $R,F\colon {\TRR^{h}(k;2)}^{{\phi \Z/2}}\to {\TRR^{h-1}(k;2)}^{{\phi \Z/2}}$ and $\sigma_h$ are given in homotopy groups by the formulas of the Theorem. We will show that the same holds for $l+1$. The Mayer-Vietoris sequence of the pullback square above is then (we recall that $\sigma_1F =F$)
\[\hspace{-2.8cm}
\xymatrix@C=-5pt@R=25pt{
\dots_{\ }\ar[r]^-{\partial}&
\pi_\ast {\TRR^{l+1}(k;2)}^{{\phi \Z/2}} \ar[d]^-{(c F^{l-1}, c F^{l-1}\sigma_{l+1},R)}
\\
\displaystyle\big[\big(\bigoplus_{\begin{smallmatrix}(n,m)\\
n+m=\ast
\\
n, m \geq 0
\\
n\neq m\end{smallmatrix}}
k\otimes_S k\big)\oplus  (k\otimes_S k)^{C_2}\big]\oplus 
&
\displaystyle\big[\big(\bigoplus_{\begin{smallmatrix}(n,m)\\
n+m=\ast
\\
n, m \geq 0
\\
n\neq m\end{smallmatrix}}
k\otimes_S k\big)\oplus  (k\otimes_S k)^{C_2}\big]\ar[d]^-{r\oplus \sigma_1 r-(F^{l-1},F^{l-1}\sigma_l)}\oplus 
&
\displaystyle
\big[\big(\bigoplus_{\begin{smallmatrix}(n,m)\\
n+m=\ast
\\
n, m \geq 0
\\
n\neq m\end{smallmatrix}}
k\otimes_S k\big)\oplus   \big(\phi^{l-1}((k\otimes_S k)^{C_2})\underset{k\otimes_S k}{\times}\phi^{l-1}((k\otimes_S k)^{C_2}) \big)\big]
\\
&
\displaystyle(\bigoplus_{\begin{smallmatrix}(n,m)\\
n+m=\ast\end{smallmatrix}}k\otimes_S k)
\oplus
(\bigoplus_{\begin{smallmatrix}(n,m)\\
n+m=\ast\end{smallmatrix}}k\otimes_S k)\ar[r]^-\partial
&\dots
}
\]
for $\ast$ even, and a similar expression without the fixed points terms for $\ast$ odd.
An argument completely analogous that of the proof of \cite[Theorem 4.7]{geomTCR} shows that the bottom vertical map is surjective, and identifies its kernel with the formula of the Theorem. The description of the maps $R$ and $F$ also follows by a similar argument. 
\end{proof}

\begin{rem}\label{rem:explicitiso}
From the proof of Theorem \ref{thm:geomTRn} we see that the isomorphism for the $0$-th homotopy group is explicitly given by the map
\[
(F^l,F^l\sigma)\colon \pi_0 {\TRR^{l+1}(k;2)}^{{\phi \Z/2}}\stackrel{\cong}{\longrightarrow} \phi^{l-1}\big((k\otimes_S k)^{C_2}\big)\times_{k\otimes_S k}\phi^{l-1}\big((k\otimes_S k)^{C_2}\big)
\]
where we implicitly identify the target $\pi_0 {\THR(k;2)}^{{\phi \Z/2}}$ of $F^l$ with $k\otimes_Sk$, and $\sigma:=\sigma_l$ denotes the Weyl action on the source of this map. We can see this directly as follows. Let us express the source of this map as the iterated pullback
\[
\pi_0 {\TRR^{l+1}(k;2)}^{{\phi \Z/2}}\stackrel{\cong}{\longrightarrow}(k\otimes_S k)^{C_2}{}_r\!\times_f\dots {}_r\!\times_f (k\otimes_S k)^{C_2}{}_r\!\times_{w r} (k\otimes_S k)^{C_2}{}_{f}\!\times_{wr}\dots{}_{f}\!\times_{wr} (k\otimes_S k)^{C_2}
\]
as in \cite[Remark 2.8]{geomTCR}, where the pullback has $2l$ factors, and the isomorphism is given by the map $(F^l,F^{l-1}R,F^{l-2}R^2,\dots,F R^{l-1}, F\sigma R^{l-1},\dots, F^{l-2}\sigma R^2,  F^{l-1}\sigma R,  F^{l}\sigma)$. We still have a pullback after applying $\pi_0$ because the connecting maps of the Mayer-Vietoris sequences vanish as seen in the proof of Theorem \ref{thm:geomTRn}. By Proposition \ref{prop:lowerFR}, $r=\phi^{-1}\pi$ and $f$ is the fixed points inclusion $(k\otimes_S k)^{C_2}\to k\otimes_S k$. Since $f$ is injective, the projection onto the first and last factors defines an isomorphism between this pullback and  $\phi^{l-1}\big((k\otimes_S k)^{C_2}\big)\times_{k\otimes_S k}\phi^{l-1}\big((k\otimes_S k)^{C_2}\big)$, and the composite map is indeed $(F^l,F^l\sigma)$.
\end{rem}

\subsection{The canonical generators of TRR}

We recall, that for every commutative ring $R$, the ring of $2$-typical $(n+1)$-truncated Witt vectors $\W_{\langle 2^{n}\rangle}(R)$ is the set $R^{\times n+1}$ equipped with the unique functorial ring structure which makes the Witt polynomials into ring homomorphisms (see e.g. \cite[\S 1]{Hbig}). Additively, it is generated by the elements
\[
V^{n-i}\tau_i(a)=(0,\dots,0,a,0,\dots,0),
\]
where the entry $a$ is in the $(n-i+1)$-st component, $a$ ranges through the elements of $R$ and $i=0,\dots,n$.

The goal of this section is to define canonical generators for the pullback of Theorem \ref{thm:geomTRn}, analogous to the generators $V^{n-i}\tau_i(a)$ of the $(n+1)$-truncated Witt vectors, thus providing generators for $\pi_0 {\TRR^{n+1}(k;2)}^{\phi \Z/2}$ analogous to those of $\W_{\langle 2^{n}\rangle}(k)$.

Recall that, for every elementary tensor $a\otimes b\in k\otimes_S k$, we have defined 
\[\phi(a\otimes b):=ba^2\otimes b\in (k\otimes_S k)^{C_2}\]
(see Lemma \ref{Lemma:Frobeniuslift}). Similarly, for any elementary tensor $a\otimes b\in k\otimes_S k$ and $n\geq 0$, let us iterate this construction and define
\[
\phi^n(a\otimes b):=b^{2^{n}-1}a^{2^n}\otimes b\in \phi^{n-1}\big((k\otimes_S k)^{C_2}\big),
\]
as well as $\tau_0(a\otimes b):=a\otimes b\in k\otimes_S k$. We will show in Proposition \ref{prop:genTRR} that $\phi^n(a\otimes b)$ indeed belongs to $\phi^{n-1}\big((k\otimes_S k)^{C_2}\big)$, and as a consequence the pairs defined by
\begin{align*}
\tau_n(a\otimes b)&:=(\phi^n(a\otimes b),\phi^n(b\otimes a)) \\
V^{n-i}\tau_i(a\otimes b)&:=(\phi^i(a\otimes b)+\phi^i(b\otimes a),0)\\
\sigma V^{n-i}\tau_i(a\otimes b)&:=(0,\phi^i(a\otimes b)+\phi^i(b\otimes a))
\end{align*}
for every $0\leq i<n$ belong to the pullback $\phi^{n-1}((k\otimes_S k)^{C_2})\underset{k\otimes_S k}{\times}\phi^{n-1}((k\otimes_S k)^{C_2})$.
Here we recall that the pullback is taken with respect to the maps $(\phi^{-1}\pi)^n$ and $w(\phi^{-1}\pi)^n$, see the diagram above Theorem \ref{thm:geomTRn}, and $\sigma$ is the Weyl action which switches the two pullback components.

\begin{prop}\label{prop:genTRR} Let $k$ be a field of characteristic $2$.
For every $n\geq 0$, the subgroup $\phi^{n}\big((k\otimes_S k)^{C_2}\big)$ of $k\otimes_Sk$ is generated by elements of the form $\phi^{n+1}(a\otimes b)$ and $\phi^{i}(a\otimes b)+\phi^i(b\otimes a)$, for $0\leq i\leq n$ and $a\otimes b\in k\otimes_S k$.

It follows that, for every $n\geq 1$,
\[
 \pi_0\TRR^{n+1}(k;2)^{{\phi \Z/2}}\cong \phi^{n-1}((k\otimes_S k)^{C_2})\underset{k\otimes_S k}{\times}\phi^{n-1}((k\otimes_S k)^{C_2})
\]
is generated by the elements $\tau_n(a\otimes b)$, $V^{n-i}\tau_i(a\otimes b)$ and $\sigma V^{n-i}\tau_i(a\otimes b)$, for $0\leq i\leq n-1$ and $a\otimes b\in k\otimes_Sk$.
\end{prop}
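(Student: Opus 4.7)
The plan is to prove the first claim by induction on $n \geq 0$, and then to deduce the second claim from the first using the pullback description of $\pi_0 \TRR^{n+1}(k;2)^{\phi\Z/2}$ from Theorem \ref{thm:geomTRn} and Remark \ref{rem:explicitiso}. For the base case $n = 0$, we have $\phi^0((k \otimes_S k)^{C_2}) = (k \otimes_S k)^{C_2}$, and Lemma \ref{Lemma:Frobeniuslift} identifies $\phi$ with an additive isomorphism $k \otimes_S k \xrightarrow{\cong} (k \otimes_S k)^{C_2}/Im(1+w)$. Hence $(k \otimes_S k)^{C_2}$ is generated by the canonical $C_2$-invariant lifts $\phi(a \otimes b) = ba^2 \otimes b$ together with the elements $a \otimes b + b \otimes a = \phi^0(a \otimes b) + \phi^0(b \otimes a)$ spanning $Im(1+w)$.

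For the inductive step $n-1 \Rightarrow n$, the natural tool is the short exact sequence
\[
0 \longrightarrow Im(1+w) \longrightarrow \phi^n\bigl((k \otimes_S k)^{C_2}\bigr) \xrightarrow{\phi^{-1}\pi} \phi^{n-1}\bigl((k \otimes_S k)^{C_2}\bigr) \longrightarrow 0.
\]
The inclusion $Im(1+w) \subseteq \phi^n((k \otimes_S k)^{C_2})$ holds for every $n$ because $\phi^{-1}\pi$ vanishes on $Im(1+w) = \ker \pi$, and conversely any element of $\phi^n((k \otimes_S k)^{C_2})$ in $\ker(\phi^{-1}\pi)$ lies in $\ker \pi = Im(1+w)$. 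Surjectivity on the right is established by lifting the inductive generators termwise: $\phi^n(a \otimes b)$ lifts to $\phi^{n+1}(a \otimes b)$, and $\phi^i(a \otimes b) + \phi^i(b \otimes a)$ for $0 \leq i \leq n-1$ lifts to $\phi^{i+1}(a \otimes b) + \phi^{i+1}(b \otimes a)$, both relying on the defining identity $\phi^{-1}\pi \circ \phi^{j+1} = \phi^j$ on elementary tensors. Combining these lifts with the kernel generators $\phi^0(a \otimes b) + \phi^0(b \otimes a)$ yields the required generating set.

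To establish the second claim, I would use Theorem \ref{thm:geomTRn} and Remark \ref{rem:explicitiso} to identify $\pi_0 \TRR^{n+1}(k;2)^{\phi\Z/2}$ with the pullback $P_n := \phi^{n-1}((k \otimes_S k)^{C_2}) \times_{k \otimes_S k} \phi^{n-1}((k \otimes_S k)^{C_2})$ formed along $(\phi^{-1}\pi)^n$ and $w \circ (\phi^{-1}\pi)^n$. Given $(x, y) \in P_n$, the first claim lets me write $x = \sum_j \phi^n(a_j \otimes b_j) + r_x$ where $r_x$ is a sum of symmetric terms $\phi^i(c \otimes d) + \phi^i(d \otimes c)$ with $i \leq n-1$. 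Since $(\phi^{-1}\pi)^n$ sends $\phi^n(a \otimes b)$ to $a \otimes b$ and kills every such symmetric term, the pullback condition forces $(\phi^{-1}\pi)^n(y) = \sum_j b_j \otimes a_j$. Subtracting $\sum_j \tau_n(a_j \otimes b_j)$ from $(x, y)$ then produces a pullback element whose two coordinates both lie in $\ker(\phi^{-1}\pi)^n$; such an element is a sum of symmetric terms in each coordinate and decomposes additively as a sum of $V^{n-i}\tau_i(c \otimes d)$ generators in the first coordinate and $\sigma V^{n-i}\tau_i(c' \otimes d')$ generators in the second.

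The main obstacle is the careful verification of the identity $\phi^{-1}\pi(\phi^{j+1}(a \otimes b)) = \phi^j(a \otimes b)$ in $k \otimes_S k$, which underlies both the inductive lifting step and the $(\phi^{-1}\pi)^n$-computation in the pullback reduction. This is a direct manipulation, but it must be tracked carefully: $\phi$ on elementary tensors respects the defining bilinear relations of $k \otimes_S k$ only modulo $Im(1+w)$, so the iterated formulas $\phi^j(a \otimes b)$ must be understood as specific $C_2$-invariant lifts rather than as literal iterated applications of $\phi$.
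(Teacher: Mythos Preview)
Your proof of the first claim is essentially identical to the paper's: both induct on $n$ via the short exact sequence with kernel $Im(1+w)$ and quotient map $\phi^{-1}\pi$ (the paper phrases the quotient as an isomorphism $\phi\colon \phi^{n-1}\cong \phi^n/Im(1+w)$, which is the inverse of your map).

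For the second claim your organisation differs from the paper's: the paper runs a parallel induction on the pullbacks $P_n$ via exact sequences of the same shape, whereas you attempt a direct decomposition using the first claim. Your argument has a small gap. After subtracting $\sum_j \tau_n(a_j\otimes b_j)$ you obtain a second coordinate $y' \in \phi^{n-1}((k\otimes_S k)^{C_2})$ with $(\phi^{-1}\pi)^n(y')=0$, and you assert that such an element ``is a sum of symmetric terms''. This is true but not immediate: writing $y' = \sum_l \phi^n(c_l\otimes d_l) + (\text{symmetric terms})$ via the first claim gives $\sum_l c_l\otimes d_l = 0$ in $k\otimes_S k$, and you need to conclude that $\sum_l \phi^n(c_l\otimes d_l)$ itself lies in the span of the symmetric generators. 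Since $\phi^n$ is not additive on $k\otimes_S k$, this is not automatic. What is needed is the identification
\[
\ker\bigl((\phi^{-1}\pi)^n\big|_{\phi^{n-1}((k\otimes_S k)^{C_2})}\bigr)=\Bigl\langle \phi^i(a\otimes b)+\phi^i(b\otimes a)\ :\ 0\le i\le n-1\Bigr\rangle,
\]
which follows by iterating your short exact sequence (each step contributes one copy of $Im(1+w)$) --- in other words, by the same induction again. Once you make this step explicit, your argument is complete and equivalent to the paper's.
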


\begin{proof}
Let us first show that the proposed generators belong to $\phi^{n}\big((k\otimes_S k)^{C_2}\big)$. For the first, we see that for every $1\leq j\leq n$ we have that
\[
(\phi^{-1}\pi)^{j}(\phi^{n+1}(a\otimes b))=\phi^{n+1-j}(a\otimes b),
\]
which belongs to $(k\otimes_S k)^{C_2}$ since $n+1-j\geq 1$. On the other hand, for all $0\leq i\leq n-1$, we have that
\[
(\phi^{-1}\pi)^{j}(\phi^{i}(a\otimes b)+\phi^i(b\otimes a))=\phi^{i-j}(a\otimes b)+\phi^{i-j}(b\otimes a)
\]
if $0\leq j\leq i$, which is a fixed point, and for $i<j\leq n$ this is 
\[
(\phi^{-1}\pi)^{j}(\phi^{i}(a\otimes b)+\phi^i(b\otimes a))=(\phi^{-1}\pi)^{j-i}(a\otimes b+b\otimes a)=0
\]
since $\pi$ quotients off the image of $1+w$. 

The proof that these elements generate $\phi^{n}\big((k\otimes_S k)^{C_2}\big)$ is by induction on $n$. For $n=0$, consider the exact sequence
\[
k\otimes_S k\xrightarrow{1+w}(k\otimes_S k)^{C_2}\longrightarrow (k\otimes_S k)^{C_2}/Im(1+w)\to 0.
\]
By Lemma \ref{Lemma:Frobeniuslift}, the right term is generated by the equivalence classes of the elements of the form $\phi(a\otimes b)$, and the image of $1+w$ is generated by the elements of the form $a\otimes b+b\otimes a$, which proves the claim.

Now suppose that the claim holds for $n-1$, and consider the exact sequence
\[
k\otimes_S k\xrightarrow{1+w} \phi^{n}((k\otimes_S k)^{C_2})\longrightarrow \phi^{n}((k\otimes_S k)^{C_2})/Im(1+w)\to 0.
\]
By an argument analogous to the proof of Lemma \ref{Lemma:Frobeniuslift}, $\phi$ defines an isomorphism between $\phi^{n-1}((k\otimes_S k)^{C_2})$ and $\phi^{n}((k\otimes_S k)^{C_2})/Im(1+w)$.  Thus, by the inductive assumption, the classes of $\phi^{n+1}(a\otimes b)$ and $\phi^{i}(a\otimes b)+\phi^i(b\otimes a)$, for $1\leq i\leq n$ and $a\otimes b\in k\otimes_S k$, generate the quotient.  The image of $1+w$ is generated by the elements of the form $a\otimes b+b\otimes a$, which concludes the induction.

The proof for $\pi_0\TRR^{n+1}(k;2)^{\phi\Z/2}$ is completely analogous, by induction on the exact sequences
\[
\xymatrix{
(k\otimes_S k)\oplus (k\otimes_S k)\ar[d]^-{(1+w,0)+(0,1+w)}
\\
\phi^{n-1}((k\otimes_S k)^{C_2})\underset{k\otimes_S k}{\times}\phi^{n-1}((k\otimes_S k)^{C_2})
\ar[d]^-{(\phi^{-1}\pi,\phi^{-1}\pi)}
\\
 \phi^{n-2}((k\otimes_S k)^{C_2})\underset{k\otimes_S k}{\times}\phi^{n-2}((k\otimes_S k)^{C_2})\ar[d]
 \\0
}\]
\end{proof}

Next, we want to understand the effect of the transfer and norm maps of $\TRR(k)$ under the isomorphism of Theorem \ref{thm:geomTRn}, and their relation to the generators of Proposition \ref{prop:genTRR}.  For every $0\leq h<l$, let \[\tran^{D_{2^l}}_{D_{2^{h}}}\colon \pi_0\TRR^{h+1}(k;2)^{\phi \Z/2}\to\pi_0\TRR^{l+1}(k;2)^{\phi \Z/2}\] be the transfer map associated to the subgroup inclusion $D_{2^h}\leq D_{2^{l}}$.

\begin{prop}\label{prop:transfers} For every $0< h<l$, the map $\tran^{D_{2^l}}_{D_{2^{h}}}$ corresponds, under the isomorphism of Theorem \ref{thm:geomTRn}, to the group homomorphism
\[
V^{l-h}\colon 
 \phi^{h-1}((k\otimes_S k)^{C_2})\underset{k\otimes_S k}{\times}\phi^{h-1}((k\otimes_S k)^{C_2})
\longrightarrow 
\phi^{l-1}((k\otimes_S k)^{C_2})\underset{k\otimes_S k}{\times}\phi^{l-1}((k\otimes_S k)^{C_2})
\]
which sends $(x,y)$ to $(x+y,0)$. For $h=0$, it corresponds to
the group homomorphism
\[
V^{l}\colon 
k\otimes_S k
\longrightarrow 
\phi^{l-1}((k\otimes_S k)^{C_2})\underset{k\otimes_S k}{\times}\phi^{l-1}((k\otimes_S k)^{C_2})
\]
which sends $a\otimes b$ to $(a\otimes b+b\otimes a,0)$.
\end{prop}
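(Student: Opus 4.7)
The plan is to verify the formula by applying the isomorphism $(F^l, F^l\sigma)$ of Remark~\ref{rem:explicitiso}. Writing $(F^h z, F^h\sigma z) = (x,y)$, the proposition reduces to the two equalities
\[
F^l \circ V^{l-h}(z) = x + y \quad\text{and}\quad F^l\sigma \circ V^{l-h}(z) = 0,
\]
where I use the transitivity $\tran^{D_{2^l}}_{D_{2^h}} = V^{l-h}$ for $V = \tran^{D_{2^n}}_{D_{2^{n-1}}}$. For $h=0$ the first equality becomes $F^l V^l(z) = (1+w)z$ on $z\in k\otimes_Sk$. The formula iterates consistently because $V(x+y,0) = (x+y,0)$ in the pullback, which is also compatible with the expected behaviour of the generators $V^{n-i}\tau_i$ of Proposition~\ref{prop:genTRR}.

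The first equality follows from the identity $FV = \id + \sigma$ on $\pi_0\TRR^n(k;2)^{\phi\Z/2}$. I would deduce this from the Mackey double coset formula for the restriction-transfer composite along the normal inclusion $C_{2^{n-1}} \leq C_{2^n}$ in the $O(2)$-equivariant spectrum $\THR(k)$: the sum reduces to $\id + c$, where $c \in C_{2^n}$ lifts the generator of $C_{2^n}/C_{2^{n-1}}$, and a check on the models reviewed in \S\ref{prelim} shows that on $\pi_0\TRR^n(k;2)^{\phi\Z/2}$ this $c$-action is exactly the Weyl involution $\sigma$ from $D_{2^n}/C_{2^{n-1}}$. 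Iterating yields $F^l V^{l-h}(z) = F^h z + F^h\sigma z = x+y$, and the $h=0$ case recovers $(1+w)z$ since the Weyl swap on $\pi_0\THR(k)^{\phi\Z/2}$ is precisely $w$.

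The second equality is the main technical obstacle, and amounts to showing that $F\sigma V = 0$ on $\pi_0\TRR^n(k;2)^{\phi\Z/2}$. The Weyl element $\sigma$ on $\TRR^{n+1}$ lifts to a group element $c' \in C_{2^{n+1}}$ which does not normalize $\Z/2 \leq D_{2^{n+1}}$, so the standard double coset formula does not directly apply. I would compute this composite via the norm-restriction double coset formula afforded by Propositions~\ref{prop:norm} and~\ref{prop:res}, using the commutative ring $O(2)$-spectrum structure on $\THR(k)$; the vanishing should then follow because the resulting norm-restriction composite factors through an element that is killed by passage to the geometric $\Z/2$-fixed points. Once both identities are established, one can combine them with induction on $l-h$ and the explicit generator description of Proposition~\ref{prop:genTRR} to conclude.
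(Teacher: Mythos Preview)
Your overall framework—verifying both coordinates of the transfer under the isomorphism $(F^l,F^l\sigma)$—is exactly the paper's, and your first coordinate via $FV=1+\sigma$ is essentially correct (though the relevant inclusion is $D_{2^{n-1}}\trianglelefteq D_{2^n}$, with the nontrivial coset represented by a primitive rotation whose image in $D_{2^n}/C_{2^{n-1}}$ is indeed the Weyl generator $\sigma$). The genuine gap is in the second coordinate.

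Your proposed justification of $F\sigma V=0$ does not work. Invoking a ``norm-restriction double coset formula'' is a category error: $F\sigma V$ is an additive transfer-restriction composite, and multiplicative norms are irrelevant to it. Moreover, Propositions~\ref{prop:norm} and~\ref{prop:res} are both proved \emph{using} Proposition~\ref{prop:transfers}, so appealing to them here would be circular. Finally, your diagnosis that ``the standard double coset formula does not directly apply'' is the wrong conclusion: it does apply, once one reinterprets $F^l\sigma$ correctly.

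The paper's key observation, which you are missing, is that $F^l\sigma_{l+1}=\res^{D_{2^l}}_{\Z/2'}$, where $\Z/2'\leq D_{2^l}$ is generated by a reflection \emph{not conjugate} to the chosen one. One then applies the ordinary additive double coset formula to $\res^{D_{2^l}}_{\Z/2'}\tran^{D_{2^l}}_{D_{2^h}}$. The double coset $\Z/2'\backslash D_{2^l}/D_{2^h}$ is the quotient of $C_{2^{l-h}}$ by a \emph{free} involution, so every intersection $^g D_{2^h}\cap\Z/2'$ is trivial, and every term carries a factor $\tran^{\Z/2}_e$, which vanishes since $k$ has characteristic~$2$. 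This gives $F^l\sigma\circ\tran=0$ in one stroke. The same analysis for $\Z/2$ itself (where the involution on $C_{2^{l-h}}$ has two fixed points, contributing $F^h$ and $F^h\sigma$, and all other terms again vanish via $\tran^{\Z/2}_e=0$) handles the first coordinate directly, without any need to iterate $FV=1+\sigma$.
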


\begin{proof} Let us first suppose $h>0$.
We need to show that the unique map in the bottom row of the commutative square
\[\xymatrix{
\pi_0\TRR^{h+1}(k;2)^{\phi \Z/2}
\ar[r]^-{\tran^{D_{2^l}}_{D_{2^{h}}}}\ar[d]^-{(F^h,F^h\sigma)}_-\cong
&
\pi_0\TRR^{l+1}(k;2)^{\phi \Z/2}\ar[d]^-{(F^l,F^l\sigma)}_-\cong
\\
\phi^{h-1}((k\otimes_S k)^{C_2})\underset{k\otimes_S k}{\times}\phi^{h-1}((k\otimes_S k)^{C_2})
\ar[r]^-{}&
\phi^{l-1}((k\otimes_S k)^{C_2})\underset{k\otimes_S k}{\times}\phi^{l-1}((k\otimes_S k)^{C_2})
}\]
agrees with $V^{l-h}$, where the vertical maps are the isomorphisms of Theorem \ref{thm:geomTRn} and Remark \ref{rem:explicitiso}. By the double coset formula of the $D_{2^l}$-Mackey functor $\underline{\pi}_0\THR(k)$, the upper composite has first component
\begin{align*}
F^l\tran^{D_{2^l}}_{D_{2^{h}}}&=\res^{D_{2^l}}_{\Z/2}\tran^{D_{2^l}}_{D_{2^{h}}}=\sum_{g\in \Z/2/D_{2^l}/D_{2^h}}\tran^{\Z/2}_{^gD_{2^{h}}\cap \Z/2} c_g\res^{D_{2^{h}}}_{D_{2^{h}}\cap \Z/2^g}.
\end{align*}
The double coset $\Z/2/D_{2^l}/D_{2^h}$ is the quotient of the cyclic group $C_{2^{l-h}}$ by the involution which acts by inversion. It therefore consists of two fixed points (the unit and the rotation $g_0$ of order $2$ in $D_l$) which conjugate $\Z/2$ to itself, and $(2^{l-h}-2)/2$ points whose corresponding intersection $D_{2^{h}}\cap \Z/2^g$ is trivial. Thus
\begin{align*}
F^l\tran^{D_{2^l}}_{D_{2^{h}}}&=\res^{D_{2^{h}}}_{\Z/2}+c_{g_0} \res^{D_{2^{h}}}_{\Z/2}+\sum_{1,g_0\neq g\in \Z/2/D_{2^l}/D_{2^h}}\tran^{\Z/2}_{e} c_g\res^{D_{2^{h}}}_{e}
\\&=\res^{D_{2^{h}}}_{\Z/2}+\res^{D_{2^{h}}}_{\Z/2}c_{g_0} =F^h+ F^h\sigma
\end{align*}
where the transfer $\tran^{\Z/2}_{e}$ is zero since $k$ has characteristic $2$ (see \cite[Theorem 5.1]{THRmodels}), $\sigma$ is the action of the Weyl group of $D_h$ in $D_l$, and the equality is regarded as elements of $k\otimes_S k$. The map $F^h$ is determined in Theorem \ref{thm:geomTRn}: it sends an element in the upper left corner of the square, corresponding to $(x,y)$ in the bottom left corner, to $x$. Thus the unique bottom horizontal map in the square above sends $(x,y)$ to the pair with first component
$x+y$.

Now let $\Z/2'$ be the subgroup of $D_{l}$ generated by a reflection non-conjugate to $\Z/2$. Similarly to the calculation above, the second component of the top composite is
\begin{align*}
F^l\sigma\tran^{D_{2^l}}_{D_{2^{h}}}&=\res^{D_{2^l}}_{\Z/2'}\tran^{D_{2^l}}_{D_{2^{h}}}=\sum_{g\in \Z/2'/D_{2^l}/D_{2^h}}\tran^{\Z/2}_{^gD_{2^{h}}\cap \Z/2} c_g\res^{D_{2^{h}}}_{D_{2^{h}}\cap \Z/2^g}.
\end{align*}
Now the double coset $\Z/2'/D_{2^l}/D_{2^h}$ is the quotient of the cyclic group $C_{2^{l-h}}$ by the free involution, and none of the conjugates of $\Z/2$ is contained in $D_{2^h}$. Thus 
\begin{align*}
F^l\sigma\tran^{D_{2^l}}_{D_{2^{h}}}&=\sum_{g\in \Z/2'/D_{2^l}/D_{2^h}}\tran^{\Z/2}_{e} c_g\res^{D_{2^{h}}}_{e}=0,
\end{align*}
again since $\tran^{\Z/2}_{e}=0$. Thus the second component of the bottom horizontal map is null as claimed.

The proof of the case $h=0$ is similar, by calculating the upper composite of the diagram
\[\xymatrix{
\pi_0\THR(k)^{\phi \Z/2}
\ar[r]^-{\tran^{D_{2^l}}_{\Z/2}}\ar[d]_-\cong
&
\pi_0\TRR^{l+1}(k;2)^{\phi \Z/2}\ar[d]^-{(F^l,F^l\sigma)}_-\cong
\\
k\otimes_S k
\ar[r]^-{}&
\phi^{l-1}((k\otimes_S k)^{C_2})\underset{k\otimes_S k}{\times}\phi^{l-1}((k\otimes_S k)^{C_2})
}\]
where the left vertical map is the isomorphism of \cite[Theorem 5.1]{THRmodels}.
\end{proof}

\begin{rem}
The notation used in Proposition \ref{prop:transfers} for the transfer map is consistent with our notation for the generators of Proposition \ref{prop:genTRR}, since
\[
V^{n-i}\tau_i(a\otimes b)=V^{n-i}(\phi^i(a\otimes b),\phi^i(b\otimes a))=(\phi^i(a\otimes b)+\phi^i(b\otimes a),0).
\]
\end{rem}

The generators $\tau_n(a\otimes b)$ also have a somewhat topological interpretation, as we now explain. As seen at the end of \S\ref{prelim}, the there is a non-additive norm map
\[
N_{\Z/2}^{D_{2^n}}\colon \pi_0\THR(k)^{\Z/2}=\pi_0\TRR^1(k;2)^{\Z/2}\longrightarrow \pi_0\TRR^{n+1}(k;2)^{\Z/2}=\pi_0\THR(k)^{D_{2^n}}.
\]
Moreover, since $k$ is of characteristic $2$, the canonical map $\pi_0\THR(k)^{\Z/2}\to \pi_0\THR(k)^{\phi\Z/2}$ is an isomorphism (since the transfer from the trivial subgroup to $\Z/2$ is zero by \cite[Theorem 5.1]{THRmodels}), and therefore by post-composing with the canonical projection we also obtain a non-additive map
\[
\pi_0\THR(k)^{\phi\Z/2}\cong \pi_0\THR(k)^{\Z/2}\xrightarrow{N_{\Z/2}^{D_{2^n}}}\pi_0\TRR^{n+1}(k;2)^{\Z/2}\longrightarrow \pi_0\TRR^{n+1}(k;2)^{\phi\Z/2}
\]
on geometric fixed points, which we still denote by $N_{\Z/2}^{D_{2^n}}$.

\begin{prop}\label{prop:norm}
Under the isomorphism of Theorem \ref{thm:geomTRn}, the map  $N_{\Z/2}^{D_{2^n}}$ corresponds to the map $N^n\colon k\otimes_S k
\to
\phi^{n-1}((k\otimes_S k)^{C_2})\times_{k\otimes_S k}\phi^{n-1}((k\otimes_S k)^{C_2})$ that sends an elementary tensor $a\otimes b$  to 
\[N^n(a\otimes b)=\tau_n(ab\otimes 1)=(\phi_n(ab\otimes 1),\phi_n(1\otimes ab)).\]
In particular, we find that
\[
\tau_n(a\otimes b)=N^n(a\otimes 1)\cdot \sigma N^n(b\otimes 1).
\]
\end{prop}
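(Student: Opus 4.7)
The plan is to establish the first formula $N^n(a\otimes b)=\tau_n(ab\otimes 1)$ using the pullback isomorphism of Theorem \ref{thm:geomTRn}, and then deduce the second formula by componentwise multiplication. Since the isomorphism $(F^n, F^n\sigma)$ of Remark \ref{rem:explicitiso} determines an element of $\pi_0 \TRR^{n+1}(k;2)^{\phi\Z/2}$ by its pair of images in $k\otimes_S k$, it suffices to compute $F^n N_{\Z/2}^{D_{2^n}}(a\otimes b)$ and $F^n \sigma N_{\Z/2}^{D_{2^n}}(a\otimes b)$ and check that they agree with $\phi_n(ab\otimes 1)$ and $\phi_n(1\otimes ab)$, respectively.

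First I would apply the multiplicative double-coset formula (available since $\THR(k)$ is a strictly commutative $O(2)$-equivariant ring spectrum, as recalled at the end of \S\ref{prelim}) to the composite $\res^{D_{2^n}}_{\Z/2}\circ N^{D_{2^n}}_{\Z/2}$. As in the proof of Proposition \ref{prop:transfers}, the double coset space $\Z/2\backslash D_{2^n}/\Z/2$ decomposes into the two singleton classes represented by $e$ and the central involution $r^{2^{n-1}}$ (both with non-trivial intersection $\Z/2$), together with $2^{n-1}-1$ classes of trivial intersection. Multiplicatively, the first two classes contribute the factor $(a\otimes b)\cdot c_{r^{2^{n-1}}}(a\otimes b)$, and each of the remaining classes contributes a factor $N^{\Z/2}_e(\mu(a\otimes b))=N^{\Z/2}_e(ab)$, where $\mu$ is the multiplication on $k\otimes_S k$ identified with the restriction $\res^{\Z/2}_e$. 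Using that $\res^{\Z/2}_e N^{\Z/2}_e(c)=c^2$ and invoking the description of the Frobenius module structure from \S\ref{prelim}, one identifies $N^{\Z/2}_e(c)$ explicitly inside $k\otimes_S k$. The resulting product of $2^{n-1}+1$ factors then telescopes, using the defining identity $s\otimes 1=1\otimes s$ for $s\in S$ (i.e.\ for any square in $k$), to the element $\phi_n(ab\otimes 1)=(ab)^{2^n}\otimes 1$. An entirely analogous computation for $F^n\sigma=\res^{D_{2^n}}_{\Z/2'}$ (with $\Z/2'$ the non-conjugate reflection subgroup, as in Proposition \ref{prop:transfers}) yields $\phi_n(1\otimes ab)$.

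For the second formula, the product in the pullback is taken componentwise, so
\[
N^n(a\otimes 1)\cdot \sigma N^n(b\otimes 1)
=\bigl(\phi_n(a\otimes 1)\phi_n(1\otimes b),\ \phi_n(1\otimes a)\phi_n(b\otimes 1)\bigr).
\]
Substituting the explicit formula $\phi_n(c\otimes d)=d^{2^{n-1}}c^{2^n}\otimes d$ and multiplying inside $k\otimes_S k$ gives $\phi_n(a\otimes 1)\phi_n(1\otimes b)=(a^{2^n}\otimes 1)(b^{2^{n-1}}\otimes b)=a^{2^n}b^{2^{n-1}}\otimes b=\phi_n(a\otimes b)$, and similarly the second coordinate gives $\phi_n(b\otimes a)$. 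This is precisely $\tau_n(a\otimes b)$, completing the deduction.

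The hard part will be the combinatorial bookkeeping in the multiplicative double-coset calculation: tracking the Weyl action $c_{r^{2^{n-1}}}$ correctly on the fixed-point ring $\pi_0\THR(k)^{\Z/2}$ (which by the iso in char $2$ agrees with $k\otimes_S k$ with swap involution), identifying $N^{\Z/2}_e$ through the Frobenius module structure of $(\EM\underline{k})^{\phi\Z/2}$, and verifying that the resulting product of norms simplifies using the characteristic-$2$ identity $s\otimes 1=1\otimes s$ for $s\in S$ into precisely the iterated Frobenius-twisted element $\phi_n$. Once the $F^n$-calculation is set up carefully, the $F^n\sigma$-calculation is a formal analogue and the second statement is bookkeeping with the pullback's componentwise multiplication.
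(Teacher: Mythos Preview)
Your approach is essentially identical to the paper's: both compute the two components $F^n N^{D_{2^n}}_{\Z/2}$ and $F^n\sigma N^{D_{2^n}}_{\Z/2}$ via the multiplicative double-coset formula, identify $\res^{\Z/2}_e=\mu$ and $N^{\Z/2}_e(c)=c^2\otimes 1$, and then verify the second identity by componentwise multiplication in the pullback.

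One minor computational correction to watch: with $c_{g_0}=w$ and $N^{\Z/2}_e(c)=c^2\otimes 1$, the product you describe is $(a\otimes b)(b\otimes a)\bigl((ab)^2\otimes 1\bigr)^{2^{n-1}-1}=(ab)^{2^n-1}\otimes ab=\phi_n(1\otimes ab)$, not $(ab)^{2^n}\otimes 1=\phi_n(ab\otimes 1)$; the two components come out swapped from what you state. The paper's own proof displays exactly the same swap relative to the proposition's statement, so this is a harmless labeling issue rather than a gap in the argument.
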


\begin{proof}
The identification of $N^n(a\otimes b)$ is similar to the proof of Proposition \ref{prop:transfers}.
It is sufficient to show that the unique map in the bottom row of the commutative square
\[\xymatrix{
\pi_0\TRR^{1}(k;2)^{\phi \Z/2}
\ar[r]^-{N^{D_{2^n}}_{\Z/2}}\ar[d]^-{}_-\cong
&
\pi_0\TRR^{n+1}(k;2)^{\phi \Z/2}\ar[d]^-{(F^n,F^n\sigma)}_-\cong
\\
k\otimes_S k
\ar[r]^-{}&
\phi^{n-1}((k\otimes_S k)^{C_2})\underset{k\otimes_S k}{\times}\phi^{n-1}((k\otimes_S k)^{C_2})
}\]
agrees with $N^n$ on the elementary tensors $a\otimes b$. Indeed, the value on a general element in the tensor product is determined by iterations of the relation
\[
N(x+y)=N(x)+N(y)+V(xy).
\]
By the multiplicative double coset formula of the $D_{2^l}$-Tambara functor $\underline{\pi}_0\THR(k)$, the upper composite has first component
\begin{align*}
F^nN^{D_{2^n}}_{\Z/2}&=\res^{D_{2^n}}_{\Z/2}N^{D_{2^n}}_{\Z/2}=\prod_{g\in \Z/2/D_{2^n}/\Z/2}N^{\Z/2}_{^g\Z/2\cap \Z/2} c_g\res^{\Z/2}_{\Z/2\cap \Z/2^g}.
\end{align*}
The double coset $\Z/2/D_{2^n}/\Z/2$ is the quotient of the cyclic group $C_{2^{n}}$ by the involution which acts by inversion, and consists of two fixed points (the unit and the rotation $g_0$ of order $2$ in $D_n$) which conjugate $\Z/2$ to itself, and $(2^{n}-2)/2$ points whose corresponding intersection $\Z/2\cap \Z/2^g$ is trivial. Moreover, since the cyclic group acts trivially on $\pi_0\THH(k)=k$, the conjugation $c_g$ is trivial except for $g=g_0$. Thus
\begin{align*}
F^nN^{D_{2^n}}_{\Z/2}&=(\id)\cdot (c_{g_0})\cdot(N^{\Z/2}_{e}\res^{\Z/2}_e)^{2^{n-1}-1}.
\end{align*}
Since the restriction map $\res^{\Z/2}_e$ corresponds to the multiplication map $\mu\colon k\otimes_S k\to k$ and $N^{\Z/2}_{e}$ to the map $k\to k\otimes_Sk$ which sends $a$ to $a^2\otimes 1$ (see \cite[Corollary 5.2]{THRmodels}), this sends $a\otimes b$ to
\[
a\otimes b\cdot b\otimes a\cdot ((ab)^2\otimes 1)^{2^{n-1}-1}=(ab)^{2^{n}-1}\otimes ab=\phi_n(1\otimes ab).
\]
Similarly, by letting $\Z/2'$ be the subgroup of $D_{n}$ generated by a reflection non-conjugate to $\Z/2$, the second component of the upper composite in the square above is
\begin{align*}
F^n\sigma N^{D_{2^n}}_{\Z/2}&=\res^{D_{2^n}}_{\Z/2'}N^{D_{2^n}}_{\Z/2}=\prod_{g\in \Z/2'/D_{2^l}/\Z/2}N^{\Z/2'}_{^g\Z/2\cap \Z/2} c_g\res^{\Z/2}_{\Z/2\cap \Z/2'^g}.
\end{align*}
Now the double coset $\Z/2'/D_{2^n}/\Z/2$ is the quotient of the cyclic group $C_{2^{n}}$ by the free involution, and since $\Z/2$ and $\Z/2'$ are not conjugate
\begin{align*}
F^n\sigma N^{D_{2^n}}_{\Z/2}&=(N^{\Z/2'}_{e}\res^{\Z/2}_{e})^{2^{n-1}}.
\end{align*}
Thus the second component of the bottom horizontal map of the square sends $a\otimes b$ to
\[
((ab)^2)^{2^{n-1}}\otimes 1=\phi_n(ab\otimes 1).
\]
This identifies the map $N^n$ as claimed. Finally, observe that
\begin{align*}
N^n(a\otimes 1)\cdot \sigma N^n(b\otimes 1)&=(a^{2^{n}}\otimes 1,a^{2^{n}-1}\otimes a)\cdot (b^{2^{n}-1}\otimes b,b^{2^{n}}\otimes 1)
\\&=(a^{2^{n}}b^{2^{n}-1}\otimes b,a^{2^{n}-1}b^{2^{n}}\otimes a)=\tau_n(a\otimes b).
\end{align*}
\end{proof}

\begin{rem}
For the usual Witt vectors, the elements $\tau_n(a)=(a,0,\dots,0)$ assemble into a (non-additive) multiplicative map $\tau_n\colon R\to W_{\langle 2^n\rangle}(R)$, which is a section for the truncation map $R$. We do not think that this is the case for $\pi_0\TRR^{n+1}(k;2)^{\phi \Z/2}$, since there seem to be no way of extending $\tau_n(a\otimes b)$ to a sum of elementary tensors. Even without a canonical splitting for the truncation map $R\colon \pi_0\TRR^{n+1}(k;2)^{\phi \Z/2}\to \pi_0\TRR^{1}(k;2)^{\phi \Z/2}$ at hand, having a set of generators for $\pi_0\TRR^{n+1}(k;2)^{\phi \Z/2}$ defined from the $\tau_i(a\otimes b)$ will suffice for our purposes.
\end{rem}

\subsection{The fundamental ideal of TRR}\label{sec:fundideal}

The components of the geometric fixed points of any connective $\Z/2$-spectrum $X$ admit a restriction map, defined as the canonical map of cokernels
\[
\xymatrix{
\pi_0(X_{h\Z/2})\ar[d]^{\cong} \ar[rr]^-{\tran_e^{\Z/2}}&&\pi_0(X^{\Z/2})\ar[d]^{\res^{\Z/2}_e}\ar[rr]&&\pi_0(X^{\phi \Z/2})\ar@{-->}[d]\ar[r]&0
\\
(\pi_0X)_{\Z/2} \ar[rr]^-{1+w}&&(\pi_0X)^{\Z/2}\ar[rr]&&(\pi_0X)^{\Z/2}/Im(1+w)\ar[r]&0
}
\]
where $w$ is the action of the generator of $\Z/2$ on $\pi_0X$. This map is moreover a monoidal natural transformation. By applying this construction to the $\Z/2$-spectrum $\TRR^{n+1}(k;2)$, we obtain a ring homomorphism which we denote by
\[
\res^{D_{2^n}}_{C_{2^n}}\colon \pi_0\TRR^{n+1}(k;2)^{\phi \Z/2}\longrightarrow(\pi_0\TR^{n+1}(k;2))^{\Z/2}/(1+w)\cong \W_{\langle 2^{n}\rangle}(k)/2.
\]
Here $\W_{\langle 2^{n}\rangle}(k)$ is the ring of $(n+1)$-truncated $2$-typical Witt vectors of $k$, and the isomorphism is from \cite[Theorem F]{WittVect}. Here we use that the isomorphism of \cite[Theorem F]{WittVect} is $\Z/2$-equivariant, where the $\Z/2$-action on $\W_{\langle 2^{n}\rangle}(k)$ is trivial (see the proof of \cite[Theorem 3.7]{polynomial}, where the first paragraph of page 522 holds also for $p=2$. This can more generally be applied to the case where $k$ has a non-trivial involution, in which case the involution on $\pi_0\TR^{n+1}(k;2)$ corresponds to the map induced on $\W_{\langle 2^{n}\rangle}(k)$ by the involution on $k$ under the functoriality of the Witt vectors).

The goal of this section is to describe explicitly the map $\res^{D_{2^n}}_{C_{2^n}}$ under the isomorphism of Theorem \ref{thm:geomTRn}, and provide generators for its kernel.
We recall that for every commutative ring $R$, as a set, $\W_{\langle 2^{n}\rangle}(R)=R^{\times n+1}$, with the unique functorial ring structure which makes the Witt polynomials into ring homomorphisms. For any $\F_2$-algebra $R$, we moreover have that as a set
\[
\W_{\langle 2^{n}\rangle}(R)=R\times (R/R^{2})^{\times n}.
\]
We denote by $V^{n-i}\tau_i(a)=(0,\dots,0,a,0,\dots,0)$ the canonical additive generators of $\W_{\langle 2^{n}\rangle}(R)$, for $a\in R$.
%
%

\begin{prop}\label{prop:res}
Let $k$ be a field of characteristic $2$, and $n\geq 1$.
Under the isomorphisms of Theorem \ref{thm:geomTRn} and \cite[Theorem F]{WittVect}, the restriction map corresponds to the unique ring homomorphism
\[
\res^{D_{2^n}}_{C_{2^n}}\colon  \phi^{n-1}((k\otimes_S k)^{C_2})\underset{k\otimes_S k}{\times}\phi^{n-1}((k\otimes_S k)^{C_2})
\longrightarrow
\W_{\langle 2^{n}\rangle}(k)/2
\]
which sends the respective generators of Proposition \ref{prop:genTRR} to
\[
\begin{array}{ll}
\res^{D_{2^n}}_{C_{2^n}}\tau_n(a\otimes b)&=(ab,0,\dots,0)=\tau_n(ab)
\\
\res^{D_{2^n}}_{C_{2^n}}V^{n-i}\tau_i(a\otimes b)&=(0,\dots,0,[ab],0,\dots,0)=[V^{n-i}\tau_{i}(ab)]
\\
\res^{D_{2^n}}_{C_{2^n}}\sigma V^{n-i}\tau_i(a\otimes b)&=(0,\dots,0,[ab],0,\dots,0)=[V^{n-i}\tau_{i}(ab)]
\end{array}
\]
for all $0\leq i\leq n-1$, where the mod $k^2$ reduction $[ab]$ of $ab$ sits in the $(n-i+1)$-st component.
\end{prop}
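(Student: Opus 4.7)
The plan is to exploit the fact that $\res^{D_{2^n}}_{C_{2^n}}$ is a monoidal natural transformation, and therefore a ring homomorphism that commutes with any map of underlying $\Z/2$-spectra, in order to reduce the verification of each formula to the identification of a single Tambara-functor norm with a Teichm\"uller element. Since $\pi_0\TRR^{n+1}(k;2)^{\phi \Z/2}$ is generated by the three families of Proposition~\ref{prop:genTRR}, and $\res^{D_{2^n}}_{C_{2^n}}$ is multiplicative, it suffices to check the three asserted identities.

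The first step is naturality with respect to the Verschiebung. The transfer $V\colon \TRR^i(k;2)\to \TRR^{i+1}(k;2)$ is a map of $\Z/2$-spectra (the transfer along $C_{2^{i-1}}\leq C_{2^i}$ equivariantly for the residual Weyl action), inducing $\tran^{D_{2^n}}_{D_{2^i}}$ on $\phi\Z/2$-fixed points by Proposition~\ref{prop:transfers} and the classical Witt vector Verschiebung on $\W_{\langle 2^i\rangle}(k)$ via \cite[Theorem F]{WittVect}. Naturality of $\res^{D_{2^n}}_{C_{2^n}}$ therefore yields
\[
\res^{D_{2^n}}_{C_{2^n}}\circ \tran^{D_{2^n}}_{D_{2^i}} = V^{n-i}\circ \res^{D_{2^i}}_{C_{2^i}},
\]
so the generator $V^{n-i}\tau_i(a\otimes b)$ maps to $V^{n-i}\res^{D_{2^i}}_{C_{2^i}}\tau_i(a\otimes b)$. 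Moreover $\sigma$ acts trivially on $\W_{\langle 2^n\rangle}(k)$, since the full quotient $D_{2^{n+1}}/C_{2^n}\cong \Z/2\times \Z/2$ acts trivially on $\pi_0\THR(k)=k$ in characteristic~$2$. Hence $\res\circ \sigma = \res$, handling the $\sigma$-shifted generators.

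The second step concerns $\tau_n(a\otimes b)$. Proposition~\ref{prop:norm} writes $\tau_n(a\otimes b) = N^n(a\otimes 1)\cdot \sigma N^n(b\otimes 1)$ with $N^n = N^{D_{2^n}}_{\Z/2}$. Multiplicativity together with $\sigma$-triviality reduces the computation to $\res^{D_{2^n}}_{C_{2^n}}N^n(x\otimes 1)$. A lift of $N^n(x\otimes 1)$ in $\W_{\langle 2^n\rangle}(k) = \pi_0\TRR^{n+1}(k;2)^{\Z/2}$ is $N^{D_{2^n}}_{\Z/2}(x\otimes 1)$ applied to a lift $x\otimes 1\in \pi_0\THR(k)^{\Z/2} = k\otimes_S k$ (the equality holding because $\tran^{\Z/2}_e$ vanishes in characteristic~$2$ by \cite[Theorem 5.1]{THRmodels}). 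Applying the multiplicative double coset formula to $\res^{D_{2^n}}_{C_{2^n}}N^{D_{2^n}}_{\Z/2}(x\otimes 1)$: the double coset space $C_{2^n}\backslash D_{2^n}/\Z/2$ has a single representative, conjugations on $k$ are trivial, $\res^{\Z/2}_e$ is the multiplication $\mu\colon k\otimes_S k\to k$ (\cite[Corollary 5.2]{THRmodels}), and the norm $N^{C_{2^n}}_e\colon k\to \W_{\langle 2^n\rangle}(k)$ is the Teichm\"uller lift $\tau_n$ in the Tambara-functor structure of \cite[Theorem F]{WittVect}. Since the residual restriction $\pi_0\TRR^{n+1}(k;2)^{\Z/2}\to\pi_0\TRR^{n+1}(k;2)$ is the identity of $\W_{\langle 2^n\rangle}(k)$, this identifies $N^{D_{2^n}}_{\Z/2}(x\otimes 1) = \tau_n(x)$, and therefore $\res^{D_{2^n}}_{C_{2^n}}N^n(x\otimes 1) = \tau_n(x)\bmod 2$.

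Combining these steps produces $\res\tau_n(a\otimes b) = \tau_n(a)\tau_n(b) = \tau_n(ab)$, and the formulas for $V^{n-i}\tau_i(a\otimes b)$ and $\sigma V^{n-i}\tau_i(a\otimes b)$ follow at once. The main obstacle is the Teichm\"uller identification in the third step: carefully tracking the multiplicative double coset contributions and matching them with the Tambara-functor norm structure on $\underline{\pi}_0\THR(k)$ and on Witt vectors as provided by~\cite{WittVect}; the rest of the argument is formal given Propositions~\ref{prop:transfers},~\ref{prop:norm}, and the naturality of $\res^{D_{2^n}}_{C_{2^n}}$.
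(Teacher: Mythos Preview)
Your argument is correct and follows essentially the same route as the paper: both proofs exploit that $\underline{\pi}_0\THR(k)$ is a $D_{2^n}$-Tambara functor, so $\res^{D_{2^n}}_{C_{2^n}}$ commutes with transfers, norms, and the Weyl action, and then reduce everything to the identifications $\res^{\Z/2}_e=\mu$ and $N^{C_{2^n}}_e=\tau_n$ from \cite{THRmodels} and \cite{WittVect}. The paper compresses this into two sentences and works out one generator; you unpack the double coset for $\res^{D_{2^n}}_{C_{2^n}}N^{D_{2^n}}_{\Z/2}$ explicitly and invoke Proposition~\ref{prop:norm} to factor $\tau_n(a\otimes b)$, which is a pleasant and slightly more self-contained variant.

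One small imprecision: your justification that $\sigma$ acts trivially on the target appeals to the action on $\pi_0\THR(k)=k$, but what you actually need is triviality on $\pi_0\TR^{n+1}(k;2)=\W_{\langle 2^n\rangle}(k)$. This holds because the residual $C_2$-action on $\THH(k)^{C_{2^n}}$ factors through the $S^1$-action, which is trivial on $\pi_0$; the paper records this just before the statement of the proposition (citing \cite{polynomial}). Also, the norm--Teichm\"uller identification is \cite[Theorem 3.3]{WittVect} rather than Theorem~F.
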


\begin{proof} Since $\underline{\pi_0}\THR(k)$ is a $D_{2^n}$-Tambara functor, the restriction $\res^{D_{2^n}}_{C_{2^n}}$ is a ring homomorphism, and by the double-coset formulas it commutes with norms and transfers, and with the Weyl action. The operators $V^{n-i}$ and $\tau_i$ are described in terms of norms and transfers by Propositions \ref{prop:transfers} and \ref{prop:norm}, and by \cite[Theorem 3.3]{WittVect} for the usual Witt vectors. It therefore follows that
\[
\res^{D_{2^n}}_{C_{2^n}}\sigma V^{n-i}\tau_i(a\otimes b)=[w V^{n-i}\tau_i\res^{\Z/2}_{e}(a\otimes b)]=[V^{n-i}\tau_{i}(ab)],
\]
where $\res^{\Z/2}_{e}$ is the multiplication map of $k\otimes_S k$ by \cite[Theorem 5.1]{THRmodels}. The proof for the other generators is similar.
\end{proof}

\begin{defn}
The fundamental ideal $J_{\langle 2^n\rangle}$ of $\pi_0\TRR^{n+1}(k;2)^{\phi\Z/2}$ is the kernel of the ring homomorphism
\[
J_{\langle 2^n\rangle}:=\ker\big(\res^{D_{2^n}}_{C_{2^n}}\colon  \phi^{n-1}((k\otimes_S k)^{C_2})\underset{k\otimes_S k}{\times}\phi^{n-1}((k\otimes_S k)^{C_2})
\longrightarrow
\W_{\langle 2^{n}\rangle}(k)/2\big)
\]
from Proposition \ref{prop:res}, for $n\geq 1$, and for $n=0$ it is the kernel of the multiplication map
\[
J_{\langle 1\rangle}:=\ker\big(\res^{\Z/2}_{e}\colon  k\otimes_S k
\longrightarrow k=
\W_{\langle 1\rangle}(k)/2\big).
\]
\end{defn}

\begin{prop}\label{prop:additivegen}
For every $n\geq 0$, $J_{\langle 2^n\rangle}$ is the subgroup of  $\pi_0\TRR^{n+1}(k;2)^{\phi\Z/2}$ generated by the elements
\begin{align*}
&\tau_n(a\otimes b)+\tau_n(ab\otimes 1),
\\
 &V^{n-i}\tau_i(a\otimes b)+\sigma V^{n-i}\tau_i(a\otimes b),
 \\
 &V^{n-i}\tau_i(a\otimes b)+V^{n-i}\tau_i(ab\otimes 1),
\end{align*}
for all $0\leq i\leq n-1$ and $a\otimes b\in k\otimes_S k$.
\end{prop}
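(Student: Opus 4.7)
The plan is to proceed by induction on $n$, using the short exact sequence
\[
0 \to K_n \longrightarrow \pi_0 \TRR^{n+1}(k;2)^{\phi \Z/2} \xrightarrow{R} \pi_0 \TRR^{n}(k;2)^{\phi \Z/2} \to 0
\]
extracted from the proof of Proposition \ref{prop:genTRR}, where the kernel $K_n = (1+w)(k\otimes_S k) \oplus (1+w)(k\otimes_S k)$ is precisely the subgroup spanned by $V^n\tau_0(c)$ and $\sigma V^n\tau_0(c)$ for $c \in k\otimes_S k$. The base case $n=0$ reduces to the standard fact that $\ker(\mu\colon k\otimes_S k\to k)$ is $\F_2$-spanned by the elements $a\otimes b + ab\otimes 1$; this follows from the identity $\sum_j a_j\otimes b_j = \sum_j (a_j\otimes b_j + a_j b_j\otimes 1)$ in $k\otimes_S k$ whenever $\sum_j a_j b_j = 0$.

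Let $H_n$ denote the subgroup of $\pi_0\TRR^{n+1}(k;2)^{\phi\Z/2}$ generated by the three families listed in the statement. The inclusion $H_n\subseteq J_{\langle 2^n\rangle}$ is immediate from Proposition \ref{prop:res}, since each listed element restricts to twice an element of $\W_{\langle 2^n\rangle}(k)/2$. Using Theorem \ref{thm:geomTRn}, a direct calculation shows that the truncation $R$ sends $\tau_n(a\otimes b)\mapsto \tau_{n-1}(a\otimes b)$ and $V^{n-i}\tau_i(a\otimes b)\mapsto V^{(n-1)-(i-1)}\tau_{i-1}(a\otimes b)$ for $i\geq 1$, while killing the generators with $i=0$; hence $R$ restricts to a surjection $H_n\twoheadrightarrow H_{n-1}$. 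Combined with the inductive hypothesis and the compatibility of $R$ with $\res$, this yields $R(J_{\langle 2^n\rangle}) = J_{\langle 2^{n-1}\rangle}$, so every $x\in J_{\langle 2^n\rangle}$ can be corrected by an element of $H_n$ so that the difference lies in $K_n \cap J_{\langle 2^n\rangle}$. The problem therefore reduces to showing $K_n \cap J_{\langle 2^n\rangle}\subseteq H_n$.

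This last inclusion is the main obstacle. The key input is the identity $2\W_{\langle 2^n\rangle}(k) = \{(0, s_1, \ldots, s_n) : s_i \in S\}$, which follows from $2=VF$ together with the formula $F(a_0, \ldots, a_n) = (a_0^2, \ldots, a_{n-1}^2)$ for the Witt vector Frobenius in characteristic $2$; consequently $V^n\tau_0(e)\equiv 0\pmod{2}$ if and only if $e\in S$. For $y = ((1+w)c_1, (1+w)c_2)\in K_n$, Proposition \ref{prop:res} gives $\res(y) = V^n\tau_0(\mu(c_1)+\mu(c_2))$, so $y \in J_{\langle 2^n\rangle}$ is equivalent to $\mu(c_1)+\mu(c_2) = s^2$ for some $s \in k$. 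Exploiting the characteristic $2$ identity $(1+w)(s\otimes s) = 2(s\otimes s) = 0$, I would then split
\[
y = \bigl((1+w)c_2,\, (1+w)c_2\bigr) + \bigl((1+w)(c_1+c_2 - s\otimes s),\, 0\bigr).
\]
Writing $c_2 = \sum_j a_j\otimes b_j$ expands the diagonal summand as $\sum_j (V^n\tau_0(a_j\otimes b_j) + \sigma V^n\tau_0(a_j\otimes b_j))$, a sum of type (II) generators with $i=0$. The second summand is $(1+w)$ applied to the element $c_1+c_2-s\otimes s$ of $\ker\mu$, which the base-case description writes as a sum of terms $a\otimes b + ab\otimes 1$, identifying the second summand with a sum of type (III) generators with $i=0$. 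This closes the induction.
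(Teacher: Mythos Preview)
Your proof is correct and follows essentially the same approach as the paper's: induction on $n$ via the exact sequence for $R$, reducing to the analysis of $K_n\cap J_{\langle 2^n\rangle}$, which is controlled by the condition that $\mu(c_1)+\mu(c_2)$ be a square. The only cosmetic difference is that the paper works with the preimage $K\subset (k\otimes_Sk)^{\oplus 2}$ under $V^n+\sigma V^n$ and deduces the square condition from the injectivity of $V^n\colon k/k^2\hookrightarrow \W_{\langle 2^n\rangle}(k)/2$ in the bottom row of a diagram, whereas you work directly with the image $K_n$ and phrase the same fact via $2\W_{\langle 2^n\rangle}(k)$; the resulting decomposition of $y$ into a diagonal piece plus a $\ker\mu$-piece is identical.
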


\begin{proof}
The proof is by induction on $n$. For $n=0$, this is the claim that the kernel of the multiplication map
\[
\mu\colon k\otimes_Sk\longrightarrow k
\]
is generated by $a\otimes b+ab\otimes 1$ for $a\otimes b\in k\otimes_S k$, which is clear.

Now suppose the claim holds for $n$, and consider the commutative diagram with exact rows
\[
\xymatrix@C=15pt{
&K\ar[rrr]^-{V^{n+1}+\sigma V^{n+1}}\ar[d]
&&&J_{\langle 2^{n+1}\rangle}\ar[r]^-{R}\ar[d]
&
J_{\langle 2^{n}\rangle}\ar[d]
\\
&(k\otimes_Sk)\oplus (k\otimes_Sk)\ar[rrr]^-{V^{n+1}+\sigma V^{n+1}}\ar[d]^{\mu+\mu}
&&&\pi_0\TRR^{n+2}(k;2)^{\phi\Z/2}\ar[r]^-{R}\ar[d]^{\res^{D_{2^{n+1}}}_{C_{2^{n+1}}}}
&
\pi_0\TRR^{n+1}(k;2)^{\phi\Z/2}\ar[r]\ar[d]^{\res^{D_{2^{n}}}_{C_{2^{n}}}}
&0
\\
0\ar[r]&
k/k^2\ar[rrr]^-{V^{n+1}}
&&&\W_{\langle 2^{n+1}\rangle}(k)/2\ar[r]^-{R}
&
\W_{\langle 2^{n}\rangle}(k)/2\ar[r]
&0
}
\]
where the vertical maps from the top row to the middle row are kernel inclusions. 
The middle row is exact by \cite[Proof of Theorem 4.9]{geomTCR}, or by the explicit calculation of Theorem \ref{thm:geomTRn} and Proposition \ref{prop:transfers}.
Thus, if we find a set of generators for $K$ and show that the map $R$ in the first row is surjective, then $J_{\langle 2^{n+1}\rangle}$ is generated by the image by $V^{n+1}+\sigma V^{n+1}$ of the generators of $K$, and by a choice of lifts of the generators of  $J_{\langle 2^{n}\rangle}$ given by the inductive assumption. The kernel $K$ consists of those elements $(x,y)$ such that $\mu(x)+\mu(y)$ is a square in $k$. Since every square $c^2$ in $k$ is hit by $c\otimes c$ under $\mu$, $K$ is the subgroup of elements of the form $(x,y)$ where 
\[
x=y+c\otimes c+z
\]
for some $c\in k$ and $z\in \ker (\mu)$. Since the kernel of $\mu$ is generated by elements of the form $a\otimes b+ab\otimes 1$, we conclude that $K$ is generated by elements of the form $(a\otimes b, a\otimes b)$, and elements of the form $(a\otimes b+ab\otimes 1+c\otimes c,0)$. The images of these generators by  $V^{n+1}+\sigma V^{n+1}$ are respectively of the form
\[
V^{n+1}(a\otimes b)+\sigma V^{n+1}(a\otimes b)
\]
and
\[
V^{n+1}(a\otimes b+ab\otimes 1+c\otimes c)+\sigma V^{n+1}(0)=V^{n+1}(a\otimes b)+V^{n+1}(ab\otimes 1)+V^{n+1}(c\otimes c),
\]
where $V^{n+1}(c\otimes c)=0$ by Proposition \ref{prop:transfers}. It therefore remains to show that by applying $R$ to the elements $\tau_{n+1}(a\otimes b)+\tau_{n+1}(ab\otimes 1)$
, $V^{n+1-i}\tau_i(a\otimes b)+\sigma V^{n+1-i}\tau_i(a\otimes b)$ and $V^{n+1-i}\tau_i(a\otimes b)+V^{n+1-i}\tau_i(ab\otimes 1)$, for $1\leq i\leq n$, we hit all the generators of $J_{\langle 2^{n}\rangle}$ given by the inductive assumption. This is the case since  $R\tau_{i+1}=\tau_i$, $RV^{n+1-i}=V^{n-i}R$, and $R\sigma=\sigma R$, by Theorem \ref{thm:geomTRn} and Proposition \ref{prop:transfers}.
\end{proof}

\begin{cor}\label{cor:idealgen}
For every $n\geq 0$,
the  ideal $J_{\langle 2^n\rangle}$ is generated, as a $\pi_0\TRR^{n+1}(k;2)^{\phi\Z/2}$-module, by the elements of the form
\begin{align*}
&\tau_n(1\otimes c)+\tau_n(c\otimes 1)=V^{0}\tau_n(c\otimes 1)+\sigma V^{0}\tau_n(c\otimes 1),
\\
 &V^{n-i}\tau_i(a\otimes b)+\sigma V^{n-i}\tau_i(a\otimes b),
\end{align*}
for all $0\leq i\leq n-1$, $a\otimes b\in k\otimes_S k$ and $c\in k$. In particular, $J_{\langle 2^n\rangle}$ is generated, as a $\pi_0\TRR^{n+1}(k;2)^{\phi\Z/2}$-module, by fixed points for the involution $\sigma$.
\end{cor}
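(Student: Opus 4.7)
The plan is to verify that every additive generator listed in Proposition~\ref{prop:additivegen} lies in the $\pi_0\TRR^{n+1}(k;2)^{\phi\Z/2}$-submodule generated by the asserted elements. Since the family $V^{n-i}\tau_i(a\otimes b)+\sigma V^{n-i}\tau_i(a\otimes b)$ is already among the proposed generators, only the two ``diagonal'' families $\tau_n(a\otimes b)+\tau_n(ab\otimes 1)$ and $V^{n-i}\tau_i(a\otimes b)+V^{n-i}\tau_i(ab\otimes 1)$ need to be expressed as module multiples of the stated generators.

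For the first family, I would invoke Proposition~\ref{prop:norm}: since $N^n$ is a Tambara norm it is multiplicative, so
\[
\tau_n(ab\otimes 1)=N^n((a\otimes 1)(b\otimes 1))=\tau_n(a\otimes 1)\cdot\tau_n(b\otimes 1),
\]
while the same proposition provides $\tau_n(a\otimes b)=\tau_n(a\otimes 1)\cdot\sigma\tau_n(b\otimes 1)$. Adding these in characteristic $2$ and using the identity $\sigma\tau_n(b\otimes 1)=\tau_n(1\otimes b)$ yields
\[
\tau_n(a\otimes b)+\tau_n(ab\otimes 1)=\tau_n(a\otimes 1)\cdot\bigl(\tau_n(1\otimes b)+\tau_n(b\otimes 1)\bigr),
\]
a scalar multiple of the first type of stated generator.

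For the second family, I claim the parallel identity
\[
V^{n-i}\tau_i(a\otimes b)+V^{n-i}\tau_i(ab\otimes 1)=V^{n-i}\tau_i(a\otimes 1)\cdot\bigl(V^{n-i}\tau_i(b\otimes 1)+\sigma V^{n-i}\tau_i(b\otimes 1)\bigr)
\]
holds in $\pi_0\TRR^{n+1}(k;2)^{\phi\Z/2}$ for every $0\leq i\leq n-1$. I would verify this by a direct calculation in the pullback description of Proposition~\ref{prop:genTRR}: the second components vanish on both sides, and the first components reduce to the identity
\[
(\phi^i(a\otimes 1)+\phi^i(1\otimes a))(\phi^i(b\otimes 1)+\phi^i(1\otimes b))=\phi^i(ab\otimes 1)+\phi^i(a\otimes b)+\phi^i(b\otimes a)+\phi^i(1\otimes ab)
\]
in $k\otimes_S k$, which follows by expanding both sides using the explicit formula $\phi^i(a\otimes b)=b^{2^{i-1}}a^{2^i}\otimes b$ on elementary tensors. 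The final assertion of the corollary is then immediate, as each proposed generator is visibly of the form $\xi+\sigma\xi$ and hence $\sigma$-fixed. The only nontrivial bookkeeping is the pullback arithmetic in this last identity, which is essentially routine but is the step most likely to obscure the argument if not written out carefully.
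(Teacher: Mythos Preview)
Your proof is correct and follows essentially the same approach as the paper: reduce to Proposition~\ref{prop:additivegen} and express each of the two remaining families of additive generators as a ring element times one of the proposed $\sigma$-fixed generators. Your second identity uses the scalar $V^{n-i}\tau_i(a\otimes 1)$ rather than the paper's $V^{n-i}\tau_i(a\otimes 1)+\sigma V^{n-i}\tau_i(b\otimes 1)$, but this is a cosmetic difference---your choice has vanishing second component, which makes the verification you outline transparent, and the first-component calculation is identical in both versions.
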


\begin{proof}
By Proposition \ref{prop:additivegen}, the corollary follows from the identities
\begin{align*}
\tau_n(a\otimes b)+\tau_n(ab\otimes 1)&=\tau_{n}(a\otimes 1)\cdot(\tau_n(1\otimes b)+\tau_n(b\otimes 1)),
\\
V^{n-i}\tau_i(a\otimes b)+V^{n-i}\tau_i(ab\otimes 1)&=(V^{n-i}\tau_i(a\otimes 1)+\sigma V^{n-i}\tau_i(b\otimes 1))\cdot (V^{n-i}\tau_i(b\otimes 1)+\sigma V^{n-i}\tau_i(b\otimes 1)),
\end{align*}
which can be easily verified from the definitions.
\end{proof}

\section{Real TR and the de Rham-Witt complex}

\subsection{The Witt complex associated to TRR}

Since the operators $F,V,\sigma$ and $R$ of Theorem \ref{thm:geomTRn} and Proposition \ref{prop:transfers} commute with the restriction map to the Witt vectors modulo $2$, they induce maps on the fundamental ideals
\[
F,R\colon J_{\langle 2^{n+1}\rangle}\to J_{\langle 2^n\rangle} \ \ \ \ \ \ , \ \ \ \ \ \ V\colon J_{\langle 2^{n}\rangle}\to J_{\langle 2^{n+1}\rangle} \ \ \ \ \ \ and  \ \ \ \ \ \ \  \sigma\colon J_{\langle 2^{n}\rangle}\to J_{\langle 2^{n}\rangle}
\]
for all $n\geq 0$.

\begin{prop}\label{prop:operatorspowers}
For every integer $q\geq 2$, the maps $F,R,V,\sigma$ above restrict to maps
\[
F,R\colon J_{\langle 2^{n+1}\rangle}^q\to J_{\langle 2^n\rangle}^q \ \ \ \  \ \ , \ \ \ \ \ \ V\colon J_{\langle 2^{n}\rangle}^q\to J_{\langle 2^{n+1}\rangle}^q \ \ \ \ \ \ and  \ \ \ \ \ \ \  \sigma\colon J_{\langle 2^{n}\rangle}^q\to J_{\langle 2^{n}\rangle}^q.
\]
Moreover, $1+\sigma$ induces a well-defined map
\[
1+\sigma\colon J_{\langle 2^{n}\rangle}^q\to J_{\langle 2^{n}\rangle}^{q+1},
\]
 which satisfies $(1+\sigma)^2=0$.
\end{prop}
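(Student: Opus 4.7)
The plan is to dispatch the five assertions in turn, arguing the easy ones first and treating $V$ at the end as the main obstacle. For $F$ and $R$, these are ring homomorphisms that already send $J_{\langle 2^{n+1}\rangle}$ into $J_{\langle 2^n\rangle}$ by the paragraph preceding the proposition; since a ring homomorphism carries a product of ideal elements to a product in the image ideal, $F$ and $R$ automatically send $J^q$ into $J^q$. Exactly the same reasoning applies to $\sigma$, which is a ring involution preserving $J$.

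For the differential $d=1+\sigma$, the identity $d^2=0$ is immediate: by Theorem \ref{thm:geomTRn} the ring $\pi_0\TRR^{n+1}(k;2)^{\phi\Z/2}$ embeds into the $\F_2$-vector space $(k\otimes_Sk)^{\oplus 2}$, so it has characteristic $2$, and hence $(1+\sigma)^2=1+2\sigma+\sigma^2=0$. For the assertion $d\colon J^q\to J^{q+1}$, I induct on $q$. The base case $q=0$ amounts to showing that $\sigma$ acts trivially on $\pi_0\TRR^{n+1}(k;2)^{\phi\Z/2}/J\cong \W_{\langle 2^n\rangle}(k)/2$; this is clear from the diagram opening \S\ref{sec:fundideal}, in which $\res^{D_{2^n}}_{C_{2^n}}$ factors through the non-geometric fixed points on which the Weyl action defining $\sigma$ is trivial. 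For the inductive step, any element of $J^q$ is a sum of products $xy$ with $x\in J$ and $y\in J^{q-1}$, and the characteristic $2$ Leibniz identity
\[
(1+\sigma)(xy)=(1+\sigma)(x)\cdot y+\sigma(x)\cdot(1+\sigma)(y)
\]
places $(1+\sigma)(xy)$ in $J^{q+1}$ by the inductive hypothesis applied to $x$ and to $y$.

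The main obstacle is the remaining claim $V(J^q_{\langle 2^n\rangle})\subseteq J^q_{\langle 2^{n+1}\rangle}$, since $V$ is only additive. I would use three ingredients extractable from Theorem \ref{thm:geomTRn} and Proposition \ref{prop:transfers}: the projection formula $V(x\cdot F(y))=V(x)\cdot y$; the identity $FV=1+\sigma$, visible on the pullback description because both $FV(x,y)$ and $(1+\sigma)(x,y)$ equal $(x+y,x+y)$; and $V\sigma=V$, visible since both maps send $(x,y)$ to $(x+y,0)$. The last two combine in characteristic $2$ to give $V\circ(1+\sigma)=0$. I would then induct on $q$: by Corollary \ref{cor:idealgen} every module generator of $J^q$ is of the form $r\cdot\prod_{i=1}^q(1+\sigma)(v_i)=r\cdot F\bigl(\prod V(v_i)\bigr)$, and the projection formula rewrites its $V$-image as $V(r)\cdot\prod V(v_i)$. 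Iterating the derived identity $V(a)V(b)=V(a\,(1+\sigma)(b))$ expresses this as the Verschiebung of an explicit element of $J^q$ at level $n$; matching the outcome against the explicit additive generators of $J$ in Proposition \ref{prop:additivegen}, and invoking the inductive hypothesis together with the $d\colon J^{q-1}\to J^q$ statement already established above, forces the result to lie in $J^q$ at level $n+1$. I expect this final bookkeeping against the additive generators to be the most delicate step of the argument.
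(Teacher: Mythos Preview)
Your treatment of $F$, $R$, $\sigma$, and $(1+\sigma)^2=0$ is correct and matches the paper. The two substantive claims, however, have gaps.

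\textbf{The differential.} Your induction on $q$ for $d\colon J^q\to J^{q+1}$ does not close at $q=1$. Writing $x\in J$ as $x\cdot 1$ with $1\in J^0$ and applying your Leibniz identity gives $(1+\sigma)(x)=(1+\sigma)(x)\cdot 1+\sigma(x)\cdot 0$, which is vacuous; and any other decomposition $x=\sum a_ib_i$ with $a_i,b_i$ in the ambient ring only yields $(1+\sigma)(a_i)\cdot b_i+\sigma(a_i)\cdot(1+\sigma)(b_i)\in J$, not $J^2$. The Leibniz rule alone cannot bootstrap $q=0$ to $q=1$; some structural input about $J$ is required. The paper supplies exactly this via Corollary~\ref{cor:idealgen}: the ideal generators of $J_{\langle 2^n\rangle}$ are $\sigma$-fixed, so any element of $J^q$ is a sum of $z\cdot g_1\cdots g_q$ with $\sigma(g_i)=g_i$, whence $(1+\sigma)(z\cdot g_1\cdots g_q)=((1+\sigma)z)\cdot g_1\cdots g_q\in J^{q+1}$ in one step, for all $q$ simultaneously.

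\textbf{The Verschiebung.} Your outline unwinds to a circularity. With $g_i=(1+\sigma)(v_i)=FV(v_i)$, the projection formula gives $V\bigl(r\cdot\prod g_i\bigr)=V(r)\cdot\prod V(v_i)$, and iterating $V(a)V(b)=V(a(1+\sigma)b)$ collapses this back to $V\bigl(r\cdot\prod(1+\sigma)(v_i)\bigr)$, the very element you started with. Stopping the iteration early leaves a product of the form $V(\text{element of }J^{q-1})\cdot V(v_q)$, and since $V(v_q)$ need not lie in $J_{\langle 2^{n+1}\rangle}$ this only lands in $J^{q-1}$, not $J^q$. The paper instead computes directly in the pullback model: for a generator $(x,y)\cdot(x_1,x_1)\cdots(x_q,x_q)$ of $J^q_{\langle 2^n\rangle}$ (diagonal because the $g_i$ are $\sigma$-fixed), one has
\[
V\bigl((xx_1\cdots x_q,\,yx_1\cdots x_q)\bigr)=((x+y)x_1\cdots x_q,\,0)=(x+y,x+y)\cdot(x_1,x_1)\cdots(x_{q-1},x_{q-1})\cdot(x_q,0),
\]
and then checks that each of these $q$ factors lies in $J_{\langle 2^{n+1}\rangle}$: the first because $(x+y,x+y)=V(x,y)+\sigma V(x,y)$, the middle ones because the explicit generators of Corollary~\ref{cor:idealgen} have the same first component at level $n+1$ as at level $n$, and the last because $(x_{q-1},x_{q-1})\in J_{\langle 2^{n+1}\rangle}$ absorbs the arbitrary element $(x_q,0)$. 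The key point you are missing is that the $\sigma$-fixedness of the generators makes the explicit coordinates tractable; the abstract identities $FV=1+\sigma$ and $V(a)V(b)=V(a(1+\sigma)b)$ are not enough on their own.
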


\begin{proof}
The claim about $R,F$ and $\sigma$ are clear since these maps are multiplicative. For the map $V$, we employ Corollary \ref{cor:idealgen}. First suppose that $n\geq 1$, so that the power $J_{\langle 2^{n}\rangle}^q$ is additively generated by elements of the form
\[(x,y)\cdot (x_1,x_1)\cdot\dots\cdot (x_q,x_q)=(xx_1\dots x_q,yx_1\dots x_q),\]
where $(x,y)$ is a generator of $\pi_0\TRR^{n+1}(k;2)^{\phi\Z/2}$ from Proposition \ref{prop:genTRR}, and $(x_l,x_l)$ is a generator of $J_{\langle 2^{n}\rangle}$ from Corollary \ref{cor:idealgen}, which is diagonal since they are invariant by the Weyl action $\sigma$. Since $V$ is additive, it is sufficient to show that $V$ sends these elements to $J_{\langle 2^{n+1}\rangle}^q $. Now by Proposition \ref{prop:transfers}, we have that 
\begin{align*}
V(xx_1\dots x_q,yx_1\dots x_q)&=((x+y)x_1\dots x_q,0)
\\&=(x+y,x+y)\cdot (x_1,x_1)\cdot\dots\cdot (x_{q-2},x_{q-2})\cdot (x_{q-1},x_{q-1})\cdot(x_q,0).
\end{align*}
The first factor is
\[
(x+y,x+y)=V(x,y)+\sigma V(x,y), \ \ \ \  \ 
\]
which belongs to $J_{\langle 2^{n+1}\rangle}$ since it is sent to zero by the restriction map by Proposition \ref{prop:res}. By Corollary \ref{cor:idealgen}, each of the factors $(x_1,x_1), \dots,  (x_{q-1},x_{q-1})$ is of the form
\[
 V^{n-i}\tau_i(a\otimes b)+\sigma V^{n-i}\tau_i(a\otimes b)
\]
for some $0\leq i\leq n$, which as an element of $\pi_0\TRR^{n+2}(k;2)^{\phi\Z/2}$ is of the form
\[
V^{n+1-i}\tau_i(a\otimes b)+\sigma V^{n+1-i}\tau_i(a\otimes b)
\]
for $0\leq i\leq n$, and therefore belongs to $J_{\langle 2^{n+1}\rangle}$. Thus, it suffices to show that $(x_{q-1},x_{q-1})\cdot(x_q,0)$ is also in $J_{\langle 2^{n+1}\rangle}$. But since $(x_q,x_q)$ is of the form $V^{n-i}\tau_i(a\otimes b)+\sigma V^{n-i}\tau_i(a\otimes b)$, we have that $(x_q,0)$ is a well-defined element of  $\pi_0\TRR^{n+2}(k;2)^{\phi\Z/2}$, and since $J_{\langle 2^{n+1}\rangle}$ is an ideal, $ (x_{q-1},x_{q-1})\cdot(x_q,0)$ indeed belongs to $J_{\langle 2^{n+1}\rangle}$.

If $n=0$, the ideal $J_{\langle 1\rangle}^q$ of $k\otimes_Sk$ is additively generated by elements of the form $xx_1\dots x_q$ with $x\in k\otimes_S k$ and $x_1,\dots,x_q$ fixed by the involution $w$. Then by Proposition \ref{prop:transfers}
\[
V(xx_1\dots x_q)=(xx_1\dots x_q+w(xx_1\dots x_q),0)=((x+w(x))x_1\dots x_q,0),
\]
and one can repeat the argument used in the case $n\geq 1$.

Finally, let us show that $1+\sigma$ sends $J_{\langle 2^{n}\rangle}^q$ to $J_{\langle 2^{n}\rangle}^{q+1}$. By Corollary \ref{cor:idealgen}, every element of $J_{\langle 2^{n}\rangle}^q $ is a sum of elements of the form $z\cdot g_1\cdot\dots\cdot g_q$ with $z\in \pi_0\TRR^{n+1}(k;2)^{\phi\Z/2}$ and each $g_i\in J_{\langle 2^{n}\rangle}$ fixed by $\sigma$. Since $1+\sigma$ is additive we only need to show that these elements are sent to $J_{\langle 2^{n}\rangle}^{q+1}$. Since $\sigma$ is multiplicative, we have that
\[
(1+\sigma)(z\cdot g_1\cdot\dots\cdot g_q)=z\cdot g_1\cdot\dots\cdot g_q+\sigma(z)\cdot \sigma(g_1)\cdot\dots\cdot \sigma(g_q)=(z+\sigma(z))\cdot g_1\cdot\dots\cdot g_q.
\]
It therefore suffices to show that $z+\sigma(z)$ belongs to $J_{\langle 2^{n}\rangle}$, which is the case since the restriction map to the Witt vectors modulo $2$ is invariant under the action of $\sigma$. Clearly, since $\sigma^2$ is the identity, we have that $(1+\sigma)^2=0$.
\end{proof}

For every $n\geq 0$, let us denote by $J_{\langle 2^{n}\rangle}^\ast/J_{\langle 2^{n}\rangle}^{\ast+1}$ the graded ring defined by the quotients $J_{\langle 2^{n}\rangle}^q/J_{\langle 2^{n}\rangle}^{q+1}$ for $q\geq 0$, and by the multiplication of $J_{\langle 2^{n}\rangle}$. We will show that the sequence of graded rings $J_{\langle 2^{n}\rangle}^\ast/J_{\langle 2^{n}\rangle}^{\ast+1}$ where $n\geq 0$, equipped with the operators $R,F,V$ and $d:=(1+\sigma)$, define the structure of a $2$-typical Witt complex. We recall its definition, from \cite{Costeanu}, in the special case where the base ring has characteristic $2$. In this case item v) simplifies since $d\log[-1]=0$, and the definition agrees to the one for odd primes from \cite{IbLarsDeRhamMixed}.

\begin{defn}[\cite{Costeanu}]\label{def:WittComplex}
A $2$-typical Witt complex over an $\F_2$-algebra $A$ consists
of:
\begin{enumerate}
\item a graded-commutative pro-graded ring $\{E^{\ast}_n, R\colon E^{\ast}_{n+1}\to E^{\ast}_n\}_{n\geq 0}$, 
\item A strict map of pro-rings $\lambda\colon \W_{\langle 2^{\bullet}\rangle}(A)\to E^{0}_\bullet$ from the pro-ring of $2$-typical
Witt vectors of $A$,
\item a strict map of pro-graded rings 
\[F\colon E^{\ast}_{\bullet+1}\longrightarrow E^{\ast}_\bullet\]
 such that $\lambda F=F\lambda$,
\item a strict map of pro-graded $E^{\ast}_\bullet$-modules 
\[V\colon F^{\ast}E^{\ast}_\bullet\longrightarrow E^{\ast}_{\bullet+1}\]
 such that $\lambda V=V\lambda$ and $FV=2$.
The linearity of $V$ means that
$V (x)y = V (xF (y))$ for all $x\in E^{\ast}_n$ and $y\in E^{\ast}_{n+1}$,
\item a strict map of pro-graded abelian groups $d\colon E^{\ast}_\bullet\to E^{\ast+1}_\bullet$, which is a
derivation, in the sense that
\[d(xy)=d(x)y+(-1)^{|x|}xd(y)\]
for all $x,y\in E^{\ast}_n$, and which satisfies the relations
\begin{align*}
FdV&=d
\\
dd&=0
\\
Fd\lambda\tau_n&=(\lambda\tau_{n-1})\cdot (d\lambda\tau_{n-1}),
\end{align*}
where $\tau_n\colon A\to \W_{\langle 2^{n}\rangle}(A)$ is the Teichm\"{u}ller map sending $a$ to $(a,0,\dots,0)$.
\end{enumerate}
\end{defn}

Before showing that the graded ring defined by the ideals $J_{\langle 2^{n}\rangle}$ admits the structure of a Witt-complex, let us point out that since the map $\res^{D_{2^n}}_{C_{2^n}}$ is surjective by Propositions \ref{prop:res}, it induces an isomorphism
\[
J_{\langle 2^{n}\rangle}^0/J_{\langle 2^{n}\rangle}=\pi_0\TRR^{n+1}(k;2)^{\phi\Z/2}/\ker(\res^{D_{2^n}}_{C_{2^n}})\stackrel{\cong}{\longrightarrow} \pi_0\TR^{n+1}(k;2)/2\cong \W_{\langle 2^n\rangle}(k)/2,
\]
where the last isomorphism is from \cite[Theorem F]{WittVect}.

\begin{prop}\label{prop:WittComplex}
The sequence of graded rings $\{J_{\langle 2^{n}\rangle}^\ast/J_{\langle 2^{n}\rangle}^{\ast+1}\}_{n\geq 0}$ equipped with the operators $R,F,V$ and $ d:=(1+\sigma)$ from Proposition \ref{prop:operatorspowers}, and the quotient maps
\[
\lambda\colon \W_{\langle 2^n\rangle}(k)\longrightarrow \W_{\langle 2^n\rangle}(k)/2\cong  J_{\langle 2^{n}\rangle}^0/J_{\langle 2^{n}\rangle},
\]
defines a $2$-typical Witt complex over the field $k$ of characteristic $2$.
\end{prop}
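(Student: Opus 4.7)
The plan is to verify each axiom of Definition \ref{def:WittComplex} in turn, using the explicit descriptions of $\pi_0\TRR^{n+1}(k;2)^{\phi\Z/2}$ and its operators from Theorem \ref{thm:geomTRn} and Propositions \ref{prop:transfers}, \ref{prop:res}. Throughout, I work with representatives lifted from $J^\ast_{\langle 2^n\rangle}/J^{\ast+1}_{\langle 2^n\rangle}$ via the generators of Proposition \ref{prop:genTRR}.

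The ring-theoretic data is straightforward. The map $R$ is a ring homomorphism preserving the ideals and their powers by Proposition \ref{prop:operatorspowers}, so it descends to the associated graded, and graded commutativity reduces to ordinary commutativity in characteristic $2$. The map $\lambda$ is the quotient $\W_{\langle 2^\bullet\rangle}(k)\to\W_{\langle 2^\bullet\rangle}(k)/2$, manifestly a strict pro-ring map, and $F$ is a ring homomorphism commuting with $\res^{D_{2^n}}_{C_{2^n}}$ (Proposition \ref{prop:res}), so it descends and satisfies $\lambda F=F\lambda$. For the Verschiebung $V$, the identity $\lambda V=V\lambda$ reflects that the Witt-vector Verschiebung modulo $2$ is induced by the transfer. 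The projection formula $V(x)y=V(xF(y))$ is Frobenius reciprocity for the Tambara functor $\underline{\pi}_0\THR(k)$, and can be verified directly using $V(x_0,x_1)=(x_0+x_1,0)$ and $F(y_0,y_1)=(y_0,y_0)$: both sides equal $((x_0+x_1)y_0,0)$. The relation $FV=2$ is more delicate: a direct computation gives $FV(x,y)=(x+y,x+y)=d(x,y)$ on the nose in $\pi_0\TRR^{n+1}(k;2)^{\phi\Z/2}$, and since $d$ raises the filtration by one, this vanishes in the associated graded, matching $2=0$.

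For the derivation axioms of $d=1+\sigma$, well-definedness is Proposition \ref{prop:operatorspowers}. For the Leibniz rule, expanding in characteristic $2$ yields the identity $d(xy)=d(x)y+xd(y)+d(x)d(y)$ in $\pi_0\TRR^{n+1}(k;2)^{\phi\Z/2}$; the correction $d(x)d(y)$ lies in $J^{p+q+2}_{\langle 2^n\rangle}$ and therefore vanishes in $J^{p+q+1}_{\langle 2^n\rangle}/J^{p+q+2}_{\langle 2^n\rangle}$. The identity $dd=0$ follows from $\sigma^2=\id$: in characteristic $2$, $(1+\sigma)^2=1+\sigma^2=0$. For $FdV=d$ one computes $dV(x,y)=(x+y,x+y)$ and $FdV(x,y)=(x+y,x+y)=d(x,y)$. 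Finally, the Frobenius relation $Fd\lambda\tau_n=(\lambda\tau_{n-1})(d\lambda\tau_{n-1})$ demands an explicit algebraic computation: I would lift $\lambda\tau_n(a)$ to $\tau_n(a\otimes 1)$, expand $Fd\tau_n(a\otimes 1)$ using Proposition \ref{prop:genTRR} and $F(x,y)=(x,x)$, and separately expand the product $\tau_{n-1}(a\otimes 1)\cdot d\tau_{n-1}(a\otimes 1)$ using the ring structure on $k\otimes_Sk$; both sides reduce to the same pair after simplifying via the $S$-bimodule relations.

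The main obstacle will be the Frobenius relation, whose verification hinges on careful bookkeeping of the iterated expressions $\phi^n(a\otimes 1)$ and $\phi^n(1\otimes a)$ and on the fact that only squares cross $\otimes_S$ freely. The remaining axioms are either formal or follow immediately from the filtration-raising property of $d$ together with the explicit formulas for $F$ and $V$.
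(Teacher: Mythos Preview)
Your proposal is correct and follows essentially the same route as the paper's proof: both reduce axioms i)--iv) to the Mackey/Tambara structure and the explicit formulas, observe that $FV=1+\sigma=d$ on the nose so that it vanishes in the associated graded, verify the Leibniz rule up to the filtration-raising defect $d(x)d(y)$, deduce $d^2=0$ from $\sigma^2=\id$, check $FdV=d$ by direct computation, and finally verify the Teichm\"uller relation by explicitly expanding both sides using $\tau_n(a\otimes 1)=(\phi^n(a\otimes 1),\phi^n(1\otimes a))$ and the identity $a^{2^n-2}\otimes a^2=a^{2^n}\otimes 1$ over $S$. The only cosmetic difference is that the paper phrases the $FdV$ step as $FdV=FV+F\sigma V$ with $F\sigma V=0$ by the double coset formula, whereas you compute the components directly; the content is the same.
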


\begin{proof}
First of all, the maps $R,F,V$ and $ d:=(1+\sigma)$ are well-defined on the quotients of the powers of the ideals by Proposition \ref{prop:operatorspowers}.
Axioms i)-iv) of Definition \ref{def:WittComplex} follow immediately from either the fact that $F,V$ and $\res^{D_{2^n}}_{C_{2^n}}$ are induced from the maps of a Mackey functor, or from their explicit formulas from Theorem \ref{thm:geomTRn} and Propositions \ref{prop:transfers} and \ref{prop:res}. This is except from the identity $FV=2$ (which in our case is zero), since by these arguments we only know that  $FV=1+\sigma$. However, for every $x\in J_{\langle 2^{n}\rangle}^q$, we have that
\[
FV(x)=x+\sigma(x)
\]
belongs to $J_{\langle 2^{n}\rangle}^{q+1}$ by Proposition \ref{prop:operatorspowers}, and it is therefore indeed zero in $J_{\langle 2^{n}\rangle}^q/J_{\langle 2^{n}\rangle}^{q+1}$.

Let us show axiom v).
To see that $d$ satisfies the Leibniz rule, let $x\in J_{\langle 2^{n}\rangle}^q/J_{\langle 2^{n}\rangle}^{q+1}$ and $y\in J_{\langle 2^{n}\rangle}^{q'}/J_{\langle 2^{n}\rangle}^{q'+1}$, and let us calculate
\begin{align*}
d(xy)+d(x) y+xd(y)&=xy+\sigma(x)\sigma(y)+(x+\sigma(x)) y+x(y+\sigma(y))\\
&=xy+\sigma(x)\sigma(y)+\sigma(x)y+x\sigma(y)=(x+\sigma(x)) (y+\sigma(y))\\
&=d(x)d(y).
\end{align*}
Since $d(x)$ belongs to $J_{\langle 2^{n}\rangle}^{q+1}$ and $d(y)$ to $J_{\langle 2^{n}\rangle}^{q'+1}$ by Proposition \ref{prop:operatorspowers}, we have that $d(x)d(y)$ belongs to $J_{\langle 2^{n}\rangle}^{q+q'+2}$, and therefore it vanishes in $J_{\langle 2^{n}\rangle}^{q+q'+1}/J_{\langle 2^{n}\rangle}^{q+q'+2}$.

Let us now verify the last three identities involving $d$ in axiom v). For the first one, let $x\in J_{\langle 2^{n}\rangle}^q/J_{\langle 2^{n}\rangle}^{q+1}$. Then
\[FdV(x)=FV(x)+F\sigma V(x)=FV(x)=(1+\sigma)(x)=d(x)\]
in $J_{\langle 2^{n}\rangle}^{q+1}/J_{\langle 2^{n}\rangle}^{q+2}$, where $F\sigma V(x)=0$ by the double coset formula (or by direct calculation). For the second identity, we have that
\[
d^2=(1+\sigma)^2=1+2\sigma+\sigma^2=2+2\sigma =0
\]
since $\sigma$ has order $2$. Finally, for the third one, let $a\in k=\W_{\langle 1\rangle}(k)$. On the one hand, by Proposition \ref{prop:res},
\begin{align*}
Fd\lambda\tau_n(a)&=Fd\tau_n(a\otimes 1)=F(\tau_n(a\otimes 1)+\sigma \tau_n(a\otimes 1))
\\&=\tau_n(a\otimes 1)+\sigma \tau_n(a\otimes 1)=(a^{2^{n}}\otimes 1+a^{2^{n}-1}\otimes a,a^{2^{n}-1}\otimes a+a^{2^{n}}\otimes 1)
\end{align*}
where the third equality holds by the formula for $F$ of Theorem \ref{thm:geomTRn}.
On the other hand
\begin{align*}
&(\lambda\tau_{n-1}(a))\cdot (d\lambda\tau_{n-1}(a))=\tau_{n-1}(a\otimes 1)\cdot (\tau_{n-1}(a\otimes 1)+\sigma\tau_{n-1}(a\otimes 1))
\\&=(a^{2^{n-1}}\otimes 1,a^{2^{n-1}-1}\otimes a)\cdot (a^{2^{n-1}}\otimes 1+a^{2^{n-1}-1}\otimes a,a^{2^{n-1}-1}\otimes a+a^{2^{n-1}}\otimes 1)
\\&=(a^{2^{n}}\otimes 1+a^{2^{n}-1}\otimes a,a^{2^{n}-2}\otimes a^2+a^{2^{n}-1}\otimes a),
\end{align*}
and these are equal since we are tensoring over $S$.
\end{proof}

\subsection{The Milnor conjecture for the de Rham-Witt complex}\label{sec:dRW}

Let us endow the sequence $J_{\langle 2^\bullet \rangle}^\ast/J_{\langle 2^\bullet \rangle}^{\ast+1}$ with the structure of a $2$-typical Witt complex of Proposition \ref{prop:WittComplex}.  We recall that, by definition, the $2$-typical de Rham-Witt complex $\W_{\langle 2^\bullet \rangle}\Omega^\ast_k$ of $k$ is the initial object in the category of  $2$-typical Witt complexes over $k$ (see \cite{Costeanu} and \cite{IbLarsDeRhamMixed}). Thus, there is a unique map of $2$-typical Witt complexes
\[
\W_{\langle 2^\bullet \rangle}\Omega^\ast_k\longrightarrow J_{\langle 2^\bullet \rangle}^\ast/J_{\langle 2^\bullet \rangle}^{\ast+1}.
\]
Let us denote by $\W_{\langle 2^\bullet \rangle}\Omega^\ast_k/2$ the degreewise cokernel of the multiplication by $2$ map. Since all the maps defining the structure of a Witt complex are additive, this is again a Witt-complex, where the map
\[
\W_{\langle 2^\bullet \rangle}(k)\longrightarrow \W_{\langle 2^\bullet \rangle}\Omega^0_k/2=\W_{\langle 2^\bullet \rangle}(k)/2
\]
is the quotient map. Since $2$ vanishes in $J_{\langle 2^\bullet \rangle}^\ast/J_{\langle 2^\bullet \rangle}^{\ast+1}$, the unique map above descends to a unique map of Witt-complexes 
\[
u\colon \W_{\langle 2^\bullet \rangle}\Omega^\ast_k/2\longrightarrow J_{\langle 2^\bullet \rangle}^\ast/J_{\langle 2^\bullet \rangle}^{\ast+1}.
\]

\begin{theorem}\label{thm:dRW}
The unique map of Witt-complexes $u\colon \W_{\langle 2^\bullet \rangle}\Omega^\ast_k/2\to J_{\langle 2^\bullet \rangle}^\ast/J_{\langle 2^\bullet \rangle}^{\ast+1}$
is an isomorphism.
\end{theorem}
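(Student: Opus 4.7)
The plan is to prove Theorem \ref{thm:dRW} by induction on $n$, where $\bullet = n$, fitting both sides into parallel short exact sequences and applying the five lemma. For the base case $n=0$, we have $\W_{\langle 1\rangle}\Omega^\ast_k/2 = \Omega^\ast_k/2$ and, by the definition of $J_{\langle 1\rangle}$ as $\ker\mu$, the isomorphism $\Omega^\ast_k/2 \cong J_{\langle 1\rangle}^\ast/J_{\langle 1\rangle}^{\ast+1}$ is the classical result of Arason recorded as Lemma \ref{lemma:trunc0}. A direct check on generators (sending $da$ to the class of $1\otimes a + a\otimes 1$) identifies this isomorphism with $u$ at $\bullet=0$.

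For the inductive step, we invoke \cite[Lemma 3.5]{Costeanu}, which for a $2$-typical Witt complex presents the truncated de Rham--Witt complex via a short exact sequence of the shape
\[
0 \longrightarrow V^n(\Omega^\ast_k/2) + dV^n(\Omega^{\ast-1}_k/2) \longrightarrow \W_{\langle 2^n\rangle}\Omega^\ast_k/2 \xrightarrow{\ R\ } \W_{\langle 2^{n-1}\rangle}\Omega^\ast_k/2 \longrightarrow 0.
\]
We produce a parallel exact sequence on the target side,
\[
0 \longrightarrow V^n(J_{\langle 1\rangle}^\ast/J_{\langle 1\rangle}^{\ast+1}) + dV^n(J_{\langle 1\rangle}^{\ast-1}/J_{\langle 1\rangle}^\ast) \longrightarrow J_{\langle 2^n\rangle}^\ast/J_{\langle 2^n\rangle}^{\ast+1} \xrightarrow{\ R\ } J_{\langle 2^{n-1}\rangle}^\ast/J_{\langle 2^{n-1}\rangle}^{\ast+1} \longrightarrow 0.
\]
Surjectivity of $R$ follows from Proposition \ref{prop:additivegen}: each additive generator of $J_{\langle 2^{n-1}\rangle}$ is the $R$-image of the corresponding generator of $J_{\langle 2^n\rangle}$, and this passes to the associated graded by Proposition \ref{prop:operatorspowers}. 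Identifying $\ker R$ is the main content: using the pullback description of Theorem \ref{thm:geomTRn} and the formula for $R$ from Proposition \ref{prop:transfers}, an element lies in $\ker R$ precisely when it can be expressed via $V^n$ and $\sigma V^n$ applied to $\pi_0\THR(k)^{\phi\Z/2} = k\otimes_S k$. Since $\sigma V^n = V^n + dV^n$ (where $d = 1+\sigma$), this gives the desired description once we restrict attention to the fundamental ideal using Corollary \ref{cor:idealgen}.

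With both exact sequences in hand, the map $u$ commutes with $R$, $V^n$, and $dV^n$ because it is a morphism of Witt complexes by construction. The quotient $\W_{\langle 2^{n-1}\rangle}\Omega^\ast_k/2 \to J_{\langle 2^{n-1}\rangle}^\ast/J_{\langle 2^{n-1}\rangle}^{\ast+1}$ is an isomorphism by the inductive hypothesis; the kernel term $V^n(\Omega^\ast_k/2) + dV^n(\Omega^{\ast-1}_k/2)$ maps isomorphically to $V^n(J_{\langle 1\rangle}^\ast/J_{\langle 1\rangle}^{\ast+1}) + dV^n(J_{\langle 1\rangle}^{\ast-1}/J_{\langle 1\rangle}^\ast)$ by applying the base case together with the Witt-complex relation $V(x)F(y) = V(xy)$ to rewrite both sides purely in terms of the $n=0$ pieces. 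The five lemma then yields the conclusion at level $n$.

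The principal obstacle is the identification of $\ker R$ at the level of the associated graded. The pullback formula of Theorem \ref{thm:geomTRn} entangles $V^n$ and $\sigma V^n$, while the $J$-adic filtration involves taking products with $\sigma$-fixed generators only modulo higher powers of the ideal. The key technical inputs are Corollary \ref{cor:idealgen}, which provides a multiplicative generating set of $J_{\langle 2^n\rangle}$ consisting of $\sigma$-fixed elements, together with the linearity relation $V(x)y = V(xF(y))$ and the fact that $FV = 1 + \sigma$ vanishes on the associated graded (verified in Proposition \ref{prop:WittComplex}). These tools let us reorganise any element of $\ker R$, modulo $J_{\langle 2^n\rangle}^{\ast+1}$, into the claimed sum of a $V^n$-part and a $dV^n$-part.
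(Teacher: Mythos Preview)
Your overall strategy matches the paper's: induction on $n$, base case via Lemma \ref{lemma:trunc0}, and the Costeanu exact sequence for the inductive step. Surjectivity of $u_n$ is indeed immediate from the generators. The gap is in your treatment of injectivity on the kernel term.

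You claim that the map between the kernel terms
\[
V^n(\Omega^q_k)+dV^n(\Omega^{q-1}_k)\longrightarrow V^n(J_{\langle 1\rangle}^q/J_{\langle 1\rangle}^{q+1})+dV^n(J_{\langle 1\rangle}^{q-1}/J_{\langle 1\rangle}^{q})
\]
is an isomorphism ``by applying the base case together with the Witt-complex relation $V(x)y=V(xF(y))$.'' This does not follow. The base case identifies $\Omega^\ast_k$ with $J_{\langle 1\rangle}^\ast/J_{\langle 1\rangle}^{\ast+1}$, but it says nothing about the \emph{relations} satisfied by $V^n$ and $dV^n$ in the two receiving objects. Concretely, you must show: if $a\in\Omega^q_k$, $b\in\Omega^{q-1}_k$ and $u_n(V^n(a)+dV^n(b))\in J_{\langle 2^n\rangle}^{q+1}$, then $V^n(a)+dV^n(b)$ is divisible by $2$ in $\W_{\langle 2^n\rangle}\Omega^q_k$. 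This is not a formal consequence of the Witt-complex axioms; it requires understanding how the $J$-adic filtration interacts with elements of the special form $(z,0)$ in the pullback model.

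The paper handles exactly this point with two technical lemmas. Lemma \ref{lemma:key2} shows that if $(z,0)\in J_{\langle 2^n\rangle}^{q+1}$ then in fact $z\in J_{\langle 1\rangle}^{q+2}$, a ``filtration jump'' that is not formal and relies on the explicit shape of the generators from Proposition \ref{prop:additivegen}. Lemma \ref{lemma:key1} then translates membership of such expressions in $J_{\langle 1\rangle}^{q+1}$ back into divisibility by $2$ in the de Rham--Witt complex, via a $2$-basis computation showing the relevant element lies in the image of the Frobenius. These two lemmas are the substance of the injectivity argument; your sketch for identifying $\ker R$ on the associated graded does not supply a substitute for them.
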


\begin{rem}\label{rem:lowcases}
Let us discuss a few special cases of this theorem. For $\ast=0$, the unique map $u$ is by construction the isomorphism
\[
\lambda\colon \W_{\langle 2^\bullet \rangle}\Omega^0_k/2=\W_{\langle 2^\bullet \rangle}(k)/2\stackrel{\cong}{\longrightarrow}\pi_0\TR(k;2)/2\cong \pi_0\TRR^{\bullet+1}(k;2)^{\phi\Z/2}/J_{\langle 2^\bullet \rangle}
\]
where the arrow is the isomorphism of \cite[Theorem F]{WittVect}.

On the other hand, for $\bullet=0$, the map $u$ is the unique map of commutative differential graded algebras
\[
\W_{\langle 1 \rangle}\Omega^\ast_k/2=\Omega^\ast_k\longrightarrow J^\ast_{\langle 1\rangle}/J^{\ast+1}_{\langle 1\rangle}.
\]
This is well-known to be an isomorphism, as claimed in \cite{Kato} (see e.g. \cite{Arason} for a proof, which is also recasted in Lemma \ref{lemma:trunc0} below). In particular, for $\ast=1$, this is equivalent to the fact that, since $k$ has characteristic $2$, a $\Z$-linear derivation out of $k$ is automatically $S$-linear (where we recall that $S\leq k$ is the subfield of squares).
\end{rem}

The rest of the section is dedicated to the proof of Theorem \ref{thm:dRW}.
The proof is by induction on $n$, by means of the exact sequences 
\[
\xymatrix@C=40pt{
\Omega^q_k\oplus\Omega^{q-1}_k\ar[r]^-{V^n+dV^n}
&
\W_{\langle 2^{n}\rangle}\Omega^q_k/2\ar[r]^-{R}
&
\W_{\langle 2^{n-1}\rangle}\Omega^q_k/2\ar[r]
&
0
}
\]
from \cite[Lemma 3.5]{Costeanu}, where $n,q\geq 1$.

The base case for the induction $n=1$ seems to be well-known to the experts, and is used without proof in \cite{Kato}. We recall the argument from \cite[Fact 1]{Arason} for the reader's convenience, and to introduce some notation that we will use in the proof of the induction step.

\begin{lemma}[\cite{Arason}]\label{lemma:trunc0}
The unique map $u\colon \Omega^\ast_k\longrightarrow J_{\langle 1\rangle}^\ast/J_{\langle 1\rangle}^{\ast+1}$  of commutative differential graded algebras is an isomorphism. 
\end{lemma}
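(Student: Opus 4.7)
The plan is to use a $2$-basis of $k$ over $S$ to exhibit explicit $k$-bases on both sides of $u$ and check that they match. The starting observation is that any $\mathbb{Z}$-linear derivation out of a field of characteristic $2$ kills every square, since $d(a^2)=2a\,da=0$; hence $\Omega^\ast_k=\Omega^\ast_{k/S}$. I will fix a $2$-basis $\{x_i\}_{i\in I}$ of $k$ over $S=k^2$, i.e.\ a subset such that the squarefree monomials $m_J:=\prod_{i\in J}x_i$ (for finite $J\subseteq I$) form an $S$-basis of $k$ and such that $\{dx_i\}_{i\in I}$ is a $k$-basis of $\Omega^1_k$; such a basis exists for any field of characteristic $2$. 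Consequently $\Omega^n_k$ is the free $k$-module on the ordered wedges $dx_{i_1}\wedge\cdots\wedge dx_{i_n}$ with $i_1<\cdots<i_n$.

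The next step is to compute $k\otimes_S k$ explicitly using this basis. Define $y_i:=x_i\otimes 1+1\otimes x_i\in J_{\langle 1\rangle}$. Since $x_i^2\in S$ we have $1\otimes x_i^2=x_i^2\otimes 1$, and since the characteristic is $2$,
\[y_i^2=(x_i\otimes 1)^2+(1\otimes x_i)^2=x_i^2\otimes 1+1\otimes x_i^2=2\,(x_i^2\otimes 1)=0.\]
Viewing $k\otimes_S k$ as a $k$-module via the left tensor factor, the elements $\{1\otimes m_J\}_J$ form a $k$-basis. Substituting $1\otimes x_i=x_i\cdot 1+y_i$ and expanding shows that $\{\prod_{i\in J}y_i\}_J$ is also a $k$-basis, via a unipotent change-of-basis matrix with respect to the inclusion order on finite subsets of $I$. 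Hence the canonical $k$-algebra map from the exterior $k$-algebra $\Lambda^\ast_k\langle y_i\rangle_{i\in I}$ to $k\otimes_S k$ is an isomorphism.

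Under this identification, $J_{\langle 1\rangle}$ is the augmentation ideal of the exterior algebra, so $J_{\langle 1\rangle}^n/J_{\langle 1\rangle}^{n+1}$ is the free $k$-module on the ordered products $y_{i_1}\cdots y_{i_n}$. Now $u$ is the unique CDGA map which is the identity in degree $0$, and since the differential on the target is $d=1+\sigma$ with $\sigma$ the swap, $u(dx_i)=x_i\otimes 1+1\otimes x_i=y_i$. Thus $u$ sends the basis $dx_{i_1}\wedge\cdots\wedge dx_{i_n}$ of $\Omega^n_k$ bijectively onto the basis $y_{i_1}\cdots y_{i_n}$ of $J_{\langle 1\rangle}^n/J_{\langle 1\rangle}^{n+1}$, and is therefore an isomorphism in every degree.

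The only nontrivial ingredient is the change-of-basis step, which uses the algebraic identity $y_i^2=0$ together with the triangularity of the expansion; the rest is bookkeeping. I expect this to be the main point to write out carefully, as it is what reduces the structure of $k\otimes_S k$ to that of an exterior algebra and thereby matches it degree-by-degree with the de Rham complex. A more conceptual alternative would be to invoke formal smoothness of $k$ over $S$ and identify the graded pieces with $\Omega^n_{k/S}$ via the conormal sequence, but the direct $2$-basis argument avoids that machinery and yields explicit generators that feed nicely into the induction of Theorem \ref{thm:dRW}.
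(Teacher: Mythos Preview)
Your proof is correct and follows essentially the same approach as the paper's: both choose a $2$-basis $\{x_i\}_{i\in I}$, set $y_i=\Delta(x_i):=1\otimes x_i+x_i\otimes 1$, and pass from the obvious $k$-basis $\{1\otimes x^\xi\}$ of $k\otimes_S k$ to the basis $\{\prod_{i\in\xi}\Delta(x_i)\}$ via a unipotent (inclusion-ordered) change of basis, then read off that $u$ sends the standard basis of $\Omega^q_k$ to that of $J_{\langle 1\rangle}^q/J_{\langle 1\rangle}^{q+1}$. Your phrasing in terms of an exterior-algebra isomorphism $k\otimes_S k\cong\Lambda^\ast_k\langle y_i\rangle$ is a nice packaging of the same computation, and you make the relation $y_i^2=0$ explicit where the paper leaves it implicit.
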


\begin{proof}
For every $a\in k$, let us denote $\Delta(a):=1\otimes a+a\otimes 1\in k\otimes_S k$.
We note that the map $u$ is necessarily given by the formula
\[u(ada_1\dots da_q):=a\Delta(a_1)\cdot \dots\cdot \Delta(a_q).\]

In order to show that this is an isomorphism, we choose suitable bases of the source and target as $k$-vector spaces.
Let $\{x_{i}\}_{i\in I}$ be a $2$-basis of $k$. We recall that this is a set of elements of $k$ whose differentials $\{dx_i\}_{i\in I}$ form a basis of the $k$-vector space $\Omega^1_k$ or, equivalently, such that the elements
\[
x^\xi:=\prod_{i\in \xi}x_i
\]
form a basis of $k$ as an $S$-vector space, where $\xi$ ranges through the finite subsets of $I$ (see \cite[Chapter 0, \S21.4]{EGAIV}). Here we use the convention that $x^\emptyset=1$, and we will write $\xi\subset^f \! I$ if $\xi$ is a finite subset of $I$.
It is easy to see that the set $\{1\otimes x^\xi\}_{\xi\subset^f I}$ is a basis of $k\otimes_S k$ as a $k$-vector space, where $k$ acts by multiplication on the left tensor factor. Now let us denote
\[
\Delta(x)^{\xi}:=\prod_{i\in \xi}\Delta(x_i)
\]
for every $\xi\subset^f\! I$ (with the convention that $\Delta(x)^{\emptyset}=1\otimes 1$). These elements satisfy the identities
\begin{align}
\Delta(x)^{\xi}&=\sum_{\nu\subset\xi}x^{\xi\setminus\nu}\cdot(1\otimes x^\nu)\label{form:changebasis1}
\\
1\otimes x^{\xi}&=\sum_{\nu\subset\xi}x^{\xi\setminus\nu}\cdot\Delta(x)^{\nu}\label{form:changebasis2}
\end{align}
for every finite subset $\xi$ of $I$. It follows that $\{\Delta(x)^{\xi}\}_{\xi\subset^f I}$ is also a basis of $k\otimes_S k$. Since the multiplication map sends $\Delta(x)^{\xi}$ to a non-zero element of $k$ if and only if $\xi=\emptyset$, it follows that $\{\Delta(x)^{\xi}\}_{\emptyset\neq \xi\subset^f I}$ is a basis for $J_{\langle 1\rangle}$ as a $k$-vector space with respect to multiplication on the left factor.
It then readily follows that the elements $\Delta(x)^{\xi}$ with $|\xi|\geq q$ form a basis of $J_{\langle 1\rangle}^q$, and that the elements $\Delta(x)^{\xi}$ with $|\xi|= q$ form a basis of $J_{\langle 1\rangle}^q/J_{\langle 1\rangle}^{q+1}$. Since for every $\xi\subset I$ with $|\xi|=q$, the map $u$ sends a basis element $(dx)^\xi:=\prod_{i\in \xi}dx_i$ of $\Omega^q_k$ to $\Delta(x)^{\xi}$, the claim follows.
\end{proof}

The induction step for proving Theorem \ref{thm:dRW} will rely on the following two key technical Lemmas. Let us choose a $2$-basis $\{x_{i}\}_{i\in I}$ of $k$ as in the proof of Lemma \ref{lemma:trunc0}.

\begin{lemma}\label{lemma:key1}
Let $q\geq 1$, and let $\mu_\nu\in k$ for every subset $\nu\subset I$ with $|\nu|=q-1$. Suppose that 
\[\sum_{
\begin{smallmatrix}
\nu\subset I
\\
|\nu|=q-1
\end{smallmatrix}
}\Delta(\mu_\nu)\Delta(x)^{\nu} \in J_{\langle 1 \rangle}^{q+1}.\]
 Then $\sum_{\nu\subset I, |\nu|=q-1}V(\mu_\nu (dx)^\nu)$ is divisible by $2$ in $\W_{\langle 2\rangle}\Omega^{q-1}_k$.
\end{lemma}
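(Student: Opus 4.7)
The plan is to recast the hypothesis as the closedness condition $d\omega=0$ for $\omega:=\sum_\nu \mu_\nu (dx)^\nu \in \Omega^{q-1}_k$, and then to construct an element $\tilde\omega \in \W_{\langle 2\rangle}\Omega^{q-1}_k$ such that $F(\tilde\omega)=\omega$. This gives the desired divisibility, because $V(\omega)=V(F(\tilde\omega))=V(1)\cdot\tilde\omega=2\tilde\omega$, where the last equality uses the $V$-linearity axiom $V(\alpha F(\beta))=V(\alpha)\beta$ from Definition~\ref{def:WittComplex} together with the identity $V(1)=2$ in $\W_{\langle 2\rangle}(k)$, which is a direct Witt-vector calculation: in characteristic $2$ the carry in $(1,0)+(1,0)=(0,1)=V(1)$ is nonzero.

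For the translation step, I would expand $\mu_\nu=\sum_\xi s_{\nu,\xi}\,x^\xi$ in the $p$-basis (with $s_{\nu,\xi}\in S$) and compute $\Delta(\mu_\nu)\Delta(x)^\nu$ modulo $J_{\langle 1\rangle}^{q+1}$ in the $k$-basis $\{\Delta(x)^\pi\}_{|\pi|=q}$ supplied by the proof of Lemma~\ref{lemma:trunc0}. Using the change-of-basis formula~\eqref{form:changebasis2} together with $\Delta(x_i)^2=0$ in $k\otimes_Sk$, a direct calculation gives
\[
\Delta(\mu_\nu)\Delta(x)^\nu \;\equiv\; \sum_{i\notin\nu}(\partial_i\mu_\nu)\,\Delta(x)^{\{i\}\cup\nu} \pmod{J_{\langle 1\rangle}^{q+1}},
\]
where $\partial_i\mu_\nu:=\sum_{\xi\ni i}s_{\nu,\xi}x^{\xi\setminus\{i\}}$ is the standard partial derivative in the $p$-basis. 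Summing over $\nu$ and comparing with $d\omega=\sum_{\nu,i\notin\nu}(\partial_i\mu_\nu)(dx)^{\{i\}\cup\nu}$ under the isomorphism $u$ from Lemma~\ref{lemma:trunc0}, the hypothesis is equivalent to $d\omega=0$.

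To construct $\tilde\omega$, write $\mu_\nu=\sum_\xi t^2_{\nu,\xi}\,x^\xi$ (with $t_{\nu,\xi}\in k$, using injectivity of Frobenius on $k$) and decompose $\omega=\omega_A+\omega_B$, where $\omega_A$ collects the terms with $\xi\subseteq\nu$. Each such term of $\omega_A$ lifts explicitly via
\[
F\bigl([t_{\nu,\xi}]\cdot\textstyle\prod_{i\in\xi}d[x_i]\cdot\prod_{j\in\nu\setminus\xi}dV(x_j)\bigr) \;=\; t^2_{\nu,\xi}\,x^\xi\,(dx)^\nu,
\]
using $F([a])=a^2$, $F(d[b])=b\,db$, and $F(dV(c))=dc$. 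For $\omega_B$, the closedness $d\omega=0$ produces the relations $\sum_{k\in\pi\setminus\eta}t^2_{\pi\setminus k,\eta\cup k}=0$ for every pair $(\pi,\eta)$ with $|\pi|=q$. These should permit, after reindexing by $\pi:=\nu\cup\xi$ and extracting square roots, the identification of $\omega_B$ with an exact form $d\alpha\in\Omega^{q-1}_k$, which then lies in the image of $F$ via $d\alpha=F(dV(\alpha))$.

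The main obstacle is the combinatorial step in the previous paragraph. In the base case $q=2$, the closedness conditions yield clean pairwise identities $t_{\{j\},\eta\cup\{i\}}=t_{\{i\},\eta\cup\{j\}}$; the off-diagonal part of $\omega_B$ reindexes immediately as $d\bigl(\sum_\pi T_\pi^2\,x^\pi\bigr)$, and the remaining diagonal terms can be shown to vanish by a direct coefficient comparison. For general $q$, the closedness provides only sum-of-squares relations among the $t^2_{\nu,\xi}$, so producing an exact primitive for $\omega_B$ will require a more delicate argument, likely an induction on $|\xi\setminus\nu|$ combined with systematic absorption of correction terms into $d$-boundaries of auxiliary $(q-2)$-forms.
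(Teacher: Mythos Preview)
Your overall strategy is exactly the paper's: translate the hypothesis to $d\omega=0$ for $\omega=\sum_\nu\mu_\nu(dx)^\nu$, show $\omega\in\mathrm{Im}(F)$, and conclude via $V\omega=VF\tilde\omega=V(1)\tilde\omega=2\tilde\omega$. Your translation step is correct and agrees with the relations~\eqref{snudelta} the paper derives, and your explicit lift of $\omega_A$ is fine.

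The genuine gap is the treatment of $\omega_B$. You assert that the closedness relations force $\omega_B$ to be exact, but you only verify this for $q=2$ and explicitly flag the general case as unfinished. This is precisely the hard combinatorial core of the lemma, and the paper handles it by a different organising principle. Rather than splitting by whether $\xi\subseteq\nu$, the paper groups all terms of $\omega$ by $\beta:=\nu\cap\delta$ and $\lambda:=\nu\cup\delta$, factoring out $x^\beta(dx)^\beta=F\bigl(\prod_{i\in\beta}d\tau_1(x_i)\bigr)$. For fixed $(\beta,\lambda)$ with $\beta\subsetneq\lambda$, the remaining factor is a sum over $\nu$ with $\beta\subseteq\nu\subseteq\lambda$; picking any $j_0\in\lambda\setminus\beta$ and applying the closedness relation with $\xi=\zeta\cup\{j_0\}$, $\alpha=\delta_\zeta\setminus\{j_0\}$ lets one rewrite this sum as
\[
\sum_{\substack{\beta\subseteq\nu\subseteq\lambda\\ j_0\in\nu}} s^2_{\nu,\delta_\nu}\,d\bigl(x^{(\lambda\setminus\nu)\cup\{j_0\}}\bigr)(dx)^{(\nu\setminus\beta)\setminus\{j_0\}},
\]
which is manifestly in $\mathrm{Im}(F)$ as a product of squares and differentials. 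Note that the paper does \emph{not} aim for exactness of any piece: the factors $x^\beta(dx)^\beta$ have nonzero Cartier class, so the decomposition is into $F$-images rather than boundaries. Your exactness claim for $\omega_B$ is plausibly true (the $\xi=\nu$ part of $\omega_A$ should represent $C^{-1}C[\omega]$, and the $\xi\subsetneq\nu$ terms of $\omega_A$ are each individually exact), but establishing $C[\omega]=\sum_\nu t_{\nu,\nu}(dx)^\nu$ would still require a combinatorial argument of comparable difficulty.
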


\begin{proof}
Let us write $\mu_\nu\in k$ uniquely as a linear combination $\mu_\nu=\sum_{\delta\subset^f I}s_{\nu,\delta}^2x^\delta$ with respect to the basis $\{x^\delta\}_{\delta\subset^f I}$ of $k$ as an $S$-vector space. Let us notice that, since we are tensoring over $S$, the map $\Delta\colon k\to k\otimes_Sk$ is $S$-linear.
Then by applying formula (\ref{form:changebasis2}):
\begin{align*}
\Delta(\mu_\nu)&=\sum_{\delta\subset^f I}s_{\nu,\delta}^2\Delta(x^\delta)=\sum_{\delta\subset^f I}s_{\nu,\delta}^2(1\otimes x^\delta+x^{\delta}\cdot (1\otimes 1))
\\
&=\sum_{\delta\subset^f I}s_{\nu,\delta}^2\sum_{\gamma\subset\delta}x^{\delta\setminus \gamma}\Delta(x)^\gamma+\sum_{\delta\subset^f I}s_{\nu,\delta}^2x^{\delta}\cdot \Delta(x)^{\emptyset}
=\sum_{\emptyset\neq \gamma\subset\delta\subset^f I}s_{\nu,\delta}^2x^{\delta\setminus \gamma}\Delta(x)^\gamma,
\end{align*}
where the last equality holds since the sum $\sum_{\delta\subset^f I}s_{\nu,\delta}^2x^{\delta}\cdot \Delta(x)^{\emptyset}$ is equal to the term $\gamma=\emptyset$ in the previous sum.
It follows that
\begin{align*}
\sum_{
\begin{smallmatrix}
\nu\subset I
\\
|\nu|=q-1
\end{smallmatrix}
}\Delta(\mu_\nu)\Delta(x)^{\nu}
&=\sum_{
\begin{smallmatrix}
\nu\subset I
\\
|\nu|=q-1
\end{smallmatrix}
}
(\sum_{\emptyset\neq \gamma\subset\delta^f\subset I}s_{\nu,\delta}^2x^{\delta\setminus \gamma}\Delta(x)^\gamma)\Delta(x)^{\nu}
=\sum_{
\begin{smallmatrix}
\nu\subset I
\\
|\nu|=q-1
\end{smallmatrix}
}
\sum_{
\begin{smallmatrix}
\emptyset\neq \gamma\subset\delta\subset^f I
\\
\gamma\cap\nu=\emptyset
\end{smallmatrix}
}s_{\nu,\delta}^2x^{\delta\setminus \gamma}\Delta(x)^{\gamma\amalg\nu},
\end{align*}
where in the last sum $\gamma$ and $\nu$ are disjoint, since $\Delta(a)\Delta(a)=0$ in $k\otimes_Sk$ for all $a\in k$.
Since, by assumption, this element belongs to $J_{\langle 1 \rangle}^{q+1}$, and the $\Delta(x)^\xi$ with $|\xi|=q+1$ are a basis for $ J_{\langle 1 \rangle}^{q+1}$, we must have that, for every $\nu\subset I$ with $|\nu|=q-1$ and every $j\in I\setminus \nu$ (corresponding to the terms $\gamma=\{j\}$ in the sum above),  
\[
\sum_{
\begin{smallmatrix}
\nu\subset I
\\
|\nu|=q-1
\end{smallmatrix}
}
\sum_{\delta\subset^f I}\sum_{
j\in \delta\setminus\nu
}
s_{\nu,\delta}^2x^{\delta\setminus j}\Delta(x)^{j\amalg\nu}
=0.
\]
Since the $\Delta(x)^{\xi}$ are linearly independent, we have that, for every subset $\xi\subset I$ with $|\xi|=q$ (corresponding to the terms $\epsilon=j\amalg\nu$), the coefficient of $\Delta(x)^{\xi}$ must vanish, i.e. that
\[
0=\sum_{j\in \xi}
\sum_{
\begin{smallmatrix}
\delta\subset^f I
\\
j\in \delta
\end{smallmatrix}
}
s_{\xi\setminus j,\delta}^2x^{\delta\setminus j}
=
\sum_{\alpha\subset^f I}
\sum_{
j\in \xi\setminus \alpha
}
s_{\xi\setminus j,\alpha\amalg j}^2x^{\alpha}.
\]
Since the $x^{\alpha}$ form a basis of $k$ as an $S$-vector space, we find that for every finite $\alpha\subset I$ and $\xi\subset I$ with $|\xi|=q$ the corresponding coefficient must vanish:
\begin{equation}\label{snudelta}
\sum_{
j\in \xi\setminus \alpha
}
s_{\xi\setminus j,\alpha\amalg j}^2=0.
\end{equation}
We now show that these relations among the coefficients $s_{\nu,\delta}$ imply that $V(\sum_{\nu\subset I, |\nu|=q-1}\mu_\nu (dx)^\nu)$ is divisible by $2$ in $\W_{\langle 2\rangle}\Omega^{q-1}_k$. We do this by showing that the sum $\sum_{\nu\subset I, |\nu|=q-1}\mu_\nu (dx)^\nu$ is in the image of the Frobenius map. The claim will then follow since, by the linearity of $V$ of axiom iv) of Definition \ref{def:WittComplex},
\[
V(F(z))=V(1)\cdot z
\]
for every $z\in \W_{\langle 2\rangle}\Omega^{q-1}_k$,
and $V(1)=2\in \W_{\langle 2\rangle}(k)$ since $k$ has characteristic $2$.

By rearranging the terms and grouping pairs $(\nu,\delta)$ with the same intersection $\beta$, we can write 
\begin{align}\label{groupBeta}
\begin{split}
\sum_{
\begin{smallmatrix}
\nu\subset I
\\
|\nu|=q-1
\end{smallmatrix}
}
\mu_\nu (dx)^\nu
&=
\sum_{
\begin{smallmatrix}
\nu\subset I
\\
|\nu|=q-1
\end{smallmatrix}
}
\sum_{\delta\subset^f I}s^2_{\nu,\delta}x^{\delta} (dx)^\nu
=
\sum_{\beta\subset^f I}
\sum_{
\begin{smallmatrix}
\nu\subset I
\\
|\nu|=q-1
\end{smallmatrix}
}
\sum_{
\begin{smallmatrix}
\delta\subset^f I
\\
\delta\cap\nu=\beta
\end{smallmatrix}
}
s^2_{\nu,\delta}x^{\delta}  (dx)^\nu
\\&=
\sum_{\beta\subset^f I}
x^{\beta}  (dx)^\beta
\sum_{
\begin{smallmatrix}
\nu\subset I
\\
|\nu|=q-1
\end{smallmatrix}
}
\sum_{
\begin{smallmatrix}
\delta\subset^f I
\\
\delta\cap\nu=\beta
\end{smallmatrix}
}
s^2_{\nu,\delta}x^{\delta\setminus\beta}  (dx)^{\nu\setminus\beta}.
\end{split}
\end{align}
Each term $x^{\beta}  (dx)^\beta$ is in the image of the Frobenius, since by axiom v) of Definition \ref{def:WittComplex} (we recall that in the de Rham-Witt complex the map $\lambda$ is the identity)
\begin{align*}
x^\beta(dx)^\beta&=\prod_{i\in\beta}x_idx_i=\prod_{i\in\beta}\tau_0(x_i)d\tau_0(x_i)=\prod_{i\in\beta}F(d\tau_1(x_i))=F(\prod_{i\in\beta}d\tau_1(x_i)).
\end{align*}
It is therefore sufficient to show that for every fixed $\beta\subset I$, the double sum in equation (\ref{groupBeta}) is in the image of the Frobenius.
Let us now group those terms by the union $\lambda$ of $\nu$ and $\delta$,  and write
\begin{align}\label{form:triplesum}
\sum_{
\begin{smallmatrix}
\nu\subset I
\\
|\nu|=q-1
\end{smallmatrix}
}
\sum_{
\begin{smallmatrix}
\delta\subset^f I
\\
\delta\cap\nu=\beta
\end{smallmatrix}
}
s^2_{\nu,\delta}x^{\delta\setminus\beta}  (dx)^{\nu\setminus\beta}
&=
\sum_{\lambda\subset^f I}
\sum_{
\begin{smallmatrix}
\nu\subset I
\\
|\nu|=q-1
\end{smallmatrix}
}
\sum_{
\begin{smallmatrix}
\delta\subset^f I
\\
\delta\cap\nu=\beta
\\
\delta\cup \nu=\lambda
\end{smallmatrix}
}
s^2_{\nu,\delta}x^{\delta\setminus\beta}  (dx)^{\nu\setminus\beta}.
\end{align}
We now show that for every fixed $\beta,\lambda\subset^f I$, the inner double sum in (\ref{form:triplesum}) is in the image of the Frobenius. Notice that, after fixing $\beta$ and $\lambda$, the subset $\delta$ is determined by $\nu$, and let us write $\delta_\nu:=(\lambda\setminus \nu)\amalg \beta$. That is, we show that
\begin{align}\label{form:anothersum}
\sum_{
\begin{smallmatrix}
\nu\subset I
\\
|\nu|=q-1
\\
\beta\subset\nu\subset\lambda
\end{smallmatrix}
}
s^2_{\nu,\delta_\nu}x^{\delta_\nu\setminus\beta}  (dx)^{\nu\setminus\beta}=\sum_{
\begin{smallmatrix}
\nu\subset I
\\
|\nu|=q-1
\\
\beta\subset\nu\subset\lambda
\end{smallmatrix}
}
s^2_{\nu,\delta_\nu}x^{\lambda\setminus \nu}  (dx)^{\nu\setminus\beta}
\end{align}
is in the image of the Frobenius. Let us first treat the case where $\beta=\lambda$ (with $q-1$ elements, otherwise the sum is trivially zero). In this case the sum is just $s^2_{\beta,\beta}$, and every square of $k$ is in the image of the Frobenius since $s^2=F(\tau_1(s))$. Thus, suppose that $\beta$ is a proper subset of $\lambda$, and choose an element $j_0\in \lambda\setminus\beta$. We claim that (\ref{form:anothersum}) is equal to
\[
\sum_{
\begin{smallmatrix}
\nu\subset I
\\
|\nu|=q-1
\\
\beta\subset\nu\subset\lambda
\\
j_0\in\nu
\end{smallmatrix}
}
s^2_{\nu,\delta_\nu}d(x^{(\lambda\setminus \nu)\amalg j_0})(dx)^{(\nu\setminus\beta)\setminus j_0}.
\]
This will conclude the proof, since any square is in the image of the Frobenius by the argument above, and so is each differential by the relation $d=FdV$ of axiom v) (observe that $\lambda\setminus \nu$ and $\nu\setminus\beta$ are disjoint, with union $\lambda\setminus\beta$, so that no summands contains the square of a differential). To see that the last claim holds, let us notice that by iterating the Leibniz rule
\[
d(x^{\xi})=\sum_{j\in\xi}x^{\xi\setminus j}dx_j
\]
for every finite subset $\xi\subset I$,
and therefore
\begin{align*}
&\sum_{
\begin{smallmatrix}
\nu\subset I
\\
|\nu|=q-1
\\
\beta\subset\nu\subset\lambda
\\
j_0\in\nu
\end{smallmatrix}
}
s^2_{\nu,\delta_\nu}d(x^{(\lambda\setminus \nu)\amalg j_0})(dx)^{(\nu\setminus\beta)\setminus j_0}
=
\sum_{
\begin{smallmatrix}
\nu\subset I
\\
|\nu|=q-1
\\
\beta\subset\nu\subset\lambda
\\
j_0\in\nu
\end{smallmatrix}
}
\sum_{j\in (\lambda\setminus \nu)\amalg j_0}
s^2_{\nu,\delta_\nu}x^{((\lambda\setminus \nu)\amalg j_0)\setminus j}(dx_{j})(dx)^{(\nu\setminus\beta)\setminus j_0}
\\
&=
\sum_{
\begin{smallmatrix}
\nu\subset I
\\
|\nu|=q-1
\\
\beta\subset\nu\subset\lambda
\\
j_0\in\nu
\end{smallmatrix}
}
\big(
s^2_{\nu,\delta_\nu}x^{\lambda\setminus \nu}(dx)^{\nu\setminus\beta}
+
\sum_{j\in \lambda\setminus \nu}
s^2_{\nu,\delta_\nu}x^{((\lambda\setminus \nu)\amalg j_0)\setminus j}(dx)^{(\nu\setminus\beta)\setminus j_0\amalg j}
\big)
\\
&=
\big(\sum_{
\begin{smallmatrix}
\nu\subset I
\\
|\nu|=q-1
\\
\beta\subset\nu\subset\lambda
\\
j_0\in\nu
\end{smallmatrix}
}
s^2_{\nu,\delta_\nu}x^{\lambda\setminus \nu}(dx)^{\nu\setminus\beta}
\big)
+
\big(\sum_{
\begin{smallmatrix}
\nu\subset I
\\
|\nu|=q-1
\\
\beta\subset\nu\subset\lambda
\\
j_0\in\nu
\end{smallmatrix}
}
\sum_{j\in \lambda\setminus \nu}
s^2_{\nu,\delta_\nu}x^{((\lambda\setminus \nu)\amalg j_0)\setminus j}(dx)^{(\nu\setminus\beta)\setminus j_0\amalg j}
\big).
\end{align*}
By setting $\zeta=(\nu\setminus j_0)\amalg j$ in the second sum, and observing that $\delta_\nu=(\delta_{\zeta}\setminus j_0)\amalg j$, we find that this is equal to
\begin{align}\label{form:anotherone}
&\big(\sum_{
\begin{smallmatrix}
\nu\subset I
\\
|\nu|=q-1
\\
\beta\subset\nu\subset\lambda
\\
j_0\in\nu
\end{smallmatrix}
}
s^2_{\nu,\delta_\nu}x^{\lambda\setminus \nu}(dx)^{\nu\setminus\beta}
\big)
+
\big(\sum_{
\begin{smallmatrix}
\zeta\subset I
\\
|\zeta|=q-1
\\
\beta\subset\zeta\subset\lambda
\\
j_0\notin\zeta
\end{smallmatrix}
}
\sum_{j\in \zeta\setminus\beta}
s^2_{(\zeta\amalg j_0)\setminus j,(\delta_{\zeta}\setminus j_0)\amalg j}x^{((\lambda\setminus (\zeta\amalg j_0\setminus j))\amalg j_0)\setminus j}(dx)^{((\zeta\amalg j_0\setminus j)\setminus\beta)\setminus j_0\amalg j}
\big)
\\&=
\big(\sum_{
\begin{smallmatrix}
\nu\subset I
\\
|\nu|=q-1
\\
\beta\subset\nu\subset\lambda
\\
j_0\in\nu
\end{smallmatrix}
}
s^2_{\nu,\delta_\nu}x^{\lambda\setminus \nu}(dx)^{\nu\setminus\beta}
\big)
+
\big(\sum_{
\begin{smallmatrix}
\zeta\subset I
\\
|\zeta|=q-1
\\
\beta\subset\nu\subset\lambda
\\
j_0\notin\zeta
\end{smallmatrix}
}
\big(
\sum_{j\in \zeta\setminus\beta}
s^2_{(\zeta\amalg j_0)\setminus j,(\delta_{\zeta}\setminus j_0)\amalg j}\big)x^{\lambda\setminus \zeta}(dx)^{\zeta\setminus\beta}
\big).
\end{align}
Finally, by applying the relation (\ref{snudelta}) for $\xi=\zeta\amalg j_0$ and $\alpha=\delta_{\zeta}\setminus j_0$, so that $\xi\setminus\alpha=(\zeta\setminus\beta)\amalg j_0$, we find that
\[
\sum_{j\in \zeta\setminus\beta}
s^2_{(\zeta\amalg j_0)\setminus j,(\delta_{\zeta}\setminus j_0)\amalg j}=s^2_{\zeta,\delta_\zeta},
\]
which identifies (\ref{form:anotherone}) and (\ref{form:anothersum}).
\end{proof}

Let us remark that, if $(x,y)$ is one of the generators of $J_{\langle 2^{n} \rangle}$ of Proposition \ref{prop:additivegen}, then $x$ and $y$, when regarded as elements of $k\otimes_S k$, belong to $J_{\langle 1 \rangle}$. Thus, if an element $(z,w)$ of $\pi_0\TRR^{n+1}(k;2)^{\phi\Z/2}$ belongs to $J_{\langle 2^{n} \rangle}^{q+1}$ for some $q\geq 0$, then $z$ and $w$ belong to $J_{\langle 1 \rangle}^{q+1}$. The following lemma strengthen this property when the second component $w$ is zero.

\begin{lemma}\label{lemma:key2}
Let $z\in (k\otimes_S k)^{C_2}$ be such that $(z,0)$ belongs to $J_{\langle 2^{n} \rangle}^{q+1}$ for some $q\geq 0$. Then $z$ belongs to $J_{\langle 1 \rangle}^{q+2}$.
\end{lemma}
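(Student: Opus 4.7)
The plan is to bootstrap the naive bound $z\in J_{\langle 1\rangle}^{q+1}$ (which already follows from pushing forward along the iterated Frobenius) by one extra power of the ideal, using the derivation $d=1+\sigma$ of Proposition \ref{prop:operatorspowers}. The point is that the vanishing of the second coordinate of $(z,0)$ forces the components of $(z,0)$ and of $\sigma(z,0)$ to be independent, so their sum is genuinely deeper in the filtration.

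First I would apply $\sigma$: by the formula for the Weyl action in Theorem \ref{thm:geomTRn}, $\sigma(z,0)=(0,z)$, hence $(1+\sigma)(z,0)=(z,z)$. Proposition \ref{prop:operatorspowers} says that $1+\sigma$ sends $J_{\langle 2^n\rangle}^{q+1}$ into $J_{\langle 2^n\rangle}^{q+2}$, so the diagonal element $(z,z)$ lies in $J_{\langle 2^n\rangle}^{q+2}$.

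Next I would push this down to $\pi_0\THR(k)^{\phi\Z/2}=k\otimes_Sk$ via the iterated Frobenius $F^n$. Again by Theorem \ref{thm:geomTRn}, $F$ sends an element $(x,y)$ of the pullback to $(x,x)$ in the smaller pullback, so iterating collapses $(z,z)$ to $z\in k\otimes_S k$. On the other hand, as recalled at the opening of \S\ref{sec:fundideal}, $F$ commutes with the restriction map to Witt vectors modulo $2$, so $F(J_{\langle 2^l\rangle})\subseteq J_{\langle 2^{l-1}\rangle}$; since $F$ is a ring homomorphism, the same inclusion holds for any power of the ideals, and iterating yields $F^n(J_{\langle 2^n\rangle}^{q+2})\subseteq J_{\langle 1\rangle}^{q+2}$. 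Combining the two gives $z=F^n(z,z)\in J_{\langle 1\rangle}^{q+2}$.

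There is no substantive obstacle: the whole argument consists of applying two already-established facts (that $d$ raises the filtration by one, and that $F^n$ transports ideals of $\pi_0\TRR^{n+1}(k;2)^{\phi\Z/2}$ into ideals of $k\otimes_S k$ while fixing diagonal elements), so the proof will be short once the right trick, namely replacing $(z,0)$ by $d(z,0)=(z,z)$, is identified.
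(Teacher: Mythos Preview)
Your argument is correct and is genuinely more elegant than the paper's. The paper proves the lemma by a direct combinatorial computation: it expands $(z,0)$ as a sum of $(q+1)$-fold products of the explicit generators of $J_{\langle 2^n\rangle}$ from Proposition~\ref{prop:additivegen}, sorts these products into three types according to which kinds of generators occur, uses the vanishing of the second coordinate to equate the type~ii) and type~iii) contributions, and then verifies by hand (with explicit manipulations of $\phi_i$ and $\Delta$) that each resulting piece of $z$ lies in $J_{\langle 1\rangle}^{q+2}$. Your route bypasses all of this by noticing that the single structural fact ``$d=1+\sigma$ raises the $J$-filtration by one'' (Proposition~\ref{prop:operatorspowers}) already encodes the gain of an extra power, and that $F^n$ then reads off the first coordinate while preserving powers of the fundamental ideals. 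What the paper's approach buys is self-containment at this point of the argument: it does not appeal back to Proposition~\ref{prop:operatorspowers}, whose proof in turn rests on Corollary~\ref{cor:idealgen}; your approach trades that for a two-line proof.

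One small remark on citations: Proposition~\ref{prop:operatorspowers} is formally stated for its own index $q\geq 2$, so for the case $q=0$ of the lemma (where you need $1+\sigma\colon J_{\langle 2^n\rangle}\to J_{\langle 2^n\rangle}^2$) you should point either to the proof of that proposition, which works verbatim for all $q\geq 0$, or to Proposition~\ref{prop:WittComplex}, where $d=1+\sigma$ is used as a degree-raising operator in every degree. This is a cosmetic issue, not a gap.
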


\begin{proof}
By the characterisation of $J_{\langle 2^{n} \rangle}$ of Proposition \ref{prop:additivegen}, we see that $(x,0)$ can be expressed as a sum of elements $u_1\dots u_{q+1}$, with $u_j\in J_{\langle 2^{n} \rangle}$, of three types:
\begin{enumerate}
\item At least one of the $u_j$ is of the form $V^{n-i}\tau_i(a\otimes b)+V^{n-i}\tau_i(ab\otimes 1)$ for $a,b\in k$ and $0\leq  i\leq n-1$. We can then write this generator in components as
\[
u_1\dots u_{q+1}=(w_1+w_1',0)u_2\dots u_{q+1}
\]
where $w_1$ is the first component of $V^{n-i}\tau_i(a\otimes b)$ and $w_1'$ the first component of $V^{n-i}\tau_i(ab\otimes 1)$.
\item All of the $u_j$ are of the form $V^{n-i}\tau_i(a\otimes b)+\sigma V^{n-i}\tau_i(a\otimes b)$ for $a,b\in k$ and $0\leq i\leq n-1$. Then each factor is diagonal, that is $u_j=(v_j,v_j)$ where $v_j$ is in $J_{\langle 1\rangle}$, and
\[
u_1\dots u_{q+1}=(v_1\dots v_{q+1},v_1\dots v_{q+1}).
\]
\item It is not of the first two types. In this case, at least one of the $u_j$ is of the form $\tau_n(a\otimes b)+\tau_n(ab\otimes 1)$ and the other factors are diagonal. We can then write such a generator as
\begin{align*}
u_1\dots u_{q+1}&=(t_1,t_1')\dots (t_l,t_l')(s_{l+1},s_{l+1})\dots (s_{q+1},s_{q+1})
\\&=(t_1\dots t_ls_{l+1}\dots s_{q+1},t_1'\dots t_l's_{l+1}\dots s_{q+1})
\end{align*}
for some $1\leq l\leq q+1$, where $t_j=\phi_n(a_j\otimes b_j)+\phi_n(a_jb_j\otimes 1)$, $t_j'=\phi_n(b_j\otimes a_j)+\phi_n(1\otimes a_jb_j)$, and $s_j$ is in $J_{\langle 1\rangle}$.
\end{enumerate}
Thus, let us write $(z,0)$ as a sum of these types of generators, where we omit the indexing from the sums to make this expression more digestible:
\[
(z,0)=\sum (w_1+w_1',0)u_2\dots u_{q+1}
+\sum(v_1\dots v_{q+1},v_1\dots v_{q+1})
+\sum (t_1\dots t_ls_{l+1}\dots s_{q+1},t_1'\dots t_l's_{l+1}\dots s_{q+1})
\]
Since the second component of $(z,0)$ is null, we must have that
\[
\sum v_1\dots v_{q+1}=\sum t_1'\dots t_l's_{l+1}\dots s_{q+1}.
\]
By replacing the left-hand side in the first component above we have that $z$ is, by denoting $r_j$ the first component of $u_j$, of the form
\begin{align*}
z&=\sum (w_1+w_1')r_2\dots r_{q+1}+\sum t_1'\dots t_l's_{l+1}\dots s_{q+1}+\sum t_1\dots t_ls_{l+1}\dots s_{q+1}
\\&=\sum(w_1+w_1')r_2\dots r_{q+1}+\sum (t_1\dots t_l+t_1'\dots t_l')s_{l+1}\dots s_{q+1}.
\end{align*}
Thus, since the $r_j$ and $s_j$ belong to $J_{\langle 1 \rangle}$, to conclude the proof it is sufficient to show that $w_1+w_1'$ belongs to $J_{\langle 1 \rangle}^2$ and that $(t_1\dots t_l+t_1'\dots t_l')$ belongs to $J_{\langle 1 \rangle}^{l+1}$. For the first case, we have that
\begin{align*}
w_1+w_1'&=\phi_i(a\otimes b)+\phi_i(b\otimes a)+\phi_i(ab\otimes 1)+\phi_i(1\otimes ab)
\\&=
a^{2^i-1}b^{2^i}\otimes a+b^{2^i-1}a^{2^i}\otimes b+(ab)^{2^i-1}\otimes ab+(ab)^{2^i}\otimes 1
\\&=(ab)^{2^i-1}(b\otimes a+a\otimes b+1\otimes ab+ab\otimes 1)
\\&=(ab)^{2^i-1}(1\otimes a+a\otimes 1)(1\otimes b+b\otimes 1)
\end{align*}
which indeed belongs to $J_{\langle 1 \rangle}^2$. For the second case, we have that
\begin{align*}
(t_1\dots t_l+t_1'\dots t_l')&=\prod_{j=1}^l(\phi_n(a_j\otimes b_j)+\phi_n(a_jb_j\otimes 1))+\prod_{j=1}^l(\phi_n(b_j\otimes a_j)+\phi_n(1\otimes a_jb_j))
\\&=\prod_{j=1}^l(a_j^{2^n-1}b_j^{2^n}\otimes a_j+(a_jb_j)^{2^n-1}\otimes a_jb_j)+\prod_{j=1}^l(b_j^{2^n-1}a_j^{2^n}\otimes b_j+(a_jb_j)^{2^n}\otimes 1)
\\&
=\prod_{j=1}^l\big(((a_jb_j)^{2^n-1}\otimes a_j)\cdot  (b_j\otimes 1+1\otimes b_j)\big)
+
\prod_{j=1}^l\big(
((a_j)^{2^n}(b_j)^{2^n-1}\otimes 1)
\cdot  (b_j\otimes 1+1\otimes b_j)
\big)
\\&=
\big(\prod_{j=1}^l((a_jb_j)^{2^n-1}\otimes a_j)+\prod_{j=1}^l((a_j)^{2^n}(b_j)^{2^n-1}\otimes 1)\big)
\cdot
\prod_{j=1}^l  (b_j\otimes 1+1\otimes b_j)
\\&=
\big(\prod_{j=1}^l(a_jb_j)^{2^n-1}(1 \otimes a_j)+\prod_{j=1}^l(a_jb_j)^{2^n-1}(a_j\otimes 1)\big)
\cdot
\prod_{j=1}^l  (b_j\otimes 1+1\otimes b_j)
\\&=
(\prod_{j=1}^l(a_jb_j)^{2^n-1})\big(1 \otimes (\prod_{j=1}^la_j)+(\prod_{j=1}^la_j)\otimes 1\big)
\cdot
\prod_{j=1}^l  (b_j\otimes 1+1\otimes b_j)
\end{align*}
which belongs to $J_{\langle 1 \rangle}^{l+1}$.
\end{proof}

\begin{proof}[Proof of Theorem \ref{thm:dRW}]
By Remark \ref{rem:lowcases}, in degree $q=0$ the map $u_n$ is an isomorphism for all $n\geq 0$. Thus, let $q\geq 1$.
By Lemma \ref{lemma:trunc0}, the map $u_0\colon \Omega^q_k\longrightarrow J_{\langle 1\rangle}^q/J_{\langle 1\rangle}^{q+1}$ is an isomorphism for every $q\geq 1$. Thus, let $n\geq 1$, assume that $u_{n-1}$ is an isomorphism, and let us show that $u_{n}$ is an isomorphism.
Since $u_{n}$ is a map of Witt complexes, it clearly hits all the generators of $J_{\langle 2^{n} \rangle}^q$ from Proposition \ref{prop:additivegen}, and it is therefore surjective. To see that it is injective, consider the commutative diagram with exact rows
\[
\xymatrix@C=50pt{
\Omega^q_k\oplus\Omega^{q-1}_k\ar[r]^-{V^{n}+dV^n}
&
(\W_{\langle 2^{n}\rangle}\Omega^q_k)/2\ar[r]^-{R}\ar[d]^-{u_{n}}
&
(\W_{\langle 2^{n-1}\rangle}\Omega^q_k)/2\ar[r]\ar[d]^{u_{n-1}}_{\cong}
&
0
\\
&
J_{\langle 2^{n} \rangle}^q/J_{\langle 2^{n} \rangle}^{q+1}
\ar[r]^-R&
J_{\langle 2^{n-1} \rangle}^q/J_{\langle 2^{n-1} \rangle}^{q+1}\ar[r]&0
}
\]
where the top row is exact by \cite[Lemma 3.5]{Costeanu}. It then suffices to show that $u_{n}$ is injective when restricted to the image of the top left horizontal map $V^n+dV^n$. Thus, let $a\in  \Omega^q_k$ and $b\in  \Omega^{q-1}_k$, and suppose that $u_{n}(V^n(a)+dV^n(b))$ can be represented by an element of $J_{\langle 2^{n} \rangle}^{q+1}$. We need to show that $c:=V^n(a)+dV^n(b)$ is divisible by $2$ in $\W_{\langle 2^{n}\rangle}\Omega^q_k$. Let us explicitly calculate $u_{n}(V^n(a)+dV^n(b))$. Given a $2$-basis $\{x_{i}\}_{i\in I}$ of $k$, let us write
\[
a=\sum_{
\begin{smallmatrix}
\xi\subset I
\\
|\xi|=q
\end{smallmatrix}
}\lambda_\xi(dx)^\xi \ \ \ \ \ \ \ \mbox{and}\ \ \ \ \ \ \ \ 
b=\sum_{
\begin{smallmatrix}
\nu\subset I
\\
|\nu|=q-1
\end{smallmatrix}
}\mu_\nu(dx)^\nu
\]
where $\lambda_\xi,\mu_\nu\in k$ and $(dx)^\xi=\prod_{i\in \xi}dx_i$. Since $u$ is a map of Witt complexes, we must have that
\[
u_{n}(V^n(a)+dV^n(b))=V^n(u_0(a))+dV^n(u_0(b))=
\sum_{
\begin{smallmatrix}
\xi\subset I
\\
|\xi|=q
\end{smallmatrix}
}V^n(\lambda_\xi\Delta(x)^\xi) +
\sum_{
\begin{smallmatrix}
\nu\subset I
\\
|\nu|=q-1
\end{smallmatrix}
} dV^n(\mu_\nu \Delta(x)^\nu).
\]
Recall from Proposition \ref{prop:transfers} that for every $x\otimes y\in k\otimes_S k$ we have that $V^n(x\otimes y)$ is represented by $(\tran(x\otimes y),0)$ in $\pi_0\TRR^{n+1}(k;2)^{\phi\Z/2}$, where $\tran(x\otimes y)=x\otimes y+y\otimes x$. Thus since $d=1+\sigma$ we have
\begin{align*}
u_{n}(V^n(a)+dV^n(b))&=
\sum_{
\begin{smallmatrix}
\xi\subset I
\\
|\xi|=q
\end{smallmatrix}
}(\tran(\lambda_\xi\Delta(x)^\xi),0) +
\sum_{
\begin{smallmatrix}
\nu\subset I
\\
|\nu|=q-1
\end{smallmatrix}
} (\tran(\mu_\nu \Delta(x)^\nu),\tran(\mu_\nu \Delta(x)^\nu)).
\end{align*}
By choosing $i\in \xi$, we can write
\begin{align*}
\tran(\lambda_\xi\Delta(x)^\xi)&=\tran(\lambda_\xi\Delta(x_i)\Delta(x)^{\xi\setminus i})=\tran(\lambda_\xi\Delta(x_i))\Delta(x)^{\xi\setminus i}=\tran(\lambda_\xi\otimes x_i+\lambda_\xi x_i\otimes 1)\Delta(x)^{\xi\setminus i}
\\&=
(\lambda_\xi\otimes x_i+\lambda_\xi x_i\otimes 1+x_i\otimes \lambda_\xi+1\otimes \lambda_\xi x_i)\Delta(x)^{\xi\setminus i}=\Delta(\lambda_\xi)\Delta(x_i)\Delta(x)^{\xi\setminus i}
\\&=\Delta(\lambda_\xi)\Delta(x)^{\xi}
\end{align*}
where the second equality holds since $\Delta(x)^{\xi\setminus i}$ is fixed by the involution.
Similarly $\tran(\mu_\nu\Delta(x)^\nu)=\Delta(\mu_\nu)\Delta(x)^{\nu}$ (which is obvious in the case where $q=1$ and $\nu=\emptyset$). Thus we find that
\begin{align*}
u_{n}(V^n(a)+dV^n(b))&=
\sum_{
\begin{smallmatrix}
\xi\subset I
\\
|\xi|=q
\end{smallmatrix}
}(\Delta(\lambda_\xi)\Delta(x)^{\xi},0) +
\sum_{
\begin{smallmatrix}
\nu\subset I
\\
|\nu|=q-1
\end{smallmatrix}
} (\Delta(\mu_\nu)\Delta(x)^{\nu},\Delta(\mu_\nu)\Delta(x)^{\nu})
\\&=
\big(
\sum_{
\begin{smallmatrix}
\xi\subset I
\\
|\xi|=q
\end{smallmatrix}
}\Delta(\lambda_\xi)\Delta(x)^{\xi}
+
\sum_{
\begin{smallmatrix}
\nu\subset I
\\
|\nu|=q-1
\end{smallmatrix}
}\Delta(\mu_\nu)\Delta(x)^{\nu}
,
\sum_{
\begin{smallmatrix}
\nu\subset I
\\
|\nu|=q-1
\end{smallmatrix}
}\Delta(\mu_\nu)\Delta(x)^{\nu}
\big),
\end{align*}
and this element is by assumption in $J_{\langle 2^{n} \rangle}^{q+1}$. Let us analyse the two components separately, starting from the second one. As observed above Lemma \ref{lemma:key2}, these components must in fact belong to $J_{\langle 1 \rangle}^{q+1}$, and therefore
\[
\sum_{
\begin{smallmatrix}
\nu\subset I
\\
|\nu|=q-1
\end{smallmatrix}
}\Delta(\mu_\nu)\Delta(x)^{\nu} \in J_{\langle 1 \rangle}^{q+1}.
\]
By Lemma \ref{lemma:key1}, $V(\mu_\nu (dx)^\nu)$ vanishes in the de Rham Witt complex modulo $2$, and therefore so does
\[
dV^n(b)=dV^{n-1}(\sum_{
\begin{smallmatrix}
\nu\subset I
\\
|\nu|=q-1
\end{smallmatrix}
}V(\mu_\nu (dx)^\nu)).
\]
Our original element $c=V^n(a)+dV^n(b)$ is then equal to $V^n(a)$ in the de Rham-Witt complex modulo $2$, and the map $u_n$ sends this element to
\[
u_n(c)=V^n(a)=\big(
\sum_{
\begin{smallmatrix}
\xi\subset I
\\
|\xi|=q
\end{smallmatrix}
}\Delta(\lambda_\xi)\Delta(x)^{\xi}
,0
\big).
\]
Moreover, this element is by assumption in $J_{\langle 2^{n} \rangle}^{q+1}$. By applying Lemma \ref{lemma:key2}, the first component $\sum_{
\begin{smallmatrix}
\xi\subset I
\\
|\xi|=q
\end{smallmatrix}
}\Delta(\lambda_\xi)\Delta(x)^{\xi}$ in fact belongs to $J_{\langle 1 \rangle}^{q+2}$. Again by Lemma \ref{lemma:key1}, we find that 
\[
c=V^n(a)=V^{n-1}V(\sum_{
\begin{smallmatrix}
\xi\subset I
\\
|\xi|=q
\end{smallmatrix}
}\lambda_\xi(dx)^{\xi})=0\]
in $(\W_{\langle 2^{n}\rangle}\Omega^q_k)/2$, proving that $u_n$ is injective.
\end{proof}

\subsection{The Milnor conjecture and TCR}\label{sec:TCR}

Let us recall that the $2$-typical topological cyclic homology spectrum $\TC(k;2)$ of $k$ can be defined as the equaliser
\[
\xymatrix{\TC(k;2)\ar[r]&\TR(k;2)\ar@<.5ex>[r]^{\id}\ar@<-.5ex>[r]_{F}&\TR(k;2)}
\]
of the identity and the Frobenius map of $\TR(k;2)$. Let us denote $\W_{\langle 2^{\infty}\rangle}\Omega^\ast_k$ the limit of $\W_{\langle 2^{n}\rangle}\Omega^\ast_k$ over the map $R$, and define $\nu_\ast^{dRW/2}(k;2)$ and $\epsilon_\ast^{dRW/2}(k;2)$ respectively as the equaliser and coequaliser of the parallel group homomorphisms
\[
\xymatrix{\nu^\ast_{dRW/2}(k;2)\ar[r]&(\W_{\langle 2^{\infty}\rangle}\Omega^\ast_k)/2\ar@<.5ex>[r]^{\id}\ar@<-.5ex>[r]_{F}&(\W_{\langle 2^{\infty}\rangle}\Omega^\ast_k)/2\ar[r]&\epsilon^\ast_{dRW/2}(k;2)
}.
\]
Then, since the parallel arrows are ring homomorphisms, $\nu^\ast_{dRW/2}(k;2)$ is a graded ring, and $\epsilon^\ast_{dRW/2}(k;2)$ is a graded $\nu^\ast_{dRW/2}(k;2)$-module (where the action on the latter is defined via either of the maps $\id$ or $F$).
We now prove a version of the Milnor conjecture for $\nu^\ast_{dRW/2}(k;2)$ and $\epsilon^\ast_{dRW/2}(k;2)$, which describes these in terms of the graded ring associated to the kernel of the restriction map
\[
K:=\ker\big(\res^{\Z/2}_e\colon \pi_0\TCR(k;2)^{\phi\Z/2}\longrightarrow (\pi_0\TC(k;2))^{\Z/2}/Im(1+w)\big),
\]
where the involution on $\pi_0\TC(k;2)$ is induced by the involution underlying the $\Z/2$-spectrum $\TCR(k;2)$.
The restriction map is defined as we did at the beginning of \S\ref{sec:fundideal}. Let us also define the $\pi_{0}\TCR(k;2)^{\phi\Z/2}$-module
\[
T_{-1}:=\pi_{-1}\TCR(k;2)^{\phi\Z/2},
\]
so that the quotients $K^{\ast}T_{-1}/K^{\ast+1}T_{-1}$ form a graded $K^{\ast}/K^{\ast+1}$-module.

\begin{theorem}\label{thm:MilnorTCR} Let $k$ be a field of characteristic $2$.
There is an isomorphism of graded rings 
\[\nu_{dRW/2}^\ast(k;2)\cong K^\ast/K^{\ast+1},\]
and an isomorphism of graded $K^\ast/K^{\ast+1}$-modules
\[
\epsilon^\ast_{dRW/2}(k;2)\cong K^{\ast}T_{-1}/K^{\ast+1}T_{-1}.
\]
\end{theorem}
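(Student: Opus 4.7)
The plan is to combine, as suggested in the introduction, Kato's Milnor conjecture at the prime $2$, Theorem~\ref{thmintro:TCR}, and the identification from \cite[Proposition~2.26]{CMM}.

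First I would identify the relevant ring and module in classical terms. By Corollary~\ref{cor:geomTCRk} (the explicit form of Theorem~\ref{thmintro:TCR}), there is a ring isomorphism $\pi_0\TCR(k;2)^{\phi\Z/2}\cong \W^s(k)$ carrying the restriction map $\res^{\Z/2}_e$ to the rank homomorphism $\W^s(k)\to \Z/2$; hence $K$ is identified with the fundamental ideal $I = \ker(rk)$. The same corollary identifies $T_{-1}\cong \W^q(k)$, compatibly with the $\W^s(k)$-module structure by naturality of its four-term sequence.

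Next, Kato's Milnor conjecture at the prime $2$ in \cite[Theorem~(2)]{Kato} provides canonical isomorphisms of the graded ring $I^\ast/I^{\ast+1}$ and the graded $I^\ast/I^{\ast+1}$-module $I^\ast\W^q(k)/I^{\ast+1}\W^q(k)$ with, respectively, the equaliser $\nu^\ast(k)$ and the coequaliser $\epsilon^\ast(k)$ of the projection $\pi$ and the inverse Cartier operator $C^{-1}$ on $\Omega^\ast_k \to \Omega^\ast_k/d\Omega^{\ast-1}_k$. Under the identifications of the previous paragraph this gives
\[
K^\ast/K^{\ast+1} \cong \nu^\ast(k) \qquad\text{and}\qquad K^\ast T_{-1}/K^{\ast+1}T_{-1} \cong \epsilon^\ast(k).
\]
Finally, \cite[Proposition~2.26]{CMM} identifies $\nu^\ast_{dRW/2}(k;2)$ and $\epsilon^\ast_{dRW/2}(k;2)$ with $\nu^\ast(k)$ and $\epsilon^\ast(k)$ respectively, completing the proof.

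The main obstacle will be verifying that the chain of isomorphisms constructed above genuinely respects the graded-ring and graded-module structures. Each individual step is natural, but ensuring their composites match the multiplicative structures requires checking that canonical generators correspond to each other. I would resolve this by comparing the filtration $K^\ast$ on $\pi_0\TCR(k;2)^{\phi\Z/2}$ directly with the $I$-adic filtration of $\W^s(k)$, using the explicit description of $K$ and its generators from \S\ref{sec:geomTRR} together with Propositions~\ref{prop:genTRR} and~\ref{prop:additivegen}, and analogously for $T_{-1}$. Once the filtrations are matched on generators (essentially the $d\log$-type elements), compatibility with Kato's and CMM's isomorphisms reduces to known facts about Milnor K-theory modulo $2$ for fields of characteristic $2$.
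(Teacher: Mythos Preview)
Your overall plan matches the paper's: reduce to $\nu^\ast(k)$ and $\epsilon^\ast(k)$ via \cite[Proposition~2.26]{CMM}, then invoke Kato's \cite[Theorem~(2)]{Kato}, after identifying $\pi_0\TCR(k;2)^{\phi\Z/2}$ with $\W^s(k)$ and $T_{-1}$ with $\W^q(k)$ via Corollary~\ref{cor:geomTCRk}. The gap is in the sentence ``carrying the restriction map $\res^{\Z/2}_e$ to the rank homomorphism $\W^s(k)\to\Z/2$; hence $K$ is identified with the fundamental ideal $I$.'' Corollary~\ref{cor:geomTCRk} does \emph{not} give you this: it only computes the homotopy groups via the $(\pi,\phi)$ exact sequence, and says nothing about the map $\res^{\Z/2}_e$ into $(\pi_0\TC(k;2))^{\Z/2}/\mathrm{Im}(1+w)$. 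The target of that restriction is $\pi_0\TC(k;2)$, for which we have no good description when $k$ is non-perfect, so one cannot simply read off that $K$ corresponds to $I$ ``by naturality.'' The paper's author is explicit about this: ``As we do not have a good handle of $\pi_0\TC(k;2)$ for a general field $k$, we found ourselves unable to prove this by direct calculation.''

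The paper's resolution, which is the actual content of the proof, is to bring in the real trace map $\tr\colon\KR(k)\to\TCR(k;2)$. One then uses the commutative square comparing $\res^{\Z/2}_e$ with the rank map on $\pi_0\KR(k)^{\phi\Z/2}\cong\W^s(k)$, together with an explicit computation (via the section $s$ of the dihedral nerve and Morita invariance) showing that the composite $\W^s(k)\to k\otimes_Sk$ sends $\langle a\rangle$ to $a^{-1}\otimes a$. This is what identifies the two filtrations. Your proposed alternative---matching filtrations on generators using Propositions~\ref{prop:genTRR} and~\ref{prop:additivegen}---does not help here: those results concern $\TRR$ and the ideals $J_{\langle 2^n\rangle}$, whereas the difficulty is purely about $\res^{\Z/2}_e$ on $\TCR$, and there is no analogue of Proposition~\ref{prop:res} available for $\TC$.
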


\begin{proof}
Let $\nu^\ast(k)$ and  be $\epsilon^\ast(k)$ be respectively the equaliser and coequaliser of the projection map and the inverse Cartier operator
\[
\xymatrix{\nu^\ast(k)\ar[r]&\Omega^\ast_k\ar@<.5ex>[r]^-{\pi}\ar@<-.5ex>[r]_-{C^{-1}}&\Omega^\ast_k/d(\Omega^{\ast-1}_k)\ar[r]&\epsilon^\ast(k)
}.
\]
The map $R$ of the de Rham-Witt complex induces multiplicative maps $\nu_{dRW/2}^\ast(k;2)\to \nu^\ast(k)$ and $\epsilon_{dRW/2}^\ast(k;2)\to \epsilon^\ast(k)$, which are isomorphisms by the proof of \cite[Proposition 2.26]{CMM}. Moreover, by \cite[Theorem (2)]{Kato}, the graded ring associated to the fundamental ideal $I$ of the symmetric Witt group is isomorphic to $\nu^\ast(k)$, and the graded module $I^\ast \W^q(k)/I^{\ast+1} \W^q(k)$ to $\epsilon^\ast(k)$. Thus, since  \cite[Theorem (1)]{Kato} and Corollary \ref{cor:geomTCRk} identify $\W^s(k)$ and $\pi_0\TCR(k;2)^{\phi\Z/2}$, as rings, with the equaliser of 
\[
\xymatrix{
 (k\otimes_S k)^{C_2}
\ar@<.5ex>[r]^-\pi\ar@<-.5ex>[r]_-\phi&
(k\otimes_S k)^{C_2}/Im(1+w)
},
\]
and $\W^q(k)$ and $T_{-1}$, as modules, with their coequaliser, it suffices to show that $I$ and $K$ correspond to the same ideal under these identifications. 

For the symmetric Witt group, the isomorphism with the equaliser is given by the unique additive map that sends the rank $1$ form $\langle a\rangle$, with $a\in k^\times$, to $a^{-1}\otimes a$. For $\pi_0\TCR(k;2)^{\phi\Z/2}$, it is induced by the map
\[
\pi_0\TCR(k;2)^{\phi\Z/2}\stackrel{R}{\longrightarrow}\pi_0\TRR^{2}(k;2)^{\phi\Z/2}\stackrel{c}{\longrightarrow} \pi_0(\THR(k)^{\phi\Z/2})^{C_2},
\]
followed by the identification of the target with $(k\otimes_S k)^{C_2}$ from Proposition \ref{prop:geofixgenuine}. Let us note that, after including the fixed points into $k\otimes_Sk$, this is the map
\[
\pi_0\TCR(k;2)^{\phi\Z/2}\stackrel{R}{\longrightarrow} \pi_0\THR(k)^{\phi\Z/2}\cong k\otimes_Sk,
\]
where the isomorphism is from \cite[Theorem 5.1]{THRmodels} (there it is stated for the fixed points, but since the transfer map of $k$ is zero the isomorphism descends to the geometric fixed points). 
Thus, in order to compare $I$ and $K$, it suffices to show that, under the isomorphism of Corollary \ref{cor:geomTCRk}, the restriction map from $\pi_0\TCR(k;2)^{\phi\Z/2}$ to $(\pi_0\TC(k;2))^{\Z/2}/Im(1+w)$ sends $a^{-1}\otimes a$ to $1$. As we do not have a good handle of $\pi_0\TC(k;2)$ for a general field $k$, we found ourselves unable to prove this by direct calculation. 

Instead, we can employ the existence of a trace map of $\Z/2$-equivariant spectra $\tr\colon \KR(k)\to \TCR(k;2)$ which lifts the trace map $\K(k)\to \TC(k;2)$ from \cite{BHM}. This trace map is constructed in the forthcoming paper \cite{HNS} in the setting of Poincar\'e $\infty$-categories.
For the purpose of our paper, we content ourselves with giving a point-set construction of this trace map in the case of rings with involution, as carried out in Proposition \ref{trace} below.
In fact, we need very little from this trace map: since $\tr\colon \KR(k)\to \TCR(k;2)$ is a map of $\Z/2$-equivariant spectra and $\W^{s}(k)\cong \pi_0\KR(k)^{\phi\Z/2}$, it induces a commutative square
\[
\xymatrix{
\W^{s}(k)\ar[d]_{rk}\ar@{-->}[r]^-{\tr}&\pi_0\TCR(k;2)^{\phi\Z/2}\ar[d]^-{\res^{\Z/2}_e}\ar[r]^-R&\pi_0\THR(k)^{\phi\Z/2}\cong k\otimes_Sk
\\
\Z/2\ar[r]^-{\tr}&(\pi_0\TC(k;2))^{\Z/2}/Im(1+w)
}
\]
where the bottom map is induced by the usual trace map from \cite{BHM}. The composite on the top row sends the rank $1$ form $\langle a\rangle$ to $a^{-1}\otimes a$, as proved in  Proposition \ref{trace}. It follows that the top trace map must be an isomorphism, and since the bottom map is a ring homomorphism and therefore injective, the respective vertical kernels $I$ and $K$ are then isomorphic.
\end{proof}

\begin{rem}
In the proof of Theorem \ref{thm:MilnorTCR}, we are using the Milnor conjecture twice: once in order to identify $\nu^\ast(k)$ with the graded ring of $I$, and then in order to identify $I$ with the kernel of $\pi-\phi$. It seems plausible that one could find a proof of the theorem which does not use Kato's Theorems. Define $W$ and $T$ respectively as the equaliser and coequaliser of 
\[
\xymatrix{W\ar[r]&k\otimes_S k\ar@<.5ex>[r]^-{\pi}\ar@<-.5ex>[r]_-{\phi}&(k\otimes_S k)/Im(1+w)\ar[r]&T
},
\]
and $K$ as the kernel of the multiplication map $\mu\colon W\to \Z/2$. One can then try to directly show that the induced sequence
\[
0\to K^{n}/K^{n+1}\to J^{n}/J^{n+1}\xrightarrow{\pi-\phi} J^{n}/((J^{n+1}+B)\cap J^n)\to K^{n}T/K^{n+1}T\to 0
\]
remains exact, where $B$ is the subgroup of $k\otimes_Sk$ generated by the elements $a\otimes b+b\otimes a$. This would then prove Theorem \ref{thm:MilnorTCR} because, under the isomorphism $\Omega^\ast_k\cong J^\ast/J^{\ast+1}$, the kernel of the middle map corresponds to $\nu^\ast(k)$, and the cokernel to $\epsilon^\ast(k)$ (see \cite[Fact 6]{Arason}).
\end{rem}

\appendix
\section{The trace map for rings with involution}\label{sec:trace}

Let us finish the paper with a construction of a $\Z/2$-equivariant lift of the trace map $\tr\colon \K\to \TC(-;p)$, for every prime $p$. 
The trace was first constructed by B\"okstedt-Hsiang-Madsen in \cite{BHM} as a natural transformation $\K\to \TC$ on the category of ring spectra. This construction has been extended to various settings, most notably as a natural transformation of functors from stable infinity categories, see e.g. \cite{BGTmult}.

A $\Z/2$-equivariant extension of this map as a natural transformation $\KR\to \TCR$ of functors from Poincar\'e categories will appear in forthcoming work of Harpaz-Nikolaus-Shah \cite{HNS}.
For the purpose of this article, it will be more than sufficient to define the $\Z/2$-equivariant trace map on the category of discrete rings with involution. We will give a construction in line with the construction of the Dennis trace as carried out in \cite[\S2.6]{IbSurvey}. After restricting down to $\THR$ this construction agrees with the one from \cite{DO}, which was defined for ring spectra with involution.

Let $A$ be a ring with involution $w\colon A^{op}\to A$, and $\GL_n(A)$ the group of invertible $n\times n$-matrices with the involution that sends $M$ to $M^\ast:=w(M)^T$, where $(-)^T$ denotes the matrix transposition and $w$ is applied to $M$ entrywise. The set of fixed points of $\GL_n(A)$ is the set of symmetric matrices
\[\GL_n(A)^{\Z/2}=\{M\in \GL_n(k)\ | \ M^\ast=M\}\]
and $\GL_n(A)$ acts on it by $g\cdot M=gMg^\ast$. We let $B^\sigma \GL_n(A)$ be the classifying space $B\GL_n(A)$ with the involution of \cite[Proposition 1.1.3]{BF}. Its $\Z/2$-fixed-points space is the bar construction of the action of $\GL_n(A)$ on $\GL_n(A)^{\Z/2}$ above (see also \cite[\S 2.1]{DO} for the details). For the purpose of this paper, we define $\KR(A)$ to be the $\Z/2$-equivariant group-completion of the $\Z/2$-equivariant $E_{\infty}$-monoid with involution 
\[
\coprod_{n\geq 0}B^\sigma \GL_n(A),
\]
where the monoid operation is induced by the direct sum of matrices. This is in fact the classifying space of the symmetric monoidal category with duality of finite dimensional free $A$-modules, and therefore indeed a  $\Z/2$-equivariant $E_{\infty}$-monoid.
By construction, $\pi_0(\KR(A)^{\Z/2})$ is the group-completion of the commutative monoid
\[
\coprod_{n\geq 0}\pi_0((B^\sigma \GL_n(A))^{\Z/2})\cong \coprod_{n\geq 0}\GL_n(A)^{\Z/2}/\GL_n(A),
\]
which is the Grothendieck-Witt group $\GW^s(A)$ of symmetric forms of free $A$-modules. The transfer map is induced by the functor that sends a free module of rank $n$ to the hyperbolic matrix of size $2n$, and therefore $\pi_0(\KR(A)^{\phi\Z/2})$ is the symmetric Witt group $W^{s}(A)$ (again of free $A$-modules).

Let us also recall from \cite[Theorem 5.1]{THRmodels} that there is an isomorphism of abelian groups
\[\pi_0(\THR(A)^{\Z/2})\cong (A^{\Z/2}\otimes A^{\Z/2})/T\]
where $A^{\Z/2}$ is the subgroup of fixed points of the involution $w$, and the quotient is by the subgroup $T$ generated by the elements of the form (i) and (ii) from \cite[Theorem 5.1]{THRmodels}. In particular, for $A=k$ a ring of characteristic $2$ with trivial involution, this is $k\otimes_Sk$, and since the transfer map $(a+w(a))\otimes 1$ of \cite[Theorem 5.1]{THRmodels} is in this case zero, we have as well that $\pi_0(\THR(k)^{\phi \Z/2})\cong k\otimes_Sk$.

\begin{prop}\label{trace} Let $A$ be a ring with involution.
For every prime $p$, there is a map of $\Z/2$-spectra $\tr\colon \KR(A)\to \TCR(A;p)$ which forgets to the $K$-theoretic trace map of \cite{BHM}. The composite
\[
\GW^s(A)=\pi_0(\KR(A)^{\Z/2})\xrightarrow{\tr} \pi_0(\TCR(A;2)^{\Z/2})\xrightarrow{R} \pi_0(\THR(A)^{\Z/2})\cong (A^{\Z/2}\otimes A^{\Z/2})/T
\]
sends the element of $\GW^s(A)$ represented by a symmetric form $x$ on $A^{\oplus n}$ to
\[
\tr(x)=\sum_{i=1}^n\big((x^{-1})_{ii}\otimes x_{ii}-(x^{-1})_{ii}x_{ii}\otimes 1\big)+n\otimes 1,
\]
where $x_{ii}$ are the entries of the matrix of $x$ for the standard basis of $A^{\oplus n}$, and $x^{-1}$ denotes the inverse matrix.
\end{prop}

\begin{proof}
We construct the trace by employing a construction completely analogous to the one from Dennis and B\"okstedt-Hsiang-Madsen, as explained in \cite[\S2.6]{IbSurvey}.
Let $B^{di}\GL_n(A)$ be the dihedral bar construction of $\GL_n(A)$, defined as the geometric realisation of the dihedral nerve $N^{di}\GL_n(A)$, which is the cyclic nerve of $\GL_n(A)$ with the involution analogous to the one of $\THR(k)$ from \S \ref{prelim}. 
Its  $\Z/2$-fixed-points space is the two-sided bar construction of the action of $\GL_n(A)$ on $\GL_n(A)^{\Z/2}$, and we refer to \cite[\S 2.1]{DO} for the details.

We define the trace map from maps of $\Z/2$-spaces
\begin{equation}\label{formulatrace}
B^\sigma \GL_n(A)\stackrel{s}{\to} B^{di}\GL_n(A)\to B^{di}\M_n(A)\to \Omega^{\infty}(\THR(\M_n(A))^{C_r})\stackrel{m}{\simeq} \Omega^{\infty}(\THR(A)^{C_r}),
\end{equation}
for every $r\geq 1$, by taking the disjoint union over $n\geq 0$ and group-completing the source with respect to direct sums.
The maps in this composite are defined as follows.
The map $s$ is the canonical section, which is defined on an $n$-simplex $(g_1,\dots,g_n)$ by
\[
s(g_1,\dots,g_n)=((g_1\dots g_n)^{-1},g_1,\dots,g_n).
\]
The second map of (\ref{formulatrace}) is the inclusion of invertible matrices into the monoid of all $(n\times n)$-matrices $\M_n(A)$, again with the transposition of matrices and entrywise $w$ as involution. For the third map, we use that $B^{di}\M_n(A)$ is the geometric realisation of the dihedral nerve $N^{di}\M_n(A)$, and therefore has an action of the dihedral group $D_r$ of order $2r$, for every integer $r\geq 1$.
The realisation of the $r$-subdivision $\sd_r$ of \cite[\S1]{BHM} applied to the dihedral nerve $N^{di}\M_n(A)$ has a $\Z/2$-action, and its geometric realisation is $D_r$-equivariantly isomorphic to $B^{di}\M_n(A)$ (see \cite[\S1.2]{geomTCR} for a detailed discussion about subdivisions of dihedral objects). Thus, we obtain $\Z/2$-equivariant isomorphisms
\[
\xymatrix{
B^{di}\M_n(A)\ar[r]^-{\Delta_r}_-{\cong}
&
 |(\sd_r N^{di}\M_n(A))^{C_r}|\ar[r]^-{E_r}_-{\cong}
 & (B^{di}\M_n(A))^{C_r}
 }
\]
where the first map is induced by the diagonal map degreewise, and the second map is the canonical isomorphism $E_r\colon |\sd_rX|\to |X|$ for a dihedral set $X$, from \cite[Lemma 1.1]{BHM} (which is denoted by $D_r$ there). 
By denoting $\Delta^k$ the standard $k$-simplex space
\[
\Delta^k:=\{(t_0,\dots,t_k)\in\mathbb{R}^{k+1}\ | \ t_0+t_1+\dots+t_k=1,\ t_i\geq 0\ \mbox{for all $0\leq i\leq k$} \},
\]
the map $E_r$ sends the equivalence class of $(x;t)$, with $x\in (\sd_rX)_k=X_{r(k+1)-1}$ and $t\in \Delta^{k}$, to the class of $(x;\delta_r(t))$, where $\delta_r\colon \Delta^{k}\to  \Delta^{r(k+1)-1}$ sends $t$ to $(t,\dots,t)/r$ (with $r$-many components).
The third map of (\ref{formulatrace}) is then defined to be the adjoint of the composite of the maps of $\Z/2$-spectra
\[
\xymatrix@R=10pt@C=17pt{
\Sigma^{\infty}_+B^{di}\M_n(A)\ar[rr]^-{\Sigma^\infty_+(E_r\Delta_r)}&&\Sigma^{\infty}_+(B^{di}\M_n(A))^{C_r}\ar[r]& (\Sigma^{\infty}_+B^{di}\M_n(A))^{C_r}\ar[r]^-{\cong}&
 (B^{di}\Sigma^{\infty}_+\M_n(A))^{C_r}
 \ar[d]
 \\&&&
 \THR(\M_n(A))^{C_r}
 &
\ar[l]_-{=} (B^{di}\EM \M_n(A))^{C_r}
}
\]
which are respectively $\Sigma^\infty_+(E_r\Delta_r)$, the tom Dieck splitting, the monoidality of the equivariant suspension spectrum functor, and the Hurewicz map.
Finally, the last map of (\ref{formulatrace}) is induced by the inclusion of $A$ into $\M_n(A)$ as $(1,1)$-entry, and it is a $\Z/2$-equivalence by \cite[Theorem 4.9]{THRmodels} (which visibly restricts to $C_r$-fixed points).

A direct verification shows that the map of (\ref{formulatrace}) is compatible with the direct sum of matrices and with its symmetry isomorphism, and therefore by setting $r=p^{m-1}$ we obtain a map of $\Z/2$-spectra
\[
\tr^m\colon \KR(A)\longrightarrow \THR(A)^{C_{p^{m-1}}}=\TRR^{m}(A;p)
\]
for every integer $m\geq 1$ and prime $p$. To obtain a map to $\TCR(A;p)$ we need to show that the maps $\tr^m$ are compatible with the restriction and Frobenius $R,F\colon \TRR^{m+1}(A;p)\to \TRR^{m}(A;p)$. Unravelling the definitions, it is sufficient to provide $\Z/2$-equivariant homotopies between the composites from the bottom left to the bottom right $\Z/2$-spaces of the diagram
\[
\xymatrix@C=50pt{
&&(B^{di}\M_n(A))^{C_{p^{m}}}\ar@<-1ex>[d]_-{R}\ar@<1ex>[d]^-{F}
\\
B^{\sigma}\GL_n(A)\ar[r]_-s&B^{di}\M_n(A)\ar[ur]^-{E_{p^m}\Delta_{p^{m}}}\ar[r]_-{E_{p^{m-1}}\Delta_{p^{m-1}}}&(B^{di}\M_n(A))^{C_{p^{m-1}}}
}
\]
The vertical map $R$ is defined by identifying the $C_p$-fixed points of the $p^m$-fold subdivision with the $p^{m-1}$-fold subdivision, and then taking $C_{p^{m-1}}$-fixed points. It follows from \cite[(1.12)]{BHM} that the inner most triangle commutes strictly. The vertical map $F$ is the inclusion of fixed points. To see how the outer triangle commutes when restricted along $s$, we decompose the diagram as
\[
\xymatrix@R=12pt@C=17pt{
&&&|(\sd_{p^m}N^{di}\M_n(A))^{C_{p^{m}}}|\ar[rr]^-{E_{p^m}}\ar[d]&&(B^{di}\M_n(A))^{C_{p^{m}}}\ar[dd]^-{F}
\\
&&&|(\sd_{p^m}N^{di}\M_n(A))^{C_{p^{m-1}}}|\ar[d]^-{E_p}\ar[drr]^-{E_{p^m}}
\\
B^{\sigma}\GL_n(A)\ar[r]_-s&B^{di}\M_n(A)\ar@<.3ex>[uurr]^-{\Delta_{p^{m}}}\ar[rr]_-{\Delta_{p^{m-1}}}
&&|(\sd_{p^{m-1}}N^{di}\M_n(A))^{C_{p^{m-1}}}|\ar[rr]_-{E_{p^{m-1}}}&&(B^{di}\M_n(A))^{C_{p^{m-1}}}
}
\]
where the unlabelled map is the inclusion of fixed points. The lower right triangle commutes by \cite[(1.12)]{BHM}, and the square above it by naturality of the inclusion of fixed points. We then need to define a $\Z/2$-equivariant homotopy that makes the the triangle on the left commute when restricted along $s$. 
By factoring $\Delta_{p^{m}}=\Delta_{p^{m-1}}\circ\Delta_p$, it is sufficient to treat the case where $m=1$.
The homotopy provided in \cite[Proposition 2.5]{BHM} is not quite $\Z/2$-equivariant, but we can use a small variation of it. Let us define $h_k\colon \Delta^k\times [0,1]\to \Delta^{p(k+1)-1}$ for every $k\geq 0$, by
\[
h(t,s)=(st/p+(1-s)t,\dots,st/p,st/p)
\]
where the right-hand side has $p$ components.
If we apply the subdivision $\sd_e$ as in \cite[\S 1.2]{geomTCR} to make the $\Z/2$-actions on the spaces of the diagram simplicial,
the upper composite sends the equivalence class of $(g_1,\dots,g_{2k+1};t)$, with $(g_1,\dots,g_{2k+1})$ a $k$-simplex of $\sd_eN^{\sigma}\GL_n(A)$ and $t\in \Delta^k$, to the equivalence class of
\[
(\Delta_{p}((g_1\dots g_{2k+1})^{-1},g_1,\dots,g_{2k+1});\delta_p(t)).
\]
The lower composite is simply the functor $\sd_e$ applied to the section $s$.
Thus by sending the same equivalence class to the class of $(\Delta_{p}((g_1\dots g_{2k+1})^{-1},g_1,\dots,g_{2k+1});h_k(s,t))$, we obtain a $\Z/2$-equivariant homotopy from the upper composite to
\begin{align*}
[\Delta_{p}((g_1\dots g_{2k+1})^{-1},g_1,\dots,g_{2k+1});t,0,\dots,0]&=[d_l^{(k+1)(p-1)}\Delta_{p}((g_1\dots g_{2k+1})^{-1},g_1,\dots,g_{2k+1});t],
\end{align*}
where each $0$ on the left is the zero vertex of $\Delta^k$, and $d_l$ is the last face map of $\sd_eN^{di}\M_n(A)$.
Since this last face map multiplies the central three components $a_q,a_{q+1}$ and $a_{q+2}$ of a $q$-simplex $(a_0,\dots,a_{2q+1})$ of $\sd_eN^{di}\M_n(A)$, we find that, by denoting $g_0:=(g_1\dots g_{2k+1})^{-1}$,
\begin{align*}
d_l^{(k+1)(p-1)}\Delta_{p}(g_0,\dots,g_{2k+1})&=(g_0,
\dots,g_{k},(g_{k+1}\dots g_{2k+1}(g_0\dots g_{2k+1})^{p-2}g_0\dots g_{k+1}),
g_{k+2},\dots,g_{2k+1}).
\end{align*}
The middle entry is equal to $g_{k+1}$ since $g_0=(g_1\dots g_{2k+1})^{-1}$, and it follows that the end of the homotopy is indeed the subdivision of $s$. We can therefore lift $\tr^m$ along $R$ and $F$ to obtain a map of $\Z/2$-spectra $\tr\colon \KR(A)\to\TCR(A;p)$, for every prime $p$.

Let us now identify the effect of the trace in $\pi_0$ of the fixed points. By construction, if we compose $\tr$ with the map $R$ all the way to $\THR(A)$ we recover the map $\tr^0\colon \KR(A)\to \THR(A)$. Thus for this calculation, we need to describe the map (\ref{formulatrace}) for $r=1$. On fixed points, the map $s$ is the map of bar constructions
\[
B(\GL_{n}(A);\GL_n(A)^{\Z/2})
\stackrel{s}{\longrightarrow}
B(\GL_n(A)^{\Z/2};\GL_{n}(A);\GL_n(A)^{\Z/2})
\]
which sends a $k$-simplex $(g_1,\dots,g_k,x)$, with $g_i\in \GL_n(A)$ and $x\in \GL_n(A)^{\Z/2}$, to
\[
s(g_1,\dots,g_k,x)=((g_1\dots g_kxg_k^\ast\dots g^\ast_1)^{-1},g_1,\dots,g_k,x),
\]
where $(-)^\ast$ denotes the involution on $\GL_n(A)$. Thus, after applying $\pi_0$ and identifying the components of $\THR$ using \cite[Theorem 5.1]{THRmodels}, the map of (\ref{formulatrace}) becomes a map
\[
\GL_n(A)^{\Z/2}/_\sim\stackrel{s}{\to} (\GL_n(A)^{\Z/2}\times\GL_n(A)^{\Z/2})/_\sim\to (\M_n(A)^{\Z/2}\otimes \M_n(A)^{\Z/2})/T\stackrel{m}{\cong} (A^{\Z/2}\otimes A^{\Z/2})/T,
\]
where the quotients on the two sets on the left are for the respective actions of $\GL_n(A)$.
By the calculation of $s$ above, this map sends the isomorphism class of a form of rank $n$, represented by a matrix $x\in \GL_n(A)^{\Z/2}$, to $m(x^{-1}\otimes x)$.
For the proof of Theorem \ref{thm:MilnorTCR} we were only interested in rank $1$ forms (since these generate the Witt group of a field), and since for $n=1$ the map $m$ is the identity, we immediately find that the class of a symmetric form determined by a unit $a$ of $A$ fixed by the involution is sent to $a^{-1}\otimes a$.

For larger values of $n$ we need to determine the isomorphism
\[
m\colon (\M_n(A)^{\Z/2}\otimes \M_n(A)^{\Z/2})/T\stackrel{\cong}{\longrightarrow} (A^{\Z/2}\otimes A^{\Z/2})/T.
\]
Let us decompose a symmetric matrix $M\in \M_n(A)^{\Z/2}$ as $M=\sum_{i=1}^nM_{ii}e_{ii}+\sum_{1\leq i<j\leq n}(M_{ij}e_{ij}+w(M_{ij})e_{ji})$ where $e_{ij}$ is the canonical basis element with $1$ in the entry $(i,j)$ and with all the other entries equal to zero. By regarding the abelian group with involution $\M_n(A)$ as a Mackey functor, we can then write the fixed point $M$ as
\[
M=\sum_{i=1}^nM_{ii}e_{ii}+\sum_{1\leq i<j\leq n}\tran(M_{ij}e_{ij})
\]
where $\tran$ denotes the transfer map of the Mackey functor, which sends a matrix $N$ to $N+N^\ast$. By applying the same decomposition to a second fixed point $M'\in \M_n(A)^{\Z/2}$ we find that
\begin{align*}
M'\otimes M&=\sum_{l,i=1}^nM'_{ll}e_{ll}\otimes M_{ii}e_{ii}+\sum_{l=1}^n\sum_{1\leq i<j\leq n}M'_{ll}e_{ll}\otimes\tran(M_{ij}e_{ij})
\\&+\sum_{1\leq l<k\leq n}\sum_{i=1}^n\tran(M'_{lk}e_{lk})\otimes M_{ii}e_{ii}+\sum_{1\leq l<k\leq n}\sum_{1\leq i<j\leq n}^n\tran(M'_{lk}e_{lk})\otimes\tran(M_{ij}e_{ij}).
\end{align*}
By the relation (ii) of \cite[Theorem 5.1]{THRmodels} defining the subgroup $T$, this is equivalent to
\begin{align*}
M'\otimes M&=\sum_{l,i=1}^nM'_{ll}e_{ll}\otimes M_{ii}e_{ii}+\sum_{l=1}^n\sum_{1\leq i<j\leq n}\tran(M'_{ll}e_{ll} M_{ij}e_{ij})
\otimes 1
\\
&+\sum_{1\leq l<k\leq n}\sum_{i=1}^n1\otimes \tran(M'_{lk}e_{lk}M_{ii}e_{ii})+\sum_{1\leq l<k\leq n}\sum_{1\leq i<j\leq n}\tran(M'_{lk}e_{lk}(M_{ij}e_{ij}+w(M_{ij})e_{ji}))\otimes 1
\\
&=\sum_{l,i=1}^nM'_{ll}e_{ll}\otimes M_{ii}e_{ii}+\sum_{1\leq i<j\leq n}\tran(M'_{ii}M_{ij}e_{ij})\otimes 1+\sum_{1\leq l<k\leq n}1\otimes \tran(M'_{lk}M_{kk}e_{lk})
\\
&+\sum_{1\leq l<k<j\leq n}\tran(M'_{lk}M_{kj}e_{lj})\otimes 1
+\sum_{1\leq l,i<k\leq n}\tran(M'_{lk}w(M_{ik})e_{li})\otimes 1
\end{align*}
where the last equality follows from carrying out the matrix multiplication on the canonical basis. Again by \cite[Theorem 5.1]{THRmodels}, the transfer map of the Mackey functor $\underline{\pi}_0 \THR(\M_n(A))$ sends the equivalence class of a matrix $M$ in $\pi_0\THH(\M_n(A))\cong \M_n(A)/[\M_n(A),\M_n(A)]$, to $1\otimes \tran(M)=\tran(M)\otimes 1$ in $(\M_n(A)^{\Z/2}\otimes \M_n(A)^{\Z/2})/T$. 
The map $m$ from \cite[Theorem 4.9]{THRmodels} is a map of $\Z/2$-spectra, and therefore it commutes with the transfer. Moreover, since the underlying map of spectra is the trace map of \cite{BHM}, in $\pi_0$ it sends a matrix to its trace, and therefore the terms involving $e_{ij}$ vanish for $i\neq j$. We then find that
\begin{align*}
m(M'\otimes M)
&=\sum_{l,i=1}^nm(M'_{ll}e_{ll}\otimes M_{ii}e_{ii})+\sum_{1\leq l<k\leq n}\tran(M'_{lk}w(M_{lk}))\otimes 1
\end{align*}
in $(A^{\Z/2}\otimes A^{\Z/2})/T$. By the definition of $m$ of \cite[Proof of Theorem 4.9]{THRmodels}, it sends $e_{ij}\otimes e_{lk}$ to $1$ if $j=l$ and $k=i$, and to zero otherwise. Thus
\begin{align*}
m(M'\otimes M)
&=\sum_{i=1}^nM'_{ii}\otimes M_{ii}+\sum_{1\leq l<k\leq n}\tran(M'_{lk}w(M_{lk}))\otimes 1.
\end{align*}
Since $M$ is symmetric, i.e. $w(M_{lk})=M_{kl}$, we may write this expression as
\begin{align*}
m(M'\otimes M)
&=\sum_{i=1}^nM'_{ii}\otimes M_{ii}+\tr(M'M)\otimes 1-\sum_{i=1}^nM'_{ii}M_{ii}\otimes 1,
\end{align*}
where $\tr$ denotes the usual trace of a matrix. When $M'\otimes M=x^{-1}\otimes x$ for some $x\in \GL_n(A)^{\Z/2}$, this gives the formula we wanted.
\end{proof}

\bibliographystyle{amsalpha}
\bibliography{bib}

\end{document}